\documentclass[12pt,reqno]{amsart}

\usepackage[margin=1in]{geometry}
\usepackage[fleqn,tbtags]{mathtools}
\usepackage[shortlabels]{enumitem}
\usepackage{graphicx}
\usepackage{amssymb}
\usepackage{amsmath}
\usepackage{amsthm}
\IfFileExists{mlmodern.sty}{\usepackage{mlmodern}}{\usepackage{lmodern}}
\usepackage{eucal}
\usepackage{microtype}
\usepackage[nodisplayskipstretch]{setspace}
\usepackage{tikz}
\usepackage{tikz-cd}
\usetikzlibrary{arrows,positioning,calc}
\usepackage{xcolor}
\usepackage[colorlinks=true,linkcolor=red,citecolor=blue,urlcolor=red,
hypertexnames=false]{hyperref}
\usepackage[nameinlink, capitalize, noabbrev]{cleveref}

\numberwithin{equation}{section}
\newtheorem{theorem}{Theorem}[section]
\newtheorem{proposition}[theorem]{Proposition}
\newtheorem{lemma}[theorem]{Lemma}

\newtheorem{conjecture}[theorem]{Conjecture}
\newtheorem{definition}[theorem]{Definition}
\theoremstyle{remark}
\newtheorem{remark}[theorem]{Remark}
\newcommand{\ddc}{dd^c}
\newcommand{\PSH}{\operatorname{PSH}}

\newcommand{\vol}{\operatorname{vol}}
\newcommand{\BC}{\mathrm{BC}}
\newcommand{\PH}{\mathcal{PH}}
\newcommand{\OO}{\mathcal O}
\newcommand{\RR}{\mathbb R}
\newcommand{\CC}{\mathbb C}
\newcommand{\np}[1]{\left\langle #1\right\rangle}
\newcommand{\norm}[1]{\left\lVert #1\right\rVert}

\title[Plurigeneras of Kähler families]{Psh variation of singular Kähler-Einstein metrics \\ and \\ Invariance of plurigenera for smooth K\"ahler families}

\author[Y. Li]{Yinji Li}
\address{Yinji Li: Institute of Mathematics\\Academy of Mathematics and Systems Science\\Chinese Academy of
	Sciences\\Beijing\\100190\\P. R. China}
\email{liyinji@amss.ac.cn}
\author[M. P\u aun]{Mihai P\u aun}
\address{Mihai P\u aun: Institut f\"ur Mathematik, Universit\"at Bayreuth, 95440 Bayreuth, Germany}
\email{mihai.paun@uni-bayreuth.de}
\author[Z. Wang]{Zhiwei Wang}
\address{Zhiwei Wang: Laboratory of Mathematics and Complex Systems (Ministry of Education)\\ School of Mathematical Sciences\\ Beijing Normal University\\ Beijing 100875\\ P. R. China}
\email{zhiwei@bnu.edu.cn}

\author[X. Zhou]{Xiangyu Zhou}
\address{Xiangyu Zhou: Institute of Mathematics\\ Academy of Mathematics and Systems Science\\
	and Hua Loo--Keng Key Laboratory of Mathematics\\ Chinese Academy of Sciences\\ Beijing 100190\\ P. R. China}
\email{xyzhou@math.ac.cn}

\subjclass[2020]{32G05, 32Q15, 32W05, 32W20}

\keywords{invariance of plurigenera, $L^2$ extension theorem, twisted KE metric, K\"ahler MMP}

\begin{document}
	
	\begin{abstract}
		Let $p:X\rightarrow\Delta$ be a proper holomorphic submersion whose total space $X$ is a 
		K\"ahler manifold. We prove that $\dim_{\mathbb C}H^0(X_t,mK_{X_t})$ is independent of
		$t$ for every integer $m\geq1$. This was conjectured by Y.-T. Siu in 2002.
	\end{abstract}
	
	\maketitle
	
	\tableofcontents
	
	\section{Introduction}\label{sec:intro}
	
	This paper proves the deformation invariance of plurigenera for smooth
	K\"ahler families. Let $p:X\to\Delta$ be a proper holomorphic submersion
	onto the unit disk, and assume that the total space $X$ is K\"ahler.
	Siu proposed the following conjecture.
	\begin{conjecture}[{\cite[Conjecture 2.1]{Siu022}}]\label{conj: Siu}
		For every integer $m\geq1$, the number
		\[
		P_m(X_t):=\dim_{\mathbb C}H^0(X_t,mK_{X_t})
		\]
		is independent of $t\in\Delta$.
	\end{conjecture}
	
	Siu first proved invariance for smooth projective families of manifolds
	of general type \cite{Siu98}; Kawamata subsequently gave an algebraic
	proof \cite{Kaw99}. Siu established the general projective case in
	\cite{Siu021}, and P\u aun later gave a one-tower proof \cite{Pau07}.
	Takayama extended the result to algebraic families over a smooth curve
	with canonical singularities \cite[Theorem 1.1]{Tak07}. These extension
	results have played an important role in the development of modern
	birational geometry.
	
	In the K\"ahler setting, the first results are due to Levine
	\cite{L83,L85}. His 1983 theorem assumes the existence of a nonzero
	pluricanonical section with smooth divisor; the 1985 theorem allows
	certain mild singularities of the associated cyclic cover. Cao--P\u aun
	developed this approach in \cite{CP20}, establishing an infinitesimal
	extension theorem that we briefly recall.
	For integers $m>1$ and $k\geq1$, set
	\[
	\mathcal F_k=mK_X\otimes
	\bigl(\mathcal O_X/t^{k+1}\mathcal O_X\bigr),
	\]
	where $t$ also denotes the pullback of the standard coordinate on
	$\Delta$. Let $s\in H^0(X,\mathcal F_k)$, and let $s_0$ be its
	restriction to $X_0$. Write $h_L=e^{-\varphi_L}$ for the singular
	Hermitian metric on $L_0=(m-1)K_{X_0}$ induced by $s_0$.
	Suppose that $s$ has a smooth representative $\widetilde s_k$ near
	$X_0$ such that
	\[
	\bar\partial\widetilde s_k=t^{k+1}\Lambda_k,
	\qquad
	\gamma_k=\left.\frac{\Lambda_k}{dt}\right|_{X_0},
	\]
	and
	\begin{equation}\label{eq:integrability}
		\int_{X_0}|\gamma_k|_\omega^2
		e^{-(1-\varepsilon)\varphi_L-\varepsilon\phi_L}
		\,dV_\omega<\infty
		\qquad\text{for every }0<\varepsilon<1.
	\end{equation}
	Here $\omega$ is a K\"ahler metric on $X_0$, $e^{-\phi_L}$ is a smooth
	reference metric on $L_0$, and division by $dt$ identifies the
	restriction of $\Lambda_k$ with an $L_0$-valued $(n,1)$-form on $X_0$.
	Then there exists $\widehat s\in H^0(X,\mathcal F_{k+1})$ such that
	\[
	\pi_k(\widehat s)=s,
	\qquad \pi_k:\mathcal F_{k+1}\longrightarrow\mathcal F_k.
	\]
	Thus, under \eqref{eq:integrability}, $s$ extends from the $k$th
	infinitesimal neighborhood of $X_0$ to the $(k+1)$st. This is an
	infinitesimal version of the Ohsawa--Takegoshi extension theorem.
	
	More recently, P\u aun \cite{PC} and Deng--Ning--Wang--Zhou
	\cite{DNWZ26} proved Conjecture~\ref{conj: Siu} under the assumption
	that $K_{X_t}$ is nef for every $t\in\Delta$. The present work builds
	on the metric construction used in the nef case and removes this
	additional assumption.\\
	
	Our main result is the following.
	\begin{theorem}[=Theorem~\ref{thm: inv-Kah}]\label{thm:plurigenera}
		For every integer $m\geq1$, every $t_0\in\Delta$, and
		every $s\in H^0(X_{t_0},mK_{X_{t_0}})$, there exists
		$\widetilde s\in H^0(X,mK_X)$ whose restriction to $X_{t_0}$, under the
		identification induced by the standard coordinate on $\Delta$, is $s$.
		Consequently, Conjecture~\ref{conj: Siu} holds.
	\end{theorem}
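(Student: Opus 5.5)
The plan is to reduce, after shrinking $\Delta$ and translating, to $t_0=0$ and to the infinitesimal extension theorem of Cao--P\u aun recalled above. Given $s\in H^0(X_0,mK_{X_0})$ with $m>1$ (for $m=1$, $h^0(K_{X_t})=h^{n,0}$ is constant by Hodge theory for the Kähler fibres, so every section extends), we build inductively $s^{(k)}\in H^0(X,\mathcal F_k)$ with $\pi_k(s^{(k+1)})=s^{(k)}$. The formal limit then lies in the completion of $p_*(mK_X)$ at $0$. By Grauert's coherence and Krull's theorem this gives a genuine section $\widetilde s\in H^0(X,mK_X)$, possibly after shrinking $\Delta$, whose restriction is $s$. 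Upper semicontinuity of $t\mapsto P_m(X_t)$ combined with surjectivity of restriction at every $t_0$ then yields constancy, hence Conjecture~\ref{conj: Siu}.

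The whole difficulty is verifying the integrability hypothesis \eqref{eq:integrability} at every step. Here $\varphi_L$ is the possibly very singular weight $\frac{m-1}{m}\log|s_0|^2$. The estimate can fail if $\gamma_k$ is arbitrary, so I would not use $s_0$ directly. Instead I would dominate $e^{-\varphi_L}$ by a better metric on $(m-1)K_{X_0}$ coming from a family of twisted (singular) Kähler--Einstein metrics, following the construction in the nef case.

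Concretely, the key steps are as follows. First, fix a relative Kähler form $\omega$ from the total space and $\varepsilon>0$. Second, solve on each fibre the twisted KE equation $\mathrm{Ric}(\omega_{t,\varepsilon})=-\omega_{t,\varepsilon}+\varepsilon\omega|_{X_t}$, or a degenerate version adapted to the pluricanonical section, i.e.\ a Monge--Ampère equation $(\varepsilon\omega+dd^c u_t)^n=e^{u_t}|s|^{2/m}$-type weights. This produces fibrewise metrics $e^{-\psi_{t,\varepsilon}}$ on $K_{X_t}+\varepsilon\{\omega\}$ which are well-defined without nefness. Third, prove that the resulting weight $\psi_\varepsilon$ on $K_{X/\Delta}$ is psh in all variables; this is the ``psh variation'' of the title, to be done via a Schumacher/P\u aun-type computation of $\Delta$ of the geodesic-curvature term in the smooth approximations, followed by passage to the singular limit. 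Fourth, apply Ohsawa--Takegoshi with weight $(m-1)\psi_\varepsilon$ inductively. Fibrewise extremality of the KE potential ensures $|s_0|^{2(m-1)/m}\lesssim e^{(m-1)\psi_{0,\varepsilon}}$, so the error forms $\gamma_k$ obtained at each step, produced by $L^2$ estimates relative to $\psi_\varepsilon$, satisfy \eqref{eq:integrability} with the interpolation between $\varphi_L$ and the smooth $\phi_L$ controlled by $\varepsilon$.

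The main obstacle is the third step: plurisubharmonicity of the singular twisted KE potentials in the horizontal direction when $K_{X_t}$ is not nef. In that case the Monge--Ampère solutions degenerate as $\varepsilon\to0$, and one must obtain uniform estimates in $\varepsilon$ and $t$. I would first attempt it with smooth twisted potentials for fixed $\varepsilon$, where the maximum principle on the geodesic curvature gives positivity. Then I would let $\varepsilon\to0$ using decreasing limits and Kähler MMP-free envelope descriptions: the limit weight should equal the fibrewise supremum envelope of $\frac1m\log|\sigma|^2$ over pluricanonical sections, whose psh variation is Berndtsson--P\u aun-like, giving the required bound independently of nefness.
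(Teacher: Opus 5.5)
\textbf{The gap is in your third step.} Your plan depends on the plurisubharmonic variation of the twisted K\"ahler--Einstein potentials when $K_{X_t}$ is not nef, and you give no working mechanism for it. That step is essentially the whole content of the theorem.

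\emph{Why the nef-case argument breaks.} If $K_{X_t}$ is not nef, then for small $\varepsilon$ the class $c_1(K_{X_t})+\varepsilon[\omega_t]$ is at best big, not K\"ahler. Even bigness requires pseudoeffectivity of every $K_{X_t}$, from Cao--P\u aun or Hacon--Xie, which you do not invoke; your variant with density $|s|^{2/m}$ also makes no sense off $X_0$. So there are no smooth twisted potentials at fixed small $\varepsilon$. The solutions of $\langle(\theta_t+\varepsilon\omega_t+\ddc\varphi_t)^n\rangle=e^{\varphi_t}\omega_t^n$ only have minimal singularities. Consequently \eqref{eq:nef-curvature-identity} is unavailable, since it differentiates in $t$ and applies the maximum principle to a smooth nondegenerate fibre metric. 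Any smoothing changes the equation and produces error terms that you never control.

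\emph{What the paper uses instead.} Hacon--Xie's K\"ahler MMP gives a model $Z$ with $\mu^*[\alpha]=[P]+[E_\alpha]$, where $P=f^*\gamma$ is semipositive, together with the orthogonality $\int_{D_{i,t}}P_t^{n-1}=0$. A Perron envelope solving the homogeneous Monge--Amp\`ere equation is then compared with the regularized fibrewise solutions through an entropy bound. That bound comes from Berndtsson's positivity of local weighted Bergman kernels and the Berman--Berndtsson convergence. The divisorial error is $O(\sum_i d_im_{i,\varepsilon})$, which tends to $0$ by orthogonality.

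\emph{Two further missing inputs.} First, you need constancy in $t$ of $\vol(c_1(K_{X_t})+\varepsilon[\omega_t])$, which is not cohomological for big classes and also comes from the MMP. Without it, $\varphi_\varepsilon-\log(v_\varepsilon/W)$ is not a shift by one constant on the total space, and the limit $\varepsilon\to0$ can degenerate. Second, you need the volume-independent bound $u_{\varepsilon,t}\ge V_{\theta_t}-C_t$ (Di Nezza--Guedj--Guenancia, Guedj--Lu), which is what gives minimal singularities in the limit.

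\emph{Your fallback is circular.} You propose identifying the limit with the fibrewise envelope of $\frac1m\log|\sigma|^2$ over pluricanonical sections, positively curved in the Berndtsson--P\u aun sense. But that weight is only known to dominate $\frac1m\log|\sigma|^2$ on a special fibre for sections $\sigma$ that already extend to nearby fibres, which is what you are trying to prove.

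\textbf{Your fourth step is also not coherent.} For $\varepsilon>0$, $(m-1)\psi_\varepsilon$ is a weight on $(m-1)K_X$ whose curvature is only bounded below by $-(m-1)\varepsilon\omega$, with $[\omega]$ transcendental. The $L^2$ extension theorem does not apply to it directly. The $\varepsilon$ in \eqref{eq:integrability} is unrelated to this twist.

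Moreover, $|s_0|^{2(m-1)/m}\lesssim e^{(m-1)\psi_0}$ means $e^{-(m-1)\psi_0}\lesssim e^{-\varphi_L}$. So $L^2$ control of $\gamma_k$ against $e^{-(m-1)\psi}$ does not give \eqref{eq:integrability}, which is taken against the more singular weight $\varphi_L$.

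The right use of your inequality is the paper's. First let $\varepsilon\to0$ to obtain $\Psi\in\PSH(X,\theta)$ with $\Psi_0$ of minimal singularities. Then $|s|^2e^{-(m-1)\Psi_0}\lesssim|s|^{2/m}$ is integrable, and one application of the Zhou--Zhu $L^2$ extension theorem produces $\widetilde s$. The infinitesimal induction, the comparison theorem and Krull are then unnecessary. Your formal-to-actual reduction and your Hodge-theoretic treatment of $m=1$ are correct in themselves.
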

	
	The proof combines four ingredients:
	\begin{itemize}
		\item General $L^2$ extension theorems for weakly pseudoconvex
		K\"ahler manifolds and singular metrics \cite{ZZ17,ZZ20}, based on the solution of Demailly's strong openness conjecture
		\cite{GZ15}.
		\item The recent progress in the K\"ahler minimal model program
		(MMP) due to Hacon--Xie \cite{HX}, which provides a birational
		description of the big adjoint classes and constancy of their
		fiberwise volumes.
		\item Uniform estimates for degenerate complex Monge--Amp\`ere
		equations in families, including classes whose volumes may collapse,
		due to Di Nezza--Guedj--Guenancia \cite{DNGG} and Guedj--Lu
		\cite{GL25}, and the $L^\infty$ estimates of Li--Wang--Zhou
		\cite{LWZ}.
		\item An entropy comparison based on local Bergman kernels. This is 
		the most important part of the proof, extending the positivity
		result of \cite{Pau17} for the fiberwise twisted
		K\"ahler--Einstein metrics in big classes. One of the important ideas here is 
		taken from the proof of the convexity of Mabuchi energy of 
		Berman-Berndtsson, see \cite{BB}. 
	\end{itemize}
	
	We now explain how these ingredients fit together. Fix a nonzero
	section $s\in H^0(X_0,mK_{X_0})$. The $L^2$ extension theorem reduces
	the problem to constructing a singular Hermitian metric $h$ on
	$K_{X/\Delta}$ with semipositive curvature such that $s$ is locally
	square integrable with respect to $h^{\otimes(m-1)}|_{X_0}$. The
	standard coordinate on $\Delta$ identifies $K_{X/\Delta}$ with
	$K_X$. As in \cite{PC,DNWZ26}, we construct a metric independent of
	$s$ whose restriction to the central fiber has minimal singularities.
	
	Choose K\"ahler forms $\omega$ and $\beta$ on $X$, and let
	$h_\omega$ be the induced metric on $K_{X/\Delta}$, with curvature
	$\theta=\theta_\omega$. We seek, after shrinking the disk, a function
	$\Phi\in\PSH(X,\theta)$ such that $\Phi|_{X_0}$ has minimal
	singularities. The resulting metric is $h=h_\omega e^{-\Phi}$.
	Since $s\neq0$, the canonical bundle $K_{X_0}$ is pseudoeffective.
	The result of Cao--P\u aun \cite{CP25} then implies that $K_{X_t}$
	is pseudoeffective for every $t\in\Delta$. Hence the classes
	\[
	[\alpha_{\eta,t}]:=[\theta_t+\eta\beta_t]
	\]
	are big for every $\eta>0$. On each fiber we solve
	\[
	\np{(\theta_t+\eta\beta_t+\ddc\phi_{\eta,t})^n}
	=e^{\phi_{\eta,t}}\beta_t^n,
	\]
	where the brackets denote the non-pluripolar product. Our convention is
	$d^c=\frac{i}{4\pi}(\bar\partial-\partial)$, so that
	$\ddc=\frac{i}{2\pi}\partial\bar\partial$.
	The central analytic task is to prove positivity on the total space
	for the function $\phi_\eta(x)=\phi_{\eta,p(x)}(x)$.
	In contrast to the nef case, these potentials may be singular, and
	their fiberwise regularity does not justify differentiating the
	equation in the base direction.
	
	The K\"ahler MMP supplies the geometric reduction needed to address
	this difficulty. For each fixed $\eta>0$, it gives constancy of
	$\vol([\alpha_{\eta,t}])$ on $\Delta$, together with a proper
	bimeromorphic morphism $\mu:Z_\eta\to X$ and a semipositive form
	$P_\eta$ on $Z_\eta$. Away from a finite set of parameters, the
	induced morphisms $\mu_t:Z_{\eta,t}\to X_t$ satisfy
	\[
	[\mu_t^*\alpha_{\eta,t}]=[P_{\eta,t}]+[E_{\eta,t}].
	\]
	Here $P_{\eta,t}$ is semipositive and big, and the two terms on the
	right are the positive and negative parts of the divisorial Zariski
	decomposition in the sense of \cite{Boucksom}. In particular,
	\[
	\vol([\alpha_{\eta,t}])
	=\vol([\mu_t^*\alpha_{\eta,t}])
	=\vol([P_{\eta,t}]).
	\]
	This reduction does not by itself establish positivity of the
	fiberwise solutions on the total space. Moreover, the model and its
	decomposition depend on $\eta$, whereas the canonical metric must
	ultimately be obtained by letting $\eta\to0$.
	
	To prove the required variation, we use an entropy argument inspired
	by Berman--Berndtsson's proof of convexity of the Mabuchi functional
	\cite{BB}. In their treatment of the entropy term, local weighted
	Bergman kernels approximate Monge--Amp\`ere densities. Positivity
	of these kernels, combined with the homogeneous Monge--Amp\`ere
	equation, yields the relevant curvature inequality. We apply this
	mechanism to a Perron envelope whose boundary values are the
	fiberwise twisted K\"ahler--Einstein potentials.
	
	Fix $\eta$ and work on the bimeromorphic model. At the regularized
	stage, let $P$ denote the regularized positive form and $U$ the Perron
	envelope. Set
	\[
	T=P+\ddc U\geq0,\qquad T^{n+1}=0.
	\]
	Writing the regularized fiberwise equation as
	$(P_t+\ddc u_t)^n=e^{u_t}f_t\eta_t^n$, where $\eta_t$ is a reference
	K\"ahler form and $f_t$ the corresponding density, we compare
	\[
	\mu_t=T_t^n,\qquad \lambda_t=e^{U_t}f_t\eta_t^n.
	\]
	The desired inequality is $\mu_t\leq\lambda_t$. It is equivalent
	to the vanishing of the nonnegative entropy expression
	\begin{equation}\label{eq:positiveentropy}
		H(t)=\int_{Z_{\eta,t}}\log^+\!\left(
		\frac{d\mu_t}{d\lambda_t}\right)d\mu_t,
	\end{equation}
	where $\log^+a=\max\{\log a,0\}$. Regularization makes the density
	ratio well defined and allows us to pass the resulting measure
	inequalities to the limit.
	
	Local Bergman kernels approximate the logarithmic density ratio and
	give inequality for its positive part in the sense of currents. The divisorial
	terms on the model introduce an error. Integration against the Green
	function of the base disk bounds $H(t)$ by a boundary contribution
	and a divisorial contribution. The first tends to zero because the
	boundary values solve the regularized fiberwise equation. The second
	is controlled by intersection numbers that tend to zero with the
	regularization parameter, using the orthogonality property of the
	Fujita decomposition: \emph{imparable!}
	
	In the limit, the Perron envelope satisfies the required
	Monge--Amp\`ere measure inequality. The comparison principle then
	shows that it dominates the fiberwise solution. Since the envelope
	and the solutions agree on the boundary, its submean inequality
	passes to the solutions. This proves their plurisubharmonic
	variation without differentiating them in the base direction.
	
	The construction of the canonical metric now follows the strategy
	of the nef case. The Guedj--Lu estimate compares each potential,
	after subtraction of its fiberwise supremum, with the extremal
	envelope in $c_1(K_{X_t})+\eta[\beta_t]$. The comparison constant is
	uniform in $t$ and $\eta$, even when the class volume tends to zero.
	To preserve total-space positivity when taking a limit, we normalize
	instead by the single constant
	\[
	\log\!\left(\frac{v_\eta}{W}\right),\qquad
	v_\eta=\vol([\alpha_{\eta,t}]),\quad
	W=\int_{X_t}\beta_t^n.
	\]
	Both $v_\eta$ and $W$ are independent of $t$. The normalized
	potentials have uniformly bounded fiberwise suprema, so the envelope
	comparison survives passage to a quasi-psh limit $\Phi$. Its
	restriction to $X_0$ has minimal singularities in $c_1(K_{X_0})$.
	Consequently, for $s\in H^0(X_0,mK_{X_0})$,
	\[
	|s|^2h_\omega^{m-1}e^{-(m-1)\Phi}
	\leq C|s|^{2/m},
	\]
	up to smooth reference factors. The right-hand side is integrable,
	and the $L^2$ extension theorem \cite{ZZ17,ZZ20} applies.
	
	The variation result used in this argument is of independent interest.
	\begin{theorem}\label{thm:variation}
		Let $p:X\to\Delta$ be as above, and let $\omega$ and $\beta$ be
		K\"ahler forms on $X$. Set
		\[
		\alpha=\theta_\omega+\beta,
		\]
		where $\theta_\omega$ is the curvature of the metric on
		$K_{X/\Delta}$ induced by $\omega$. Assume that $[\alpha_t]$ is
		big for every $t\in\Delta$. Then
		$t\mapsto\vol([\alpha_t])$ is constant. Let
		$\varphi_t\in\PSH(X_t,\alpha_t)$ be the unique solution with full
		Monge--Amp\`ere mass of
		\begin{equation}\label{eq:original}
			\np{(\alpha_t+\ddc\varphi_t)^n}
			=e^{\varphi_t}\beta_t^n.
		\end{equation}
		The function $\varphi(x)=\varphi_{p(x)}(x)$ then satisfies
		\[
		\varphi\in\PSH(X,\alpha).
		\]
	\end{theorem}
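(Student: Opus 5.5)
We will follow the strategy sketched in the introduction, working first on a bimeromorphic model supplied by the Kähler MMP and then descending to $X$. The plan has four stages: MMP reduction, regularization and Perron envelope, the entropy vanishing, and the limiting and descent argument.

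\emph{Step 1: MMP reduction and constancy of the volume.} Since $[\alpha_t]=c_1(K_{X_t})+[\beta_t]$ is a big adjoint class on every fiber, we invoke Hacon--Xie \cite{HX}. This gives constancy of $t\mapsto\vol([\alpha_t])$, which proves the first assertion. It also gives a proper bimeromorphic morphism $\mu:Z\to X$ and a semipositive closed form $P$ on $Z$. Away from a finite set $S\subset\Delta$, the restriction $\mu_t$ yields the divisorial Zariski decomposition $[\mu_t^*\alpha_t]=[P_t]+[E_t]$ with $P_t$ big and semipositive. The solution $\varphi_t$ then pulls back as $\mu_t^*\varphi_t=u_t+(\text{log of the divisor } E_t)$, where $u_t$ is the full-mass solution of $\np{(P_t+\ddc u_t)^n}=e^{u_t}f_t\eta_t^n$ on $Z_t$. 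Here $f_t$ absorbs the Jacobian of $\mu_t$ and the divisorial weight. Since plurisubharmonicity is local and $S$ is finite, it suffices to prove that $\varphi$ is $\alpha$-psh over $\Delta\setminus S$ with a uniform upper bound. Riemann extension for quasi-psh functions then handles $S$; for the upper bound we use the uniform $L^\infty$/Guedj--Lu estimates \cite{GL25,LWZ,DNGG}.

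\emph{Step 2: Regularization and Perron envelope.} Fix a small disk $D\Subset\Delta\setminus S$. We replace $P$ by $P+\delta\eta$ and smooth the density $f_t$, so that the fiberwise solutions $u^\delta_t$ are smooth and strictly $P_t+\delta\eta_t$-psh. Let $U$ be the Perron envelope on $p^{-1}(\overline D)$ of the $(P+\delta\eta)$-psh functions lying below $u^\delta_t$ on $p^{-1}(\partial D)$. By Bedford--Taylor type arguments, $T=P+\delta\eta+\ddc U\ge 0$ satisfies $T^{n+1}=0$ and $U|_{\partial}=u^\delta$. We aim to show $\mu_t:=T_t^n\le\lambda_t:=e^{U_t}f_t\eta_t^n$ for every $t\in D$. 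Granting this, the fiberwise comparison principle gives $U_t\ge u^\delta_t$. Since $U$ is psh in the total space and equals $u^\delta$ on the boundary, the submean inequality transfers to $u^\delta$ over every disk. This gives plurisubharmonicity of the regularized family, and letting $\delta\to0$ (monotone/stability of full-mass solutions) gives it for $u$ and hence for $\varphi$.

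\emph{Step 3 (main obstacle): vanishing of the entropy $H(t)$ from \eqref{eq:positiveentropy}.} Following Berman--Berndtsson \cite{BB}, we approximate $\log(d\mu_t/d\lambda_t)$ by $\frac1k\log$ of local weighted Bergman kernels for $kU$ against $\lambda_t$. Positivity of direct images (Berndtsson/Pă{}un type) makes $\frac1k\log B_k$ quasi-psh on the total space with curvature bounded by $T$ plus a divisorial error coming from $E$. Since $T^{n+1}=0$, taking the positive part and pushing forward to $D$ shows that $H$ is subharmonic up to an error term. Integrating against the Green function of $D$, we bound $H(t)$ by two terms:
\begin{itemize}
\item the boundary integral of $H$, which vanishes because $U=u^\delta$ solves the equation there;
\item a divisorial term bounded by intersection numbers $P^{n}\cdot E$-type quantities plus $O(\delta)$.
\end{itemize}
The divisorial term tends to zero by the orthogonality of the Fujita/Zariski decomposition. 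I expect the genuine difficulty here to be justifying the Bergman kernel asymptotics uniformly near the non-Kähler locus and controlling the divisorial error. I would first try to carry out all estimates on the regularized model with $\delta>0$ fixed, where $P+\delta\eta$ is Kähler on a neighbourhood. Only afterwards would I send $k\to\infty$ and then $\delta\to0$, using the orthogonality estimate to kill the error.
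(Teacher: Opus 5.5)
Your overall route is the paper's, but Step~2 cannot be run at the regularized level, and fixing it changes the logic. (I write $B$ for your disk and keep $D$ for the divisor.) For fixed $\delta,\tau>0$ the entropy is not zero. So the argument never gives $\mu_t\le\lambda_t$, hence neither $U_t\ge u^\delta_t$ nor plurisubharmonicity of the regularized family. Indeed, on $Z$ the regularized equation is a twisted K\"ahler--Einstein equation whose twist $\mu^*\beta+\delta\eta-[D]$ is not positive. Both error terms are genuinely nonzero:
\begin{enumerate}
\item \emph{Boundary term.} The Bergman step needs $a_\delta+\ell_D-\log(f\rho)$ to be psh, where $\ddc a_\delta=P_\delta$ and $\eta_t^n=\rho\,d\lambda_z$. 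This requires an exact cancellation of $\log f$ against $\ell_D=\sum_i d_i\log|f_i|^2$, so $\lambda_t$ must be built from the \emph{unsmoothed} $f$. Your boundary data, however, solve the equation with $f_\tau$. The boundary term is therefore $b_{\delta,\tau,a}=\int_{\partial B}\int_{Z_t}e^{u_{\delta,\tau}}f_\tau\log(f_\tau/f)\,\eta_t^n\,d\nu_a$. It does not vanish; it only tends to $0$ as $\tau\to0$, uniformly in $\delta$.
\item \emph{Divisorial term.} This is $\int q^*g_a\,T^n\wedge[D]$, where $D=E_\alpha+E_\mu$ is the \emph{whole} zero divisor of $f$, Jacobian included. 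Zariski orthogonality covers only $E_\alpha$. For the components of $E_\mu$ you need $\int_{D_{i,t}}P_t^{n-1}=0$. This holds because $X\dashrightarrow M$ is a bimeromorphic contraction, so every $\mu$-exceptional divisor is $f$-exceptional. Passing from $P_\delta^n\wedge[D]$ to $T^n\wedge[D]$ then uses $T^n-P_\delta^n=\ddc(UQ)$, fiber integration over the $D_i$, and the uniform bound on $U$.
\end{enumerate}
What you actually obtain is a defect inequality
$\int\chi\,T_a^n\le\int\chi\,e^{U_a}f_a\eta_a^n+b_{\delta,\tau,a}+C\sum_i d_i\int_{D_{i,t}}P_{\delta,t}^{n-1}$.
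The comparison must be made only after passing to the limit envelope $\hat u$: first $\tau\to0$, then $\delta\to0$, with Bedford--Taylor on each fiber. This gives $(P_a+\ddc\hat u_a)^n\le e^{\hat u_a}f_a\eta_a^n$, hence $u\le\hat u$. The submean inequality then transfers through $\hat u=u$ on $q^{-1}(\partial B)$ and monotone convergence of the boundary integrals. Step~3 also needs two inputs you do not name. First, Berman--Berndtsson's uniform bound and $L^1$ convergence require $U\in C^{1,1}$ (Chu--Tosatti--Weinkove); Bedford--Taylor maximality alone is not enough. Second, the approximant is $\log(c_nk^{-n}K_{kv}e^{-kv})$, not $\frac1k\log K_{kv}$, which tends to $v$ rather than to the log of the Monge--Amp\`ere density. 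The mechanism is $F_k^+=W_k-(k+1)v-\ell_D$ with $W_k$ psh, so $\ddc(F_k^+S)=\ddc W_k\wedge S-(k+1)T^{n+1}-S\wedge[D]\ge-S\wedge[D]$.

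Second, ``Riemann extension handles $S$'' is incomplete. It produces an $\alpha$-psh extension $\Phi$ across $X_s$, $s\in S$, but the statement concerns $\varphi_s$, and nothing in your argument shows $\Phi|_{X_s}=\varphi_s$. A priori $\Phi_s$ could be more singular and fail to solve the equation. You need an argument like Lemma~\ref{lem:missing}:
\begin{enumerate}
\item The uniform upper bound, reverse Fatou, and $\vol([\alpha_t])\to\vol([\alpha_s])$ give $\int_{X_s}e^{\Phi_s}\beta_s^n\ge\vol([\alpha_s])$.
\item Stability of the subsolution property under upper limits gives $\np{(\alpha_s+\ddc\Phi_s)^n}\ge e^{\Phi_s}\beta_s^n$. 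This uses $\max\{v,|z|^2-C\}$, Choquet's lemma and Bedford--Taylor.
\item Comparing total masses forces equality and full mass, and uniqueness yields $\Phi_s=\varphi_s$.
\end{enumerate}
This is where volume constancy is really used. That constancy is also not a direct citation of Hacon--Xie. You must extend the central-fiber MMP, the base-point-free contraction $h_0$, and its K\"ahler class over $\Delta$ (Theorem~\ref{lem:mmp-extension} and Lemma~\ref{lem:descent}). You must then identify $\mu_t^*[\alpha_t]=[P_t]+[E_{\alpha,t}]$ as the divisorial Zariski decomposition on good fibers and on $X_0$. Finally, you must show that $\int_{Y_t}\gamma^n$ is independent of $t$.
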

	
	The paper is organized as follows.
	Section~\ref{thosewerethedays} reviews the construction of universal
	canonical metrics in projective families and in the nef case, and
	explains the difficulties encountered in big classes.
	Section~\ref{sec:general L2} recalls the $L^2$ extension theorems
	and the reduction to constructing a suitable canonical metric.
	Section~\ref{sec:geometry} uses the K\"ahler MMP to prove volume
	constancy and construct the bimeromorphic models.
	Section~\ref{sec:uniform estimate of MA} establishes the fiberwise
	Monge--Amp\`ere estimates.
	Sections~\ref{sec:Perron estimate} and~\ref{sec:pf of psh variation}
	develop the Perron-envelope argument and prove
	Theorem~\ref{thm:variation}.
	Finally, Section~\ref{sec:canonicalmetric} constructs a singular
	metric on $K_{X/\Delta}$ with fiberwise minimal singularities, and
	Section~\ref{sec:pluricanonicalextension} proves the extension
	theorem and invariance of plurigenera.

	\section{\texorpdfstring{Universal canonical metrics: the construction for projective families}{The projective case}}\label{thosewerethedays}

	A useful perspective on our approach is provided by two constructions
	of positively curved metrics on the relative canonical bundle with
	controlled fiberwise singularities. The first uses pluricanonical
	Bergman kernels in projective families; the second uses twisted
	K\"ahler--Einstein metrics when the fiberwise canonical bundles are nef.
	Both produce a metric independent of the pluricanonical section to be
	extended.
	\smallskip
	
	\subsection{The projective construction.}
	
	Let $p:X\to\Delta$ be a smooth projective family, and assume that
	$H^0(X_0,m_0K_{X_0})\neq0$ for some $m_0\geq1$. The construction
	explained in the notes \cite{KPV}, following
	Tsuji and Siu, produces a singular Hermitian metric on $K_{X/\Delta}$
	with semipositive curvature whose restriction to every fiber has minimal
	singularities. We call such a metric \emph{universal}.
	
	Choose a fixed auxiliary ample line bundle $A$, sufficiently positive
	for uniform global generation and $L^2$ extension. For each $m$, the
	fiberwise $m$-Bergman construction defines a metric on $mK_{X/\Delta}+A$.
	For $t\in \Delta$ over which sections extend locally, its local weight is
	\[
	\Phi_m(x)=\log\sup\left\{|s(x)|^2:
	s\in H^0(X_t,mK_{X_t}+A_t),\quad
	\int_{X_t}|s|^{2/m}e^{-\phi_A/m}\leq1\right\}.
	\]
	The integrand is intrinsically a volume form. Positivity of
	pluricanonical Bergman metrics extends this weight across the remaining
	parameters. The local upper bounds $\Phi_m\leq mC$ are uniform in $m$,
	so normalized weights admit a plurisubharmonic upper-limit construction
	as $m\to\infty$, while the auxiliary twist $A/m$ disappears. More
	precisely, after fixing smooth reference metrics, the normalized
	quasi-psh weights have curvature bounded below by
	$-m^{-1}\Theta_A$ relative to the canonical reference curvature.
	
	The crucial point is to identify the singularities of this limit on
	\emph{every} fiber. Fix $t_0$ and a metric with minimal singularities,
	of weight $\phi_{\min,t_0}$, on $K_{X_{t_0}}$. Consider
	\[
	V_{m,t_0}=H^0\!\left(X_{t_0},
	(mK_{X_{t_0}}+A_{t_0})\otimes\mathcal I(m\phi_{\min,t_0})\right).
	\]
	Uniform global generation and successive $L^2$ extension allow these
	sections to extend to the family. Suitable normalization gives
	fiberwise $L^{2/m}$ bounds independent of $m$. Hence the $m$-Bergman
	weight dominates their normalized logarithmic weights. Conversely,
	approximation of plurisubharmonic weights, using local
	Ohsawa--Takegoshi extension and global generation, shows that these
	section weights dominate $\phi_{\min,t_0}$ up to a uniform additive
	constant.
	
	The main point is that this comparison also holds at special $t\in \Delta$. For a section
	$s$ of $mK_{X_{t_0}}+ A$ extending to the total space, upper semicontinuity gives
	\[
	|s(x)|^2e^{-\Phi_m(x)}\leq
	\left(\int_{X_{t_0}}|s|^{2/m}e^{-\phi_A/m}\right)^m,
	\qquad x\in X_{t_0}.
	\]
	The limiting canonical weight therefore satisfies
	$\Phi_\infty|_{X_{t_0}}\geq\phi_{\min,t_0}-C_{t_0}$.
	Its restriction is itself a positively curved canonical weight, so
	the reverse comparison follows from the definition of minimal
	singularities. Since $t_0$ was arbitrary, the metric has minimal
	singularities on every fiber. This is stronger than controlling only
	the metrics arising from individual pluricanonical sections.
	\subsection{The nef case.}
	For smooth K\"ahler families whose fibers have nef canonical bundles,
	twisted K\"ahler--Einstein metrics replace the auxiliary ample bundle
	and pluricanonical Bergman kernels. Let $\omega$ be a K\"ahler form on
	$X$ and $\theta$ the curvature of its induced metric on $K_{X/\Delta}$.
	For $\varepsilon>0$, solve
	\[
	(\theta_t+\varepsilon\omega_t+\ddc\varphi_{\varepsilon,t})^n
	=e^{\varphi_{\varepsilon,t}}\omega_t^n.
	\]
	Nefness makes the perturbed fiber classes K\"ahler. For fixed
	$\varepsilon>0$, the solutions depend smoothly on $t$. Set
	\[
	\rho_\varepsilon=\theta+\varepsilon\omega+\ddc\varphi_\varepsilon.
	\]
	Here we use the convention $\ddc=\sqrt{-1}\partial\bar\partial$.
	In local coordinates $(z^1,\ldots,z^n,t)$ adapted to $p$, write
	$\rho_\varepsilon=\sqrt{-1}g_{A\bar B}\,dw^A\wedge d\bar w^B$,
	where $(g_{i\bar j})_{1\leq i,j\leq n}$ is the positive definite
	fiber metric and $(g^{\bar j i})$ its inverse. Its horizontal lift and
	geodesic curvature are
	\begin{align}
		v_\varepsilon
		&=\frac{\partial}{\partial t}
		-\sum_{i,j=1}^n g^{\bar j i}g_{t\bar j}
		\frac{\partial}{\partial z^i},
		\label{eq:nef-horizontal}\\
		c_\varepsilon
		&=g_{t\bar t}
		-\sum_{i,j=1}^n g_{t\bar j}g^{\bar j i}g_{i\bar t}
		=\rho_\varepsilon(v_\varepsilon,\overline{v_\varepsilon}).
		\label{eq:nef-geodesic-curvature}
	\end{align}
	Evaluation of a $(1,1)$-form on $(v,\bar v)$ here denotes its
	Hermitian value, with the factor $\sqrt{-1}$ removed.
	Differentiating the Monge--Amp\`ere equation gives the identity \cite[Section~3.2, equation~(35)]{Pau17}:
	\begin{equation}\label{eq:nef-curvature-identity}
		\left(-\Delta_{\varepsilon,t}+1\right)c_\varepsilon
		=\left|\bar\partial_{X_t}v_\varepsilon\right|^2_{\rho_{\varepsilon,t}}
		+\varepsilon\omega(v_\varepsilon,\overline{v_\varepsilon}).
	\end{equation}
	Our sign convention is
	$\Delta_{\varepsilon,t}f=\sum_{i,j}g^{\bar j i}
	\partial_i\partial_{\bar j}f$, so that $-\Delta_{\varepsilon,t}$
	is nonnegative. The tensor $\bar\partial_{X_t}v_\varepsilon$ is
	a fiberwise $(0,1)$-form with values in $T^{1,0}X_t$.
	The right-hand side of \eqref{eq:nef-curvature-identity} is
	nonnegative, and its second term is strictly positive since
	$\omega$ is K\"ahler and $dp(v_\varepsilon)=\partial/\partial t$.
	The maximum principle on the compact fiber therefore gives
	$c_\varepsilon>0$. Together with fiberwise positivity, the
	Schur-complement formula \eqref{eq:nef-geodesic-curvature} proves
	total-space positivity of $\rho_\varepsilon$.
	
	In order to take the limit as $\varepsilon\to 0$, the normalization is essential. Put
	\[
	V_\varepsilon=\int_{X_t}(\theta_t+\varepsilon\omega_t)^n,
	\quad W=\int_{X_t}\omega_t^n,
	\quad u_\varepsilon=\varphi_\varepsilon-\log(V_\varepsilon/W).
	\]
	Both integrals are independent of $t$, so the normalization subtracts
	a single constant on the total space and preserves curvature. Moreover,
	\[
	\int_{X_t}e^{u_{\varepsilon,t}}\,d\nu_t=1,
	\qquad d\nu_t=W^{-1}\omega_t^n.
	\]
	Uniform $L^1$ bounds for normalized quasi-psh functions imply
	$\int u_{\varepsilon,t}\,d\nu_t
	\geq\sup_{X_t}u_{\varepsilon,t}-C$.
	Jensen's inequality then gives
	$0\leq\sup_{X_t}u_{\varepsilon,t}\leq C$ locally uniformly in $t$
	and $\varepsilon$. This removes the divergent additive constant
	when $V_\varepsilon\to0$.
	
	The remaining input is a volume-independent comparison with the
	extremal envelope in the perturbed class. It yields, on each fiber,
	\[
	u_{\varepsilon,t}\geq
	V_{\theta_t+\varepsilon\omega_t}-C_t
	\geq V_{\theta_t}-C_t,
	\]
	where $C_t$ is independent of $\varepsilon$ and
	$V_\gamma=\sup\{v\in\PSH(X_t,\gamma):v\leq0\}$.
	For $\varepsilon_j\downarrow0$, set
	\[
	U_N=\left(\sup_{j\geq N}u_{\varepsilon_j}\right)^*,
	\qquad \Psi=\lim_{N\to\infty}U_N.
	\]
	Here regularization is taken on the total space. Each $U_N$ is
	$(\theta+\varepsilon_N\omega)$-psh; the decreasing limit is
	$\theta$-psh. The fiberwise lower bound above 
	shows that $\Psi_t$ has minimal singularities on every fiber.
	
	\subsection{Main difficulties for big families.}
	Even granting the required conclusions of the relative K\"ahler MMP,
	a substantial analytic difficulty remains. The birational reduction
	describes the positive and divisorial parts of the adjoint classes and
	supplies the volume identities needed for the argument. It does not
	provide regularity in the base direction for the fiberwise
	Monge--Amp\`ere solutions. On a suitable model they satisfy
	\[
	(P_t+\ddc u_t)^n=e^{u_t}f_t\eta_t^n,
	\]
	where $P_t$ is semipositive and big, but may degenerate, and $f_t$ may
	vanish along divisors. Boundedness of the potentials and smoothness
	on suitable open subsets of individual fibers do not imply estimates
	for transverse derivatives. Stability in pluripotential theory can
	provide continuity under appropriate hypotheses, but does not by
	itself justify differentiating the equation with respect to the base.
	
	This is where the direct curvature argument used in the nef case
	becomes difficult to apply. There, positively twisted fiberwise
	metrics are smooth and nondegenerate, and their smooth dependence
	on the parameter permits the differential identities underlying
	total-space positivity. In the big case, the analogous argument
	would require control of $\partial_tu$, $\partial_t\bar\partial_tu$
	and mixed derivatives, together with the inverse of a degenerating
	fiber metric. Even when these expressions can be computed on an
	open locus where the family of solutions is smooth, extending the
	inequality across the degeneracy locus requires additional estimates.
	Neither fiberwise regularity nor the MMP supplies those estimates.
	
	The Perron-envelope method used here avoids \emph{precisely} this transverse differentiation
	of the fiberwise solutions.

	\section{\texorpdfstring{General $L^2$ extension theorem}{General L2 extension theorem}}
	\label{sec:general L2}
	In a series of work \cite{GZ15SC,GZ15L2,GZ17,ZZ17,ZZ20,ZZ20PJM}, general $L^2$ extension theorems with optimal estimates have being established both in the settings of almost Stein and weakly pseudoconvex K\"ahler manifolds for sections valued in bundles with singular metrics. 
	In this article we use the following version of $L^2$ extension theorem, which essentially uses strong openness theorem \cite{GZ15} and generalized Siu's lemma by Zhou-Zhu.
	\begin{theorem}[{\cite[Theorem 1.1 and Remark 1.3]{ZZ17}}]\label{thm: ext}
		Let $(X,\omega)$ be a weakly pseudoconvex K\"ahler manifold of
		dimension $n$, and let $L$ be a holomorphic line bundle over $X$ with a
		singular Hermitian metric $h$. Let $s:X\rightarrow\mathbb C^r$ be a
		holomorphic map, $1\leq r\leq n$, such that $0$ is a regular value of
		$s$. Assume that $\sqrt{-1}\Theta_{L,h}\geq0$ and that
		$|s(x)|\leq M$ on $X$. Put $Y:=s^{-1}(0)$ and endow
		$\mathbb C^r$ with its standard flat Hermitian metric. The norms of
		$\bigwedge^r(ds)$, $f$, and $F$ below are induced by this metric,
		$\omega$, and $h$, and
		\[
		dV_{X,\omega}:=\frac{\omega^n}{n!},
		\qquad
		dV_{Y,\omega}:=\frac{(\omega|_Y)^{n-r}}{(n-r)!}.
		\]
		Then every
		\[
		f\in H^0\!\left(Y,(K_X\otimes L)|_Y\right)
		\]
		satisfying
		\[
		\int_Y\frac{|f|_L^2}{|\bigwedge^r(ds)|^2}\,
		dV_{Y,\omega}<+\infty
		\]
		admits an extension $F\in H^0(X,K_X\otimes L)$ with $F|_Y=f$ and
		\[
		\int_X|F|_L^2\,dV_{X,\omega}
		\leq C_{r,M}
		\int_Y\frac{|f|_L^2}{|\bigwedge^r(ds)|^2}\,
		dV_{Y,\omega},
		\]
		where $C_{r,M}$ depends only on $r$ and $M$.
	\end{theorem}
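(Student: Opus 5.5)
We would follow the now-standard twisted Bochner--Kodaira route, in the optimal-constant form of Guan--Zhou, on relatively compact sublevel sets, then pass to limits.

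\textbf{Setup and regularization.} Fix a smooth psh exhaustion $\psi$ of $X$ and work on $X_c=\{\psi<c\}$. Since $X_c$ is weakly pseudoconvex and K\"ahler, $X_c\setminus Z$ carries a complete K\"ahler metric for a suitable analytic subset $Z$ chosen to contain the bad locus. $L^2$ estimates for $\bar\partial$ proved there extend across $Z$ by the usual Riemann extension argument for $L^2$ holomorphic objects.

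On $\overline{X_c}$ we regularize the singular weight $\varphi$ of $h$ \`a la Demailly. This gives smooth weights $\varphi_\varepsilon\downarrow\varphi$ with $\sqrt{-1}\partial\bar\partial\varphi_\varepsilon\geq-\varepsilon\omega$. The curvature loss is absorbed by the auxiliary functions below; shrinking $c$ slightly if needed and letting $\varepsilon\to0$ before $c\to\infty$ makes the loss harmless. Because $h_\varepsilon\leq h$, we have
\[
\int_Y|f|^2_{h_\varepsilon}|\textstyle\bigwedge^r ds|^{-2}\,dV_{Y,\omega}\leq \int_Y|f|^2_h|\textstyle\bigwedge^r ds|^{-2}\,dV_{Y,\omega}<\infty.
\]
The right-hand side is the finite quantity of the statement, so the data bound is uniform in $\varepsilon$.

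\textbf{Twisted estimate.} Take a local holomorphic (hence smooth after gluing) extension $\tilde f$ of $f$ near $Y\cap\overline{X_c}$, and a cutoff $\chi(|s|^2/\delta)$. Solve
\[
\bar\partial u=\bar\partial\bigl(\chi(|s|^2/\delta)\tilde f\bigr)
\]
with weight $e^{-\varphi_\varepsilon-r\log|s|^2}$. We use the twisted Bochner--Kodaira--Nakano inequality with twist functions $\eta=g(-\log|s|^2)$ and $\lambda$. These are obtained from the Guan--Zhou ODE, which yields the constant $C_{r,M}$ depending only on $r$ and $M$ (with $\sup|s|\leq M$).

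Since $\bar\partial(\chi\tilde f)$ is supported where $|s|^2\sim\delta$, the right-hand side of the estimate tends, as $\delta\to0$, to a multiple of the $Y$-integral, via the coarea formula with $|\bigwedge^r ds|$. The nonintegrability of $|s|^{-2r}$ near $Y$ forces $u|_Y=0$. Then $F_{\delta,\varepsilon,c}=\chi\tilde f-u$ is holomorphic, satisfies $F|_Y=f$, and obeys the estimate with $h_\varepsilon$ in place of $h$.

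\textbf{Limits, and the main obstacle.} Uniform $L^2$ bounds give weak limits, holomorphic by Montel, as $\delta\to0$ and then $\varepsilon\to0$. By monotone convergence in $\varepsilon$, $\int_{X_c}|F|_h^2\leq C_{r,M}\int_Y\cdots$. A diagonal argument in $c\to\infty$ then yields $F$ on $X$.

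The delicate point is the singular metric. When $f$ is merely $L^2$ with respect to $h|_Y$ and $h$ is not smooth, one must ensure that the local extensions $\tilde f$ and the limiting $F$ have the correct integrability in a neighborhood of $Y$, so that the error terms near $|s|^2\sim\delta$ are controlled by the $Y$-integral uniformly in $\varepsilon$. Our first attempt is to use the strong openness theorem \cite{GZ15}: it gives $f\in\mathcal I((1+\gamma)\varphi|_Y)$ locally, which provides slack for the cutoff estimates. Zhou--Zhu's generalized Siu lemma then controls $\limsup$ of the $h_\varepsilon$-integrals over shrinking tubes by the $h$-integral on $Y$. We expect this step to be the main technical difficulty.
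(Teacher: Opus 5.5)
Your route differs from the paper's, and as written it does not close. The paper does not reprove the extension theorem. It specializes \cite[Theorem 1.1 and Remark 1.3]{ZZ17} as follows:
replace $s$ by $s_\lambda=\lambda s$ with $\lambda=(e\max\{1,M\})^{-1}$, so that $|s_\lambda|\le e^{-1}$; take $E=X\times\mathbb C^r$ flat, $\psi=0$, $\alpha=1$, $R(\tau)=e^{-\tau}$; and note that the curvature hypotheses reduce to $\sqrt{-1}\Theta_{L,h}\ge0$, because $\log|s_\lambda|^2$ is psh and $\Theta_E=0$. The constant $C_{r,M}=\frac{(2\pi)^r}{r!}(e\max\{1,M\})^{2r}$ then follows from $\bigwedge^r ds_\lambda=\lambda^r\bigwedge^r ds$. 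The dependence on $M$ comes from this rescaling, not from re-solving a Guan--Zhou ODE.

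The concrete failure in your argument is the regularization step. Smooth weights $\varphi_\varepsilon\downarrow\varphi$ on $\overline{X_c}$ with $\sqrt{-1}\partial\bar\partial\varphi_\varepsilon\ge-\varepsilon\omega$ do not exist in general on a weakly pseudoconvex manifold. Demailly's smooth regularization loses positivity in proportion to the Lelong numbers of $\varphi$; only the version with analytic singularities has loss tending to $0$. For example, let $X$ be the blow-up of the unit ball of $\mathbb C^2$ at the origin, $C$ the exceptional curve, and $L=\mathcal O_X(C)$ with the metric $h$ for which $|s_C|_h\equiv1$, so that $\sqrt{-1}\Theta_{L,h}=2\pi[C]\ge0$. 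A smooth metric on $L$ near $C$ with curvature $\ge-\varepsilon\omega$ would give $-1=C\cdot C=\int_C\frac{\sqrt{-1}}{2\pi}\Theta\ge-\frac{\varepsilon}{2\pi}\int_C\omega$, which is absurd for small $\varepsilon$.

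So you must use approximants with analytic singularities, which is the only setting in which removing an analytic set $Z$ is needed at all. The weights are then genuinely singular near $Y$. The data term $\bar\partial(\chi\tilde f)$, including the gluing error $\chi\,\bar\partial\tilde f$, must be controlled in $L^2$ against $e^{-\varphi_\varepsilon}|s|^{-2r}$ uniformly in $\varepsilon$. Your last paragraph only names strong openness and the Zhou--Zhu Siu-type lemma as a hoped-for remedy. The argument therefore stops exactly where singular metrics differ from smooth ones, and that step is the real content of \cite{ZZ17}, which the paper imports wholesale.

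Likewise, ``the curvature loss is absorbed by the auxiliary functions'' needs an actual mechanism, such as an $L^2$ estimate with an error term $\bar\partial u+\sqrt{\varepsilon}\,w=g$ that vanishes in the limit. A merely weakly psh exhaustion cannot absorb $-\varepsilon\omega$. For this statement the efficient proof is the paper's: cite the theorem and verify its hypotheses after rescaling $s$.
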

	
	\begin{remark}
		This is a direct specialization of
		\cite[Theorem 1.1 and Remark 1.3]{ZZ17}. Let
		$E:=X\times\mathbb C^r$ carry its standard flat metric and put
		\[
		\lambda:=\frac{1}{e\max\{1,M\}},
		\qquad s_\lambda:=\lambda s.
		\]
		Then $|s_\lambda|\leq e^{-1}$. Apply the cited theorem with
		$\psi=0$, $\alpha=1$, and $R(\tau)=e^{-\tau}$. The local weights of
		$h$ are plurisubharmonic, hence locally bounded above, and
		$\Theta_E=0$. Moreover, on $X\setminus Y$,
		\[
		\sqrt{-1}\Theta_{L,h}
		+r\sqrt{-1}\partial\bar\partial\log|s_\lambda|^2\geq0,
		\qquad
		r\log|s_\lambda|^2\leq-2r,
		\]
		so all three curvature-side hypotheses of
		\cite[Theorem 1.1]{ZZ17} hold. Since $C_R=1$ and
		$e^\tau R(\tau)=1$, its estimate becomes
		\[
		\int_X|F|_L^2\,dV_{X,\omega}
		\leq\frac{(2\pi)^r}{r!}
		\int_Y\frac{|f|_L^2}
		{|\bigwedge^r(ds_\lambda)|^2}\,dV_{Y,\omega}.
		\]
		As $\bigwedge^r(ds_\lambda)=\lambda^r\bigwedge^r(ds)$, one may take
		\[
		C_{r,M}=\frac{(2\pi)^r}{r!}
		\bigl(e\max\{1,M\}\bigr)^{2r}.
		\]
		A proper-family formulation with additional multiplier-ideal conclusions
		appears in \cite[Theorem 1.3]{ZZ20}.
	\end{remark}
	In our situation, $p:X\to\Delta$ is a proper submersion, $X$ is a weakly pseudoconvex K\"ahler manifold with K\"ahler form $\omega$. 
	If we can construct $\theta=\theta_{\omega}$-psh function $\Phi$ on $X$ so that $\Phi|_{X_0}\in\operatorname{PSH}(X_0,\theta_0)$ has minimal singularities (which is done in Theorem~\ref{thm:metric}), then for every $s\in H^0(X_0,mK_{X_0})$, we take $L=(m-1)K_{X/\Delta}$ equiped with metric $h_{\omega}^{\otimes(m-1)}e^{-\Phi}$ and the extension Theorem~\ref{thm: ext} applies to produce $\hat{s}\in H^0(X,mK_X)$ so that $\hat{s}|_{X_0}=s_0\wedge dt$.
	
	\section{K\"ahler MMP}\label{sec:geometry}

	We follow \cite{DHY,HX} to recall some basic notions in K\"ahler MMP.
	Let $X$ be a complex analytic variety, that is, an irreducible reduced
	complex space. Denote by $\PH_X$ the sheaf of real parts of
	holomorphic functions, by $\mathcal A_X$ the sheaf of smooth real-valued
	functions, and by $\mathcal D_X$ the sheaf of real distributions.
	A closed real $(1,1)$-form, respectively current, with local potentials
	is a global section of $\mathcal A_X/\PH_X$, respectively
	$\mathcal D_X/\PH_X$. Set
	\[
	H^{1,1}_{\mathrm{BC}}(X,\mathbb R)=H^1(X,\PH_X).
	\]
	The class of such a form $\theta$ or a current $T$ is denoted by $[\theta]$, $[T]$ respectively.
	For a holomorphic map $f:X\to Z$ and a smooth
	representative $\theta$ of $\alpha\in H^{1,1}_{\mathrm{BC}}(X,\mathbb R)$,
	the pullback class is
	\[
	f^*\alpha=[f^*\theta]
	\in H^{1,1}_{\mathrm{BC}}(X,\mathbb R).
	\]
	For a proper morphism $f:X\to S$ of normal analytic varieties,
	\[
	\alpha\equiv_S\alpha'
	\quad\Longleftrightarrow\quad
	\alpha-\alpha'=f^*\gamma
	\quad\text{for some }\gamma\in H^{1,1}_{\mathrm{BC}}(S,\mathbb R).
	\]

	\begin{definition}[$b$-currents] 
		A real closed $b$-$(1,1)$-current $\boldsymbol\beta$ over a normal
		analytic variety $X$ is a collection of real closed $(1,1)$-currents
		$\beta_V$ on all normal proper bimeromorphic models $V\to X$ such that
		$r_*\beta_W=\beta_V$ for every proper bimeromorphic morphism
		$r:W\to V$ of normal models over $X$.
		The current $\beta_V$ is called the \emph{trace} on $V$.
		The $b$-current is called positive if every trace is positive.
		We say $\boldsymbol\beta$ \emph{descends to} $V$ if $\beta_V$ has local potentials and,
		for every higher model $r:W\to V$, the trace $\beta_W$ has local
		potentials and
		\[
		[\beta_W]=r^*[\beta_V]
		\quad\text{in }H^{1,1}_{\mathrm{BC}}(W,\mathbb R).
		\]
		For a normal variety $M$ properly bimeromorphic to $X$ and a
		common resolution $X\xleftarrow{r}W\xrightarrow{s}M$, its trace
		is defined by $\beta_M:=s_*\beta_W$.
	\end{definition}

	If $T$ is a smooth closed real $(1,1)$-form with local potentials,
	or a closed positive $(1,1)$-current with local plurisubharmonic
	potentials on $X$,
	$T$ induces a $b$-current $\overline T$ defined on every higher
	model $r:W\to X$ by
	\[
	(\overline T)_W=r^*T.
	\]
	For $T=i\partial\bar\partial v$ locally with $v$ plurisubharmonic,
	this pullback is locally
	$i\partial\bar\partial(v\circ r)$.
	For a normal variety $M$ properly bimeromorphic to $X$ and a
	common resolution $X\xleftarrow{r}W\xrightarrow{s}M$, it holds that
	\[
	(\overline T)_M=s_*r^*T
	\]

	\begin{definition}[Generalized pairs,
		{\cite[Definitions~2.7]{DHY}}]
		Let $f:X\to S$ be a proper morphism of normal analytic varieties,
		where $S$ is relatively compact. A \emph{generalized pair}
		$(X/S,B+\boldsymbol\beta)$ consists of an effective real divisor
		$B$ and a real closed $b$-$(1,1)$-current $\boldsymbol\beta$
		for which there exist a resolution $\nu:X'\to X$, a real divisor
		$B'$ with simple normal crossings, and a class
		$a_X\in H^{1,1}_{\mathrm{BC}}(X,\mathbb R)$ satisfying
		\[
		\nu_*B'=B,\qquad
		\boldsymbol\beta\text{ descends to }X',\qquad
		[\beta_{X'}]\text{ is nef over }S,
		\]
		and
		\begin{equation}\label{eq:mmp-adjoint-pullback}
			[K_{X'}+B'+\beta_{X'}]=\nu^*a_X.
		\end{equation}
		The morphism $\nu$ is called a \emph{structure morphism} or
		\emph{log resolution}. We write
		\[
		a_X=[K_X+B+\beta_X].
		\]
		The class $a_X$ is called the adjoint class, $K_X$ denotes
		the reflexive canonical sheaf $\omega_X=(\textstyle\bigwedge^{\dim X}\Omega_X^1)^{**}$.
		A generalized pair over a point is called compact and is denoted by
		$(X,B+\boldsymbol\beta)$. It is K\"ahler if $X$ is K\"ahler.
	\end{definition}

	\begin{definition}[Generalized discrepancies and singularities]
		For a prime divisor $E$ over a generalized pair
		$(X,B+\boldsymbol\beta)$, choose a log resolution $\nu:X'\to X$
		on which $E$ appears and define
		\[
		a(E,X,B+\boldsymbol\beta)=-\operatorname{mult}_E B',
		\]
		where $B'$ satisfies \eqref{eq:mmp-adjoint-pullback}
		\cite[Definition~2.9(1)]{DHY}.
		The pair is \emph{generalized Kawamata log terminal} (gklt),
		respectively \emph{generalized log canonical} (glc), if
		\[
		a(E,X,B+\boldsymbol\beta)>-1,
		\qquad\text{respectively}\qquad
		a(E,X,B+\boldsymbol\beta)\geq-1,
		\]
		for every prime divisor $E$ over $X$
		\cite[Definition~2.9(2)]{DHY}.
		It is \emph{generalized canonical} if
		$a(E,X,B+\boldsymbol\beta)\geq0$ for every exceptional prime
		divisor $E$ over $X$ \cite[Section~2]{HX}.
	\end{definition}
	
	\begin{definition}[Big, modified big, and modified K\"ahler classes, {\cite[Definition~2.8]{HX}}] 
		On a compact normal K\"ahler variety, a K\"ahler current is a real
		closed $(1,1)$-current $T$ with local potentials such that
		$T\geq\varepsilon\omega$ for some K\"ahler form $\omega$ and $\varepsilon>0$. A Bott--Chern class is \emph{big} if it contains
		a K\"ahler current.
		For a proper bimeromorphic morphism $\mu:V\to X$ from a normal
		compact K\"ahler variety to a normal compact analytic variety and
		a big class $\xi\in H^{1,1}_{\mathrm{BC}}(V,\mathbb R)$,
		the push-forward $\mu_*\xi$ is called \emph{modified big}. 
		A class $\alpha\in H^{1,1}_{\mathrm{BC}}(X,\mathbb R)$ on a normal
		compact K\"ahler variety is \emph{modified K\"ahler} if there exist
		a proper bimeromorphic morphism $\mu:V\to X$ and a K\"ahler form
		$\omega_V$ such that $\mu_*\omega_V$ has local potentials
		and $\alpha=[\mu_*\omega_V]$.
	\end{definition}
	
	\begin{remark}[Bigness under modifications,
		{\cite[Definition~2.8 and Remark~2.9]{HX}}]
		\label{rem:mmp-bigness}
		Let $\nu:V\to X$ be a proper bimeromorphic morphism between
		normal compact K\"ahler varieties.
		If $T$ is a real closed current with local potentials on $V$,
		$[T]$ is big, and $\nu_*T$ has local potentials, then
		$[\nu_*T]$ is big.
		If $V$ is smooth and $\alpha\in H^{1,1}_{\mathrm{BC}}(X,\mathbb R)$
		is big, then $\nu^*\alpha$ is big.
	\end{remark}
	
	\begin{definition}[Semiampleness, {\cite[Section~2]{HX}}]
		A class $\alpha\in H^{1,1}_{\mathrm{BC}}(X,\mathbb R)$ on a
		compact analytic variety $X$ is \emph{semiample} if there are
		a holomorphic map $h:X\to Y$ to a compact analytic variety and
		a K\"ahler class $\gamma$ on $Y$ such that $\alpha=h^*\gamma$.
	\end{definition}

	A proper holomorphic map $h:X\to Y$ is a \emph{contraction} if $ h_*\mathcal O_X=\mathcal O_Y$.
	It is called \emph{projective} if $X$ admits an $h$-ample line bundle.
	A proper morphism $h:X\to Y$ of compact complex spaces is called
	\emph{Moishezon} if there is a surjective projective morphism
	$r:W\to X$ such that $h\circ r$ is projective.
	A \emph{Moishezon contraction} is a morphism satisfying both
	conditions.

	\begin{definition}[Exceptional divisors and bimeromorphic contractions]
		For a proper surjective morphism $h:X\to Y$ of normal varieties,
		a real divisor $E$ on $X$ is \emph{$h$-exceptional} if
		\[
		\operatorname{codim}_Y h(\operatorname{Supp}E)\geq2.
		\]
		For a proper bimeromorphic map $\phi:X\dashrightarrow M$,
		choose a common resolution $X\xleftarrow{r}W\xrightarrow{s}M$.
		If $D$ is a prime divisor on $X$ and $\widetilde D$ is its
		strict transform on $W$, then $D$ is called $\phi$-exceptional if
		$\operatorname{codim}_M s(\widetilde D)\geq2$.
		Set $\phi_*D=s_*\widetilde D$. 
		A proper bimeromorphic map $\phi:X\dashrightarrow M$ between
		normal varieties is called a \emph{bimeromorphic contraction} if there
		are no $\phi^{-1}$-exceptional prime divisors {\cite[Definition 2.15(4)]{HX}}.
	\end{definition}
	A proper bimeromorphic map $\phi:X\dashrightarrow M$
	between normal varieties is a bimeromorphic contraction if and
	only if on some common resolution
	$X\xleftarrow{r}W\xrightarrow{s}M$, every $r$-exceptional
	prime divisor is $s$-exceptional.

	\begin{definition}[Non-positive and negative contractions, {\cite[Definition~2.16]{HX}}]
		Let $\phi:X\dashrightarrow M$ be a proper bimeromorphic
		contraction, let $\alpha\in H^{1,1}_{\mathrm{BC}}(X,\mathbb R)$,
		and suppose that
		\[
		\alpha_M:=s_*r^*\alpha
		\in H^{1,1}_{\mathrm{BC}}(M,\mathbb R)
		\]
		on some common resolution $X\xleftarrow{r}W\xrightarrow{s}M$.
		The map $\phi$ is called \emph{$\alpha$-non-positive}  if some such
		resolution satisfies
		\begin{equation}\label{eq:mmp-non-positive}
			r^*\alpha=s^*\alpha_M+[E],
			\qquad E\geq0,\qquad E\text{ is }s\text{-exceptional}.
		\end{equation}
		It is called \emph{$\alpha$-negative} if in addition,
		$\operatorname{Supp}E$ contains the strict transform of every
		$\phi$-exceptional prime divisor on $X$.
	\end{definition}
	
	\begin{definition}[Models of gklt pairs, {\cite[Definition~2.11 and Lemma~2.12(1)]{DHY}}]
		Let $(X,B+\boldsymbol\beta)$ be a compact gklt pair and
		$\phi:X\dashrightarrow M$ a proper bimeromorphic map.
		Put $B_M=\phi_*B+E_M$, where $E_M$ is the reduced sum of the
		$\phi^{-1}$-exceptional prime divisors. Assume that
		$(M,B_M+\boldsymbol\beta)$ is a generalized pair, and set
		$a_M=[K_M+B_M+\beta_M]$.
		$\phi:X\dashrightarrow M$ is called a \emph{weak log canonical model} if
		$(M,B_M+\boldsymbol\beta)$ is glc, $a_M$ is nef, and
		\[
		a(F,X,B+\boldsymbol\beta)
		\leq a(F,M,B_M+\boldsymbol\beta)
		\]
		for every $\phi$-exceptional prime divisor $F$ on $X$.
		It is a \emph{log terminal model} if $\phi$ is a bimeromorphic
		contraction, $(M,\phi_*B+\boldsymbol\beta)$ is strongly
		$\mathbb Q$-factorial and gklt, $a_M$ is nef, and the above
		inequality is strict for every $\phi$-exceptional prime divisor.
		A log terminal model is called \emph{good} if $a_M$ is semiample.
	\end{definition}

	\begin{definition}[Divisorial decomposition]
		For a big class $\alpha$ on a compact K\"ahler manifold $V$,
		let
		\[
		\nu(\alpha,D)
		=\inf_{\substack{T\in\alpha\\ T\geq0}}\nu(T,D),
		\qquad
		N(\alpha)=\sum_{D\subset V}\nu(\alpha,D)D,
		\]
		where $T$ ranges over closed positive currents and $D$ over
		prime divisors; $\nu(T,D)$ is the generic Lelong number along $D$.
		For a big class $\alpha$ on a normal compact K\"ahler variety
		$X$ and a K\"ahler resolution $\nu:V\to X$, set
		\[
		N(\alpha)=\nu_*N(\nu^*\alpha).
		\]
		For real divisors $A=\sum_i a_iD_i$ and $B=\sum_i b_iD_i$,
		indexed by the union of their prime components with zero
		coefficients inserted where necessary, define
		\[
		A\wedge B=\sum_i\min\{a_i,b_i\}D_i.
		\]
	\end{definition}

	\begin{proposition}
		\label{prop:mmp-discrepancies}
		Let $(X,B+\boldsymbol\beta)$ be a compact K\"ahler gklt pair.
		\begin{enumerate}
			\item If $\phi:X\dashrightarrow M$ is a weak log canonical model,
			then
			\[
			a(F,X,B+\boldsymbol\beta)
			\leq a(F,M,B_M+\boldsymbol\beta)
			\]
			for every prime divisor $F$ over $X$, and the induced pair
			on $M$ is gklt \cite[Lemma~2.12(1)]{DHY}.
			\item If $\phi:X\dashrightarrow M$ is a log terminal model and $M$ is K\"ahler,
			then $\phi$ is $a_X$-non-positive by \cite[Lemma~2.19]{HX}, which means there is a common resolution
			$X\xleftarrow{r}W\xrightarrow{s}M$ satisfying
			\[
			r^*a_X=s^*a_M+[E],
			\qquad E\geq0,\qquad E\text{ is }s\text{-exceptional}.
			\]
		\end{enumerate}
	\end{proposition}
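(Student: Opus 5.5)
The plan is to compare the two adjoint classes on a common log resolution and then apply the negativity lemma. Part (2) will follow from the same computation once the bimeromorphic-contraction hypothesis is used.

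\textbf{Setting up the comparison.} Choose a common resolution $X\xleftarrow{r}W\xrightarrow{s}M$ with the following properties. $W$ is a compact K\"ahler manifold, and it is a log resolution for both generalized pairs. Moreover, $\boldsymbol\beta$ descends to $W$; this can be arranged by dominating the structure morphism $\nu$. Since $\boldsymbol\beta$ descends to $W$, the same trace $\beta_W$ appears in both pullback formulas:
\[
r^*a_X=[K_W+B_W+\beta_W],\qquad s^*a_M=[K_W+B^M_W+\beta_W].
\]
Here the coefficients are $\operatorname{mult}_F B_W=-a(F,X,B+\boldsymbol\beta)$ and $\operatorname{mult}_F B^M_W=-a(F,M,B_M+\boldsymbol\beta)$. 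Set $G:=B^M_W-B_W$. Then
\[
r^*a_X-s^*a_M=[G],\qquad \operatorname{mult}_F G=a(F,M,B_M+\boldsymbol\beta)-a(F,X,B+\boldsymbol\beta).
\]
So the inequality in (1) for every $F$ over $X$ is exactly the statement $G\geq0$.

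\textbf{Negativity lemma for (1).} Since $r^*a_X\equiv_X 0$, we have $-[G]\equiv_X s^*a_M$. The class $a_M$ is nef, so $-G$ is $r$-nef. Next compute $r_*G$ on $X$. For a non-$\phi$-exceptional prime divisor $D$ on $X$, the coefficient of $D$ in $B_M$ equals its coefficient in $B$, so $\operatorname{mult}_D G=0$. For a $\phi$-exceptional prime $F$, the defining inequality of a weak log canonical model gives $\operatorname{mult}_F G\geq0$. Hence $r_*G\geq0$. The negativity lemma for the proper bimeromorphic morphism $r$ then gives $G\geq0$; the version needed is for $r$-nef Bott--Chern classes on K\"ahler manifolds, as in \cite{DHY,HX}. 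Finally, for every prime divisor $F$ over $X$,
\[
a(F,M,B_M+\boldsymbol\beta)\geq a(F,X,B+\boldsymbol\beta)>-1,
\]
so the induced pair on $M$ is gklt.

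\textbf{Part (2).} A log terminal model has no $\phi^{-1}$-exceptional divisors, so $B_M=\phi_*B$. It is gklt and $a_M$ is nef, and the discrepancy inequality is strict on $\phi$-exceptional divisors. Hence it is in particular a weak log canonical model, and the first part gives $E:=G\geq0$ with $r^*a_X=s^*a_M+[E]$. It remains to show that $E$ is $s$-exceptional. A component of $E$ that is not $s$-exceptional is the strict transform of a prime divisor on $M$. Because $\phi$ is a bimeromorphic contraction, this divisor is the strict transform of a non-$\phi$-exceptional divisor $D$ on $X$. For such $D$ we computed $\operatorname{mult}_D G=0$, a contradiction. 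The K\"ahler hypothesis on $M$ is used so that $s^*a_M$ and nefness make sense in the K\"ahler framework, and so that the negativity lemma applies on $W$.

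\textbf{Main obstacle.} The delicate step is the negativity lemma in the analytic Bott--Chern setting. There the nefness of $-G$ over $X$ must be interpreted via restriction to the fibres of $r$, using K\"ahler forms on $W$, rather than via intersection with curves. I would invoke the analytic version available in \cite{DHY,HX}. Failing that, I would reduce to the projective case, using that $r$ is projective over $X$ after further blow-ups, by restricting to general complete intersection surfaces over the exceptional loci and applying Hodge index.
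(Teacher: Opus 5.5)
Your proof is correct, and it is the standard negativity-lemma argument on a common resolution behind the two results the paper simply cites here (\cite[Lemma~2.12(1)]{DHY} and \cite[Lemma~2.19]{HX}); the paper itself gives no further proof. Fix one sign slip: the relations you actually use, $r^*a_X-s^*a_M=[G]$ and $\operatorname{mult}_F G=a(F,M,B_M+\boldsymbol\beta)-a(F,X,B+\boldsymbol\beta)$, require $G:=B_W-B^M_W$ rather than $B^M_W-B_W$, and to cover every $F$ over $X$ you should take $W$ arbitrarily high (equivalently, note that $G$ simply pulls back to higher models).
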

	
	\begin{proposition}
		\label{prop:mmp-rational}
		
		\begin{enumerate}
			\item If $(X,B+\boldsymbol\beta)$ is gklt, then $X$ has rational
			singularities \cite[Lemma~2.25]{HX}. 
			\item Let $h:M\to Y$ be a proper bimeromorphic holomorphic map
			between normal compact varieties in Fujiki's class $\mathcal C$.
			Suppose that $(M,B_M+\boldsymbol\beta)$ is gklt and
			\[
			[K_M+B_M+\beta_M]=h^*\gamma,\qquad
			\gamma\in H^{1,1}_{\mathrm{BC}}(Y,\mathbb R).
			\]
			Then $(Y,h_*B_M+\boldsymbol\beta)$ is gklt,
			\[
			[K_Y+h_*B_M+\beta_Y]=\gamma,
			\]
			and $Y$ has rational singularities
			\cite[Lemmas~2.31]{HX}.
		\end{enumerate}
	\end{proposition}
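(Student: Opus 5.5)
Both statements are taken from \cite{HX}. The plan is to reduce (1) to a relative vanishing theorem on a log resolution, and to obtain (2) by transporting the structural identity \eqref{eq:mmp-adjoint-pullback} from $M$ to $Y$ and then applying (1).

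\textbf{Part (1).} Fix a log resolution $\nu:X'\to X$ as in the definition of a generalized pair, so that $[K_{X'}+B'+\beta_{X'}]=\nu^*a_X$, with $B'$ simple normal crossing and $[\beta_{X'}]$ nef.

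First, gklt gives $\operatorname{mult}_E B'<1$ for every prime $E$. Since $B\geq0$, the non-exceptional part of $B'$ is effective. Hence $A:=\lceil -B'\rceil$ is an effective $\nu$-exceptional divisor.

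Next, I write
\[
A-K_{X'}\equiv_X \{B'\}+\beta_{X'}, \qquad \{B'\}:=A+B'.
\]
Here $\{B'\}$ is a boundary with coefficients in $[0,1)$ and simple normal crossing support. The class $[\beta_{X'}]$ is nef over $X$. Since $\nu$ is bimeromorphic, every class is $\nu$-big. After subtracting a small multiple of an effective exceptional divisor that is $\nu$-anti-ample (available because $\nu$ can be taken projective), and absorbing it into the fractional part, the right-hand side becomes $\nu$-Kähler plus a klt boundary.

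The relative Kawamata--Viehweg vanishing for proper bimeromorphic maps of complex spaces, in Nakayama's or Fujino's analytic form, then gives $R^i\nu_*\mathcal O_{X'}(A)=0$ for $i>0$. Because $A$ is effective and exceptional, $\nu_*\mathcal O_{X'}(A)=\mathcal O_X$ (using normality of $X$). So $\mathcal O_X\to R\nu_*\mathcal O_{X'}$ factors through the quasi-isomorphism $\mathcal O_X\simeq R\nu_*\mathcal O_{X'}(A)$. Hence $\mathcal O_X\to R\nu_*\mathcal O_{X'}$ has a left inverse, and Kovács' splitting criterion, together with Grauert--Riemenschneider vanishing, shows that $X$ has rational singularities.

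The main obstacle is this vanishing step: the class $\beta_{X'}$ is only a nef transcendental $(1,1)$-class, not a divisor. I would first try to use the fact that the statement is local over $X$, so one may shrink to Stein open subsets of $X$. The bimeromorphic morphism $\nu$ is projective over such opens, and there the transcendental part can be replaced, up to $\equiv_X$, by a relatively ample $\mathbb R$-divisor perturbation.

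\textbf{Part (2).} Choose a log resolution $\nu:X'\to M$ of $(M,B_M+\boldsymbol\beta)$ on which $\boldsymbol\beta$ descends. Then $h\circ\nu:X'\to Y$ is a resolution of $Y$ on which $\boldsymbol\beta$ still descends, and
\[
[K_{X'}+B'+\beta_{X'}]=\nu^*h^*\gamma=(h\circ\nu)^*\gamma .
\]
Thus the same $B'$ serves as the structural divisor for the candidate pair $(Y,h_*B_M+\boldsymbol\beta)$, and $(h\circ\nu)_*B'=h_*B_M$ is effective. This shows that $(Y,h_*B_M+\boldsymbol\beta)$ is a generalized pair.

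Its discrepancies are $-\operatorname{mult}_E B'$, exactly as for $M$. Hence every divisor over $Y$ has discrepancy $>-1$, and the pair is gklt.

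For the adjoint class, I push forward the identity above by $h\circ\nu$. Since $h\circ\nu$ is bimeromorphic, $(h\circ\nu)_*(h\circ\nu)^*\gamma=\gamma$, and the traces push forward to $K_Y$, $h_*B_M$ and $\beta_Y$. This gives $[K_Y+h_*B_M+\beta_Y]=\gamma$.

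Finally, rationality of the singularities of $Y$ follows from part (1). Its proof is local over the base, so the Fujiki class $\mathcal C$ hypothesis suffices in place of a global Kähler assumption.
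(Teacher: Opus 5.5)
Your proposal is correct. The paper gives no proof of its own here: both items are quoted from \cite[Lemmas~2.25 and~2.31]{HX}. What you reconstruct is the standard Koll\'ar--Mori argument, namely relative Kawamata--Viehweg vanishing for $\mathcal O_{X'}(\lceil -B'\rceil)$ followed by the splitting criterion, so there is no rival route in the paper to compare it with.

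Two remarks tighten your argument.

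\emph{Part~(1).} The obstacle you flag is not really there. The divisor $N:=-(K_{X'}+B')$ is already an honest $\RR$-divisor on $X'$, and $[N]=[\beta_{X'}]-\nu^*a_X$. Hence $N\cdot C=\int_C\beta_{X'}\ge0$ for every curve $C$ contracted by $\nu$, so $N$ is $\nu$-nef. It is automatically $\nu$-big because $\nu$ is bimeromorphic. Then $\lceil -B'\rceil=K_{X'}+\{B'\}+N$ is exactly the input for the birational form of relative Kawamata--Viehweg. No localization to Stein opens and no anti-ample perturbation are needed. Incidentally, projectivity of $\nu$ alone would not produce an exceptional $\nu$-anti-ample divisor, since a small projective resolution has none. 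If you kept that step, you would want $\nu$ to be a composite of blow-ups.

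\emph{Part~(2).} Push-forward makes sense for the sum current, not termwise on $H^{1,1}_{\BC}(Y,\RR)$, since $K_Y$ and $h_*B_M$ need not be $\RR$-Cartier. The clean statement is that $\gamma$ satisfies the defining identity $(h\circ\nu)^*\gamma=[K_{X'}+B'+\beta_{X'}]$. Such a class is unique because $(h\circ\nu)^*$ is injective by Proposition~\ref{prop:mmp-bc-pullback}, once $Y$ is known to have rational singularities; this is where Fujiki's class $\mathcal C$ is used. Likewise, equality of the discrepancies over $M$ and over $Y$ relies on uniqueness of the boundary on higher models, which is the negativity lemma.
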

	
	\begin{proposition}[{\cite[Lemma~2.39]{HX}}]
		\label{prop:mmp-bc-pullback}
		Let $h:M\to Y$ be a proper surjective holomorphic map with
		connected fibers between normal compact analytic varieties
		with rational singularities. Suppose that $M$ belongs to
		Fujiki's class $\mathcal C$ and
		\[
		R^ih_*\mathcal O_M=0\qquad(i>0).
		\]
		Then
		\[
		h^*:H^{1,1}_{\mathrm{BC}}(Y,\mathbb R)
		\longrightarrow H^{1,1}_{\mathrm{BC}}(M,\mathbb R)
		\]
		is injective. 
	\end{proposition}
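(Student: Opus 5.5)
We would argue with the exact sheaf sequence defining Bott--Chern classes together with the low-degree Leray exact sequence for $h$. Since $H^{1,1}_{\mathrm{BC}}(Y,\mathbb R)=H^1(Y,\PH_Y)$ and $H^{1,1}_{\mathrm{BC}}(M,\mathbb R)=H^1(M,\PH_M)$, with $h^*$ induced by the natural map $\PH_Y\to h_*\PH_M$, the key claim is
\[
h_*\PH_M=\PH_Y .
\]
Granting this, the five-term exact sequence of the Leray spectral sequence for $\PH_M$ begins
\[
0\longrightarrow H^1(Y,h_*\PH_M)\longrightarrow H^1(M,\PH_M)\longrightarrow H^0(Y,R^1h_*\PH_M),
\]
so $h^*:H^1(Y,\PH_Y)\to H^1(M,\PH_M)$ is injective. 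Thus the whole proof reduces to the local identification $h_*\PH_M=\PH_Y$.

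\textbf{Step 1: descent of $u$ to a continuous function.} Fix a small open Stein, contractible $U\subset Y$ and $u\in\PH_M(h^{-1}(U))$. Locally $u=\operatorname{Re} f$ with $f$ holomorphic, so $u$ is pluriharmonic. Since the fibres of $h$ are compact and connected, the maximum principle on each fibre, which is a compact connected analytic space, makes $u$ constant along fibres. Hence $u=v\circ h$ for a function $v$ on $U$. Because $h$ is proper and surjective, $v$ is continuous. On the dense open set where $h$ is a submersion, local holomorphic sections $\sigma$ give $v=u\circ\sigma$, so $v$ is pluriharmonic there.

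\textbf{Step 2: $v$ is a real part of a holomorphic function on $U$.} This is where we use $R^ih_*\mathcal O_M=0$ and the fact that $h_*\mathcal O_M=\mathcal O_Y$, which holds because $Y$ is normal and $h$ has connected fibres. On $h^{-1}(U)$, consider
\[
0\to i\mathbb R\to\mathcal O_M\xrightarrow{\operatorname{Re}}\PH_M\to0 .
\]
Taking cohomology gives
\[
H^0(h^{-1}U,\mathcal O_M)\to H^0(h^{-1}U,\PH_M)\to H^1(h^{-1}U,i\mathbb R)\to H^1(h^{-1}U,\mathcal O_M).
\]
The last group vanishes: $U$ is Stein and $R^1h_*\mathcal O_M=0$, so Leray gives $H^1(h^{-1}U,\mathcal O_M)=H^1(U,\mathcal O_Y)=0$. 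It therefore suffices to show that the image of $u$ in $H^1(h^{-1}U,\mathbb R)$ is zero. In that case $u=\operatorname{Re}F$ with $F\in\mathcal O_M(h^{-1}U)=\mathcal O_Y(U)$, so $v=\operatorname{Re}F\in\PH_Y(U)$.

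\textbf{Main obstacle: killing the topological obstruction.} After shrinking, $h^{-1}(U)$ retracts onto a fibre $F_y$, so the obstruction lies in $H^1(F_y,\mathbb R)$. It is the class of the periods of $d^c u$. We would try two routes, in this order.
\begin{enumerate}
\item \emph{Hodge theory on the fibre.} Use the class $\mathcal C$ hypothesis and vanishing of $H^1(F_y,\mathcal O)$, which follows from $R^1h_*\mathcal O_M=0$ on the normal base, to get that $H^1(F_y,\mathbb R)\to H^1(F_y,\mathcal O)$ is injective. The period class then maps to zero in $H^1(F_y,\mathcal O)$, and is therefore itself zero. The subtle point is this injectivity, because $F_y$ may be singular.
\item \emph{Extension across the singularities of $Y$.} Alternatively, use that $v$ is continuous on $U$ and pluriharmonic on a dense Zariski-open set. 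Pull $v$ back to a resolution $\pi:\widetilde U\to U$, where $R^1\pi_*\mathcal O=0$ by rationality of $Y$. A continuous function that is pluriharmonic off an analytic set extends as a pluriharmonic function across it. Then run the same exact-sequence argument for $\pi$, now on a manifold, to write $v\circ\pi=\operatorname{Re}G$, and descend $G$ to $U$ using $\pi_*\mathcal O_{\widetilde U}=\mathcal O_U$.
\end{enumerate}

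Once $h_*\PH_M=\PH_Y$ is established by either route, the Leray five-term sequence displayed above yields the asserted injectivity of
\[
h^*:H^{1,1}_{\mathrm{BC}}(Y,\mathbb R)\to H^{1,1}_{\mathrm{BC}}(M,\mathbb R).
\]
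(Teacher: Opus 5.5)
The paper gives no proof of its own here; it only cites Hacon--Xie. Your reduction is sound: with $H^{1,1}_{\BC}=H^1(\,\cdot\,,\PH)$, the identity $h_*\PH_M=\PH_Y$ and the five-term Leray sequence do give injectivity. The gap is that the step you call the main obstacle is never closed.

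First, the vanishing $H^1(h^{-1}U,\OO_M)=0$ in Step~2 does not help. By exactness it makes $\PH_M(h^{-1}U)\to H^1(h^{-1}U,i\RR)$ surjective. So under the hypotheses your key claim is equivalent to $R^1h_*\RR=0$, i.e.\ $H^1(F_y,\RR)=0$ for every fibre, which is exactly what has to be proved.

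Route~1 derives this from $H^1(F_y,\OO_{F_y})=0$, which you assert follows from $R^1h_*\OO_M=0$. That is not justified. The theorem on formal functions only controls the inverse limit of $H^1$ of the infinitesimal thickenings of the fibre. Passing to the reduced fibre requires the vanishing of obstructions in $H^2$ of the kernel sheaves, which is automatic only for fibres of dimension at most one. You also leave the injectivity of $H^1(F_y,\RR)\to H^1(F_y,\OO_{F_y})$ unproved.

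Route~2 is circular. On the resolution the same obstruction reappears in $H^1(\widetilde U,\RR)\cong H^1(\pi^{-1}(y),\RR)$, and smoothness of $\widetilde U$ does nothing to it.

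The missing idea is elementary: restrict the connecting class to the reduced fibre.
\begin{itemize}
\item Cover $h^{-1}U$ by opens $W_i$ with $u=\operatorname{Re}f_i$ and $f_i$ holomorphic. Then $\delta(u)\in H^1(h^{-1}U,i\RR)$ is represented by the cocycle $f_j-f_i$, which is locally constant and purely imaginary.
\item By your Step~1, $u|_{F_y}\equiv c$ is constant. So $f_i|_{F_y\cap W_i}-c$ is a holomorphic function with zero real part on a reduced space, hence locally constant. Thus $(f_j-f_i)|_{F_y}$ is a coboundary with values in $i\RR$, and $\delta(u)|_{F_y}=0$.
\item Since $h$ is proper, $H^1(F_y,\RR)=\varinjlim_{U'\ni y}H^1(h^{-1}U',\RR)$. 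So $\delta(u)$ vanishes on $h^{-1}U'$ for some smaller $U'$, and $u=\operatorname{Re}G$ with $G\in\OO_M(h^{-1}U')=\OO_Y(U')$.
\end{itemize}
Equivalently, $H^1(F,\RR)\to H^1(F,\OO_F)$ is injective for every compact connected reduced space $F$ by the maximum principle. The class $\delta(u)|_{F_y}$ lies in its kernel by exactness, whether or not $H^1(F_y,\OO_{F_y})$ vanishes.

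With this step, you no longer need the pluriharmonicity of $v$ on a dense open set, Route~2, Hodge theory on the fibres, or the class $\mathcal C$ hypothesis. The argument uses only properness, connected fibres, and $h_*\OO_M=\OO_Y$.
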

	
	\begin{theorem}
		\label{prop:mmp-existence}
		Let $(X,B+\boldsymbol\beta)$ be a compact K\"ahler gklt pair,
		and put $a_X=[K_X+B+\beta_X]$.
		\begin{enumerate}
			\item If $a_X$ is big, or if $a_X$ is pseudoeffective and
			$B+\beta_X$ is big, then the pair admits a good log terminal
			model \cite[Theorem~1.1(2)]{HX}.
			\item If $X$ is strongly $\mathbb Q$-factorial,
			$B+\beta_X$ is big, and $\omega$ is a modified K\"ahler class
			such that $a_X+\omega$ is nef, then an $a_X$-MMP with scaling
			of $\omega$ terminates with a good log terminal model or a
			Mori fiber space \cite[Theorem~1.5]{HX}.
		\end{enumerate}
	\end{theorem}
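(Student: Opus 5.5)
Both assertions are the cited results of Hacon--Xie, restated in the notation fixed above. The plan is therefore not to re-prove the MMP but to check that our definitions agree with those of \cite{DHY,HX}, and then to invoke \cite[Theorem~1.1(2)]{HX} and \cite[Theorem~1.5]{HX} verbatim.

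\textbf{Checking the dictionary.} First I would compare our notions of generalized pair, gklt, adjoint class $a_X=[K_X+B+\beta_X]$, big, modified Kähler, semiample, and (good) log terminal model with the corresponding definitions in \cite[Definitions~2.7, 2.9, 2.11]{DHY} and \cite[Definitions~2.8, 2.15, 2.16]{HX}. Each of ours was transcribed from there, so this is bookkeeping. Two points need care:
\begin{itemize}
\item Our ``good'' means that $a_M$ is semiample in the sense of a Kähler class pulled back from a compact analytic variety. This is exactly the output of \cite[Theorem~1.1]{HX}.
\item Bigness of $B+\beta_X$ is understood as a Bott--Chern class containing a Kähler current, which is consistent with Remark~\ref{rem:mmp-bigness}.
\end{itemize}

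\textbf{Part (1).} Given a compact Kähler gklt pair with $a_X$ big, or with $a_X$ pseudoeffective and $B+\beta_X$ big, \cite[Theorem~1.1(2)]{HX} yields a log terminal model $\phi:X\dashrightarrow M$ with $a_M$ semiample. I would note that this is a log terminal model in our sense: $\phi$ is a bimeromorphic contraction, $(M,\phi_*B+\boldsymbol\beta)$ is strongly $\mathbb Q$-factorial and gklt, and the discrepancy inequality is strict on $\phi$-exceptional divisors. Hence it is a good log terminal model here as well.

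\textbf{Part (2).} Under the stated hypotheses ($X$ strongly $\mathbb Q$-factorial, $B+\beta_X$ big, $\omega$ modified Kähler with $a_X+\omega$ nef), \cite[Theorem~1.5]{HX} runs the $a_X$-MMP with scaling of $\omega$. Its terminal output is either a good log terminal model or a Mori fiber space, which is the claimed statement.

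The only real obstacle is making sure no hidden hypothesis is dropped. The most likely candidates are relative compactness of the base, Kählerness of the intermediate models, and the precise meaning of ``modified Kähler'' for the scaling class. For the last, I would check that our definition (push-forward of a Kähler form having local potentials) coincides with the one used in \cite[Theorem~1.5]{HX}. If there is any mismatch, I would restrict the statement to the case $\omega=[\mu_*\omega_V]$ used later in the paper.
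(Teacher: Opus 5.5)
Your proposal is correct and matches the paper, which gives no argument of its own: the statement simply transcribes \cite[Theorem~1.1(2)]{HX} and \cite[Theorem~1.5]{HX}. The paper's definitions of generalized pair, gklt, modified K\"ahler class (\cite[Definition~2.8]{HX}), semiampleness and good log terminal model are taken directly from \cite{DHY,HX}, so your dictionary check goes through, and the fallback in your last paragraph is not needed.
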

	
	\begin{theorem}[Transcendental base-point-free theorem,
		{\cite[Theorem~1.4]{HX}}]
		\label{thm:mmp-basepointfree}
		Let $(M,B_M+\boldsymbol\beta)$ be a compact K\"ahler gklt pair.
		Suppose that
		\[
		a_M=[K_M+B_M+\beta_M]\text{ is nef},
		\qquad B_M+\beta_M\text{ is modified big}.
		\]
		Then there are a Moishezon contraction $h:M\to Y$ onto a
		compact K\"ahler space and a K\"ahler class
		$\gamma\in H^{1,1}_{\mathrm{BC}}(Y,\mathbb R)$ such that
		\[
		a_M=h^*\gamma.
		\]
	\end{theorem}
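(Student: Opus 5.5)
Since this is \cite[Theorem~1.4]{HX}, the plan is to reconstruct the argument along the lines of the classical Kawamata--Shokurov base-point-free theorem. Cohomological non-vanishing is not available for transcendental classes. In its place I would use Collins--Tosatti's description of the non-K\"ahler locus together with the relative projectivity supplied by the cone and contraction theorems.

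\textbf{Step 1 (making the boundary strictly positive).} Pass to a log resolution $\nu:M'\to M$ on which $\boldsymbol\beta$ descends, so that $\beta_{M'}$ is nef. Since $B_M+\beta_M$ is modified big, a Kodaira-lemma argument should let me write $\beta_{M'}$, up to a small effective divisor, as a K\"ahler current with analytic singularities. Concretely, I aim for
\[
\beta_{M'}\equiv \omega'+\Delta',\qquad \omega'\ \text{K\"ahler on a higher model},\quad \Delta'\geq0 ,
\]
with the coefficients of $\Delta'$ small enough that $(M',B'+\Delta')$ remains klt. This gklt condition is where the hypothesis enters. Pushing forward, $a_M$ is then of the form $K_M+B_M+\Delta_M+(\text{modified K\"ahler})$. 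In particular, $a_M$ is nef and $a_M-(K_M+B_M+\Delta_M)$ is positive in the modified sense, which is the Kähler analogue of ``nef and log big''.

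\textbf{Step 2 (the contraction when $a_M$ is big).} If $a_M$ is nef and big, its non-K\"ahler locus $E_{nK}(a_M)=\operatorname{Null}(a_M)$ is a proper analytic subset, by Collins--Tosatti. Every curve $C$ with $a_M\cdot C=0$ lies in this null locus, and for such $C$ we have $(K_M+B_M+\Delta_M)\cdot C<0$ by Step 1. The cone theorem for Kähler gklt pairs then exhibits the face $a_M^\perp$ of the cone of curves as spanned by finitely many $(K_M+B_M+\Delta_M)$-negative extremal rays. Their contraction $h:M\to Y$ is Moishezon, being relatively projective, and $a_M$ descends to a class $\gamma$ on $Y$. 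Descent uses Proposition~\ref{prop:mmp-bc-pullback}: the vanishing $R^ih_*\mathcal O_M=0$ comes from relative Kawamata--Viehweg vanishing, and the rational singularities come from Proposition~\ref{prop:mmp-rational}. Finally, $\gamma$ is positive on every curve of $Y$ and has positive top self-intersection on every subvariety, so the Demailly--P\u aun criterion (in the singular version of Das--Hacon--P\u aun) gives that $\gamma$ is K\"ahler.

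\textbf{Step 3 (the case $a_M$ not big).} In general $a_M$ need not be big. Here I would add a small multiple of a modified K\"ahler class to $a_M$ and run an $a_M$-MMP with scaling, using Theorem~\ref{prop:mmp-existence}(2), to reach either a good model or a Mori fiber space. An alternative is to argue by induction on dimension through the Iitaka-type fibration defined by the numerical dimension of $a_M$: on its general fibers $a_M$ should be numerically trivial, and the fibration should be algebraic relative to its base by the bigness of the boundary. I expect the main obstacle to be exactly this non-big case. The difficulty is producing $Y$ as a Kähler space and showing that $\gamma$ is Kähler there, rather than merely that $a_M$ is trivial on a family of curves. To do this I would first try the canonical bundle formula for generalized pairs: it transfers the problem to a gklt generalized pair on the base, where $a_M$ becomes big, and Step 2 then applies.
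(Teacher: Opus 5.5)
The paper does not prove this statement. It is imported from \cite[Theorem~1.4]{HX} and used as a black box in Step~1 of the proof of Theorem~\ref{prop:model}, so your outline has to stand on its own, and as written it does not. Step~1 is misstated but repairable. Only $B_M+\beta_M$ is assumed modified big, so $\beta_{M'}$ itself need not be big (take $\boldsymbol\beta=0$ and $B_M$ big). You must apply the Kodaira-type decomposition to $B'+\beta_{M'}$, modulo $\nu$-exceptional divisors, and trade only an $\epsilon$-portion of it for a K\"ahler class $\epsilon\omega'$; for $0<\epsilon\ll1$ the pair stays gklt.

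The genuine gap is Step~2. After pushing forward, the nef part on $M$ is $\nu_*\omega'$. This class is only modified K\"ahler, and such classes can be negative on curves inside $\nu(\operatorname{Exc}\nu)$, for instance flipping curves. So your claim that every $a_M$-trivial curve is $(K_M+B_M+\Delta_M)$-negative fails. In the projective theory one avoids this by writing the nef and big part as an effective divisor via Kodaira's lemma, which a transcendental class does not allow. More fundamentally, you then assume a cone-and-contraction theorem for the face $a_M^{\perp}$ of a K\"ahler gklt pair in arbitrary dimension, yielding a Moishezon contraction onto a K\"ahler space. That contraction is exactly what a base-point-free theorem is supposed to produce. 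Even projectively, the contraction theorem is deduced from Kawamata--Shokurov via non-vanishing and Kawamata--Viehweg vanishing, and here there are no sections to work with. Step~2 is therefore circular.

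The same hidden assumption enters your use of the singular Demailly--P\u aun criterion. To get $\gamma^{\dim W}\cdot W>0$ for every $W\subset Y$, you need $h$ to contract every subvariety of $\operatorname{Null}(a_M)$, so that the null locus is swept out by positive-dimensional fibres of $h$. Knowing the $a_M$-trivial curves does not give this: a nef class can have zero top self-intersection on a subvariety that carries no trivial curves at all.

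Step~3 contains no argument. An $a_M$-MMP with scaling that starts from an already nef $a_M$ stops at once. The assertion in Theorem~\ref{prop:mmp-existence}(2) that its output is a \emph{good} model is then literally the semiampleness you are trying to prove. That theorem also needs $M$ strongly $\mathbb Q$-factorial, which is not assumed. So this step cites an equivalent statement rather than proving it. A nef transcendental class has no Iitaka or numerical-dimension fibration a priori: the existence of such a fibration is the conclusion $a_M=h^*\gamma$, and a canonical bundle formula needs the fibration as input. In both the big and the non-big case, what is missing is a mechanism that produces the contraction $h$ and the K\"ahler class $\gamma$ from the hypotheses. Quoting \cite{HX} is legitimate inside this paper, but your outline does not supply a proof.
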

	
	\begin{theorem}[Extension of MMP sequences, {\cite[Theorem~7.7]{HX}}]
		\label{lem:mmp-extension}
		Let $(X/S,B+\boldsymbol{\beta})$ be a generalized pair and $g:X\to S$ be flat, proper, with connected fibers where $S$ is a connected, smooth, relatively compact curve.
		Fix $0\in S$ and write $X_0=g^{-1}(0)$, with
		$X_0\not\subset\operatorname{Supp}B$.
		Suppose the generalized pair $(X_0,B_0+\boldsymbol{\beta}_0)$ is K\"ahler and has canonical singularities, where
		\[
		\left.
		\bigl(K_X+X_0+B+\boldsymbol{\beta}_X\bigr)
		\right|_{X_0}
		=
		K_{X_0}+B_0+\boldsymbol{\beta}_0
		\]
		and that
		\[
		\lfloor B_0\rfloor=0,
		\qquad
		N\bigl(K_{X_0}+B_0+\boldsymbol{\beta}_0\bigr)
		\wedge B_0=0.
		\]
		For any sequence of transcendental MMP steps
		\[
		(X_0,B_0+\boldsymbol{\beta}_0)
		\dashrightarrow
		(X_0^{(1)},B_0^{(1)}+\boldsymbol{\beta}_0^{(1)})
		\dashrightarrow
		(X_0^{(2)},B_0^{(2)}+\boldsymbol{\beta}_0^{(2)})
		\dashrightarrow\cdots,
		\]
		there is an open neighborhood $U\subset S$ containing $0$
		on which this sequence extends to
		\[
		(X,B+\boldsymbol{\beta})/U
		\dashrightarrow
		(X^{(1)},B^{(1)}+\boldsymbol{\beta}^{(1)})/U
		\dashrightarrow
		(X^{(2)},B^{(2)}+\boldsymbol{\beta}^{(2)})/U
		\dashrightarrow\cdots,
		\]
		a sequence of proper meromorphic maps which are
		$(K_X+B+\boldsymbol{\beta}_X)$-negative.
	\end{theorem}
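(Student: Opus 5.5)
We will argue by induction on the number of steps, so it is enough to extend a single transcendental MMP step $(X_0,B_0+\boldsymbol\beta_0)\dashrightarrow(X_0^{(1)},B_0^{(1)}+\boldsymbol\beta_0^{(1)})$. We must then check that the hypotheses are reproduced on the new central fiber. Canonical singularities, $\lfloor B_0^{(1)}\rfloor=0$ and $N\wedge B_0^{(1)}=0$ should all pass to the next step. The reason is that a $(K+B+\boldsymbol\beta)$-negative step does not decrease discrepancies (Proposition~\ref{prop:mmp-discrepancies}). Moreover, the condition $N(K_{X_0}+B_0+\boldsymbol\beta_0)\wedge B_0=0$ guarantees that no component of $B_0$ is contracted, so $B_0^{(1)}$ is the strict transform of $B_0$.

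\emph{Step 1: singularities near $X_0$.} Since $(X_0,B_0+\boldsymbol\beta_0)$ is canonical and $X_0$ is a Cartier fiber, inversion of adjunction for generalized pairs should show the following. After shrinking $S$ to a neighborhood $U$ of $0$, the pair $(X,X_0+B+\boldsymbol\beta)$ is plt near $X_0$, and $(X,B+\boldsymbol\beta)/U$ is gklt with canonical fibers. In particular $X$ has rational singularities by Proposition~\ref{prop:mmp-rational}, and the same holds for the nearby fibers $X_t$.

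\emph{Step 2: extending the contraction.} The step on $X_0$ is given by a contraction $h_0:X_0\to Y_0$ with $a_{X_0}+\lambda\omega_0=h_0^*\gamma_0$, where $\gamma_0$ is K\"ahler on $Y_0$; this comes from the base-point-free theorem, Theorem~\ref{thm:mmp-basepointfree}. We first extend the supporting class $a_{X_0}+\lambda\omega_0$ to a class on $X/U$ that is nef over $U$ near $0$, using the vanishing $R^ip_*\mathcal O=0$ coming from rational singularities to lift Bott--Chern classes. We then apply a relative version of Theorem~\ref{thm:mmp-basepointfree} over $U$ to get $h:X\to Y$ over $U$. Finally we show that $h|_{X_0}$ factors through $h_0$ and coincides with it. Here we would use Proposition~\ref{prop:mmp-bc-pullback}: injectivity of $h^*$ on $H^{1,1}_{\mathrm{BC}}$ identifies the classes, and the relative dimension of the exceptional locus is upper semicontinuous in $t$.

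\emph{Step 3: divisorial contraction or flip.} If $h_0$ contracts a divisor, its extension $h$ contracts a divisor of $X$ that is flat over $U$, and we take $X^{(1)}=Y$. If $h_0$ is small, we construct the relative flip $X^{(1)}=\operatorname{Proj}_Y\bigoplus_m h_*\mathcal O(m(K_X+B+\beta_X))$. Its existence follows by running Theorem~\ref{prop:mmp-existence}(1) relatively over $Y$; the contraction $h$ is Moishezon, so the relative setting is essentially projective. Negativity of the extended map follows from the negativity lemma applied on a common resolution, as in \eqref{eq:mmp-non-positive}.

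\emph{Main obstacle.} The hardest point is to show that the restriction of the relative flip (or contraction) to the central fiber is exactly the given step on $X_0$. This amounts to surjectivity of the restriction maps $h_*\mathcal O(m(K_X+X_0+B+\beta_X))\to h_{0*}\mathcal O(m(K_{X_0}+B_0+\boldsymbol\beta_0))$, which is a relative extension statement. We would first try a relative Kawamata--Viehweg vanishing for $h$, which is available because $h$ is Moishezon and the pair is plt. Failing that, we would use an Ohsawa--Takegoshi argument in the spirit of Theorem~\ref{thm: ext} over $h$-relatively compact sets. It is at this point that the hypotheses $\lfloor B_0\rfloor=0$ and $N\wedge B_0=0$ are really needed, since otherwise multiplier ideals along $B_0$ could obstruct the extension.
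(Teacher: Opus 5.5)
The paper gives no proof of this statement; it is imported from \cite[Theorem~7.7]{HX}, so your outline has to stand on its own. Its shape (induction on steps, extend the contraction, take the relative flip over $Y$, identify its central fiber) is the standard projective strategy of de Fernex--Hacon. You correctly isolate the key statement, but you leave it unproved, and the first tool you propose for it cannot work.

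Put $L_m=m(K_X+X_0+B+\beta_X)$. Since $X_0$ is $h$-trivial, $L_m-X_0-(K_X+B+\beta_X)\equiv_h(m-1)(K_X+B+\beta_X)$. This is $h$-anti-ample, because $-(K_X+B+\beta_X)$ is $h_0$-ample on $X_0$ and relative ampleness is open. So relative Kawamata--Viehweg vanishing gives no control of $R^1h_*\mathcal O_X(L_m-X_0)$ for $m\ge2$, and hence none of the surjectivity of $h_*\mathcal O_X(L_m)\to h_{0*}\mathcal O_{X_0}(L_m|_{X_0})$.

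What your argument needs is a relative extension theorem of Hacon--McKernan type for the generalized pair over $Y$ (where $h$ is locally projective), giving this surjectivity for all sufficiently divisible $m$. This is where canonicity of $(X_0,B_0+\boldsymbol\beta_0)$ and $\lfloor B_0\rfloor=0$ actually enter. Even stating it requires first replacing $\beta_X$, locally over $Y$, by an $h$-equivalent $\mathbb R$-divisor. Granting it, $\operatorname{Proj}$ commutes with restriction to $Y_0$, so the central fiber of $\operatorname{Proj}_Y\bigoplus_m h_*\mathcal O_X(L_m)$ is $\operatorname{Proj}_{Y_0}$ of the full restricted algebra, i.e.\ $X_0^{(1)}$. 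Your Ohsawa--Takegoshi fallback names this ingredient without supplying it.

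The same input is needed in the divisorial case. Nothing you say shows that the exceptional divisor of $h_0$ spreads out to a divisor of $X$ flat over $U$. A priori $\operatorname{Exc}(h)$ could be that divisor of $X_0$ alone, of codimension two in $X$; then $h$ is small and has to be flipped. Note that the theorem only asserts that the extended maps are $(K_X+B+\boldsymbol\beta_X)$-negative, not that step types are preserved. Taking the relative log canonical model over $Y$ in both cases, and identifying its central fiber through the extension theorem, handles both uniformly.

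Step~2 also rests on an incorrect justification. Rational singularities give vanishing of higher direct images along resolutions (or along $h$), not along $g:X\to U$. They do not let you lift a Bott--Chern class from $X_0$ to $X$: a $(1,1)$-class on $X_0$ need not stay of type $(1,1)$ on nearby fibers. The obstruction is the image of its topological extension in $H^2(X,\mathcal O_X)$, which dies on $X_0$ but need not vanish. Even when the class lifts, nefness over $U$ does not follow, since nefness is not open in families. Moreover, Theorem~\ref{thm:mmp-basepointfree} is only stated for compact pairs. Unless the MMP is run with scaling of a class coming from $X$, nothing supplies a global supporting class.

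The robust route is the one in the paper's Lemma~\ref{lem:descent}: deform the morphism first via \cite[Proposition~11.4]{KM}. This needs only $h_{0*}\mathcal O_{X_0}=\mathcal O_{Y_0}$ and $R^1h_{0*}\mathcal O_{X_0}=0$. The latter is where relative Kawamata--Viehweg vanishing genuinely helps, since $-(K_{X_0}+B_0+\boldsymbol\beta_0)$ is $h_0$-ample. Then deduce $R^ih_*\mathcal O_X=0$ and descend classes to $Y$. This also makes the identification $h|_{X_0}=h_0$ automatic.
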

	
	\begin{theorem}[{\cite[Corollary~7.8]{HX}}]
		\label{cor:mmp-psef-family}
		Let $p:X\to S$ be a proper holomorphic submersion with connected
		fibers, where $X$ is a K\"ahler manifold and $S$ is a smooth
		connected relatively compact curve. For $0\in S$,
		\[
		K_{X_0}\text{ is pseudoeffective}
		\quad\Longleftrightarrow\quad
		K_{X_t}\text{ is pseudoeffective for every }t\in S.
		\]
	\end{theorem}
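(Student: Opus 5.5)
The implication from right to left is trivial: take $t=0$. For the other direction I would show that the set
\[
A=\{t\in S:\ K_{X_t}\text{ is pseudoeffective}\}
\]
is nonempty, closed and open in $S$. Since $S$ is connected, this gives $A=S$. By hypothesis $0\in A$.

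\emph{Closedness.} Let $t_j\to t$ with $t_j\in A$, and pick closed positive currents $T_j\in c_1(K_{X_{t_j}})$. Using a smooth trivialization of $p$ near $X_t$ (Ehresmann), the masses $\int_{X_{t_j}}T_j\wedge\omega^{n-1}$ are cohomological, hence bounded. Weak compactness then gives a closed positive limit current on $X_t$. Its class is $c_1(K_{X_t})$, because $\theta_\omega$ restricts to a smooth family of representatives. This step only uses standard pluripotential theory.

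\emph{Openness.} By translating the base point, it suffices to treat $t=0$. I would use a single MMP with scaling on the central fiber and then extend it with Theorem~\ref{lem:mmp-extension}.

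1. Fix a small $\varepsilon_0>0$ and consider the compact K\"ahler gklt pair $(X_0,0+\overline{\varepsilon_0\omega_0})$. Here $X_0$ is smooth, so the pair is canonical, $B_0=0$, and $\beta_{X_0}=\varepsilon_0\omega_0$ is K\"ahler and hence big. Since $K_{X_0}$ is pseudoeffective, the adjoint class $K_{X_0}+\varepsilon_0\omega_0$ is big.

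2. Run the $K_{X_0}$-MMP with scaling of $\omega_0$, using Theorem~\ref{prop:mmp-existence}(2). Pseudoeffectivity of $K_{X_0}$ rules out a Mori fiber space for every scaling value $\varepsilon>0$. Termination gives finitely many steps $X_0\dashrightarrow X_0^{(1)}\dashrightarrow\cdots\dashrightarrow X_0^{(N)}$ and scaling thresholds $\varepsilon_0\geq\lambda_1\geq\cdots\geq\lambda_N$. For each $\varepsilon\in(\lambda_{k+1},\lambda_k]$, the class $K+\varepsilon\omega$ on $X_0^{(k)}$ is nef. The key point is that one finite sequence serves all $\varepsilon$ in the range at once.

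3. Apply Theorem~\ref{lem:mmp-extension} to the generalized pair $(X/S,\overline{\varepsilon_0\omega})$. Its hypotheses hold: $\lfloor B_0\rfloor=0$ and $N(\cdot)\wedge B_0=0$ trivially, since $B_0=0$. This extends the finite sequence over a neighborhood $U\ni0$ that is independent of $\varepsilon$, by negative bimeromorphic maps $X\dashrightarrow X^{(k)}/U$.

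4. For $t\in U$ and $\varepsilon\in(\lambda_{k+1},\lambda_k]$, I would compare classes through Proposition~\ref{prop:mmp-discrepancies}(2). On a common resolution we have $r^*(K+\varepsilon\omega)=s^*a^{(k)}_\varepsilon+E$ with $E\geq0$. So $K_{X_t}+\varepsilon\omega_t$ is pseudoeffective as soon as $a^{(k)}_{\varepsilon}|_{X^{(k)}_t}$ is. Letting $\varepsilon\to0$ and using closedness of the pseudoeffective cone then gives $t\in A$.

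\emph{Main obstacle.} The hardest part is step 4: we only know that $a^{(k)}_\varepsilon$ is nef on the central fiber $X^{(k)}_0$, and this has to be propagated to $X^{(k)}_t$ uniformly for $\varepsilon$ in the closed intervals. I would try the transcendental base-point-free theorem (Theorem~\ref{thm:mmp-basepointfree}) at $t=0$ to write $a^{(k)}_\varepsilon|_{X^{(k)}_0}=h^*\gamma$ with $\gamma$ K\"ahler. I would then deform the Moishezon contraction $h$ over $U$, using the vanishing $R^ih_*\mathcal O=0$ and Proposition~\ref{prop:mmp-bc-pullback}. Kählerness of $\gamma$ is open in families, so the pulled-back class stays nef on nearby fibers.

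If uniformity in $\varepsilon$ fails at the thresholds $\lambda_k$, there is a fallback. Bigness of $K_{X_t}+\varepsilon\omega_t$ for $t$ near $0$ follows from constancy of the top self-intersection, which is the volume of a nef class in a flat family. This might only give a neighborhood depending on $\varepsilon$. In that case I would use finitely many fixed values of $\varepsilon$, one inside each interval, together with convexity of the pseudoeffective cone.
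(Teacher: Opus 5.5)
The paper gives no proof of this statement. It quotes it from Hacon--Xie, and the introduction attributes the same fact to Cao--P\u aun, so your argument has to stand on its own.

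Closedness is fine in substance. The weak limit must be taken for the $T_j$ viewed as currents on $X$, or for normalized potentials in holomorphic product charts using the uniform $L^1$ bound of Theorem~\ref{thm:DNGG uniform Skoda}, because an Ehresmann diffeomorphism does not preserve positivity.

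\textbf{The gap is in openness, at Step~2}, and it comes before the threshold issue you flag. Theorem~\ref{prop:mmp-existence}(2) needs $B+\beta_X$ big. So the MMP it lets you run to the end is that of the adjoint class $K_{X_0}+\varepsilon_0\omega_0$ with scaling, and along it the coefficient of $\omega_0$ never drops below $\varepsilon_0$. The sequence you actually use, with intervals $(\lambda_{k+1},\lambda_k]$ reaching down to $0$, is the $K_{X_0}$-MMP with scaling, for which $B+\beta_X=0$.

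\begin{itemize}
\item A finite such sequence ends on a model with $K$ nef, so it is a minimal model of $X_0$. Existence of minimal models for merely pseudoeffective $K$ is open, already for projective manifolds.
\item Step~3 cannot extend this sequence through the single pair $\overline{\varepsilon_0\omega}$. A step with threshold $\lambda_k<\varepsilon_0$ contracts a ray on which $K+\varepsilon_0\omega$ is \emph{positive}.
\item What the available results give is, for each $\varepsilon$, a different finite MMP and a neighborhood $U_\varepsilon$. This neighborhood depends on that MMP and on the K\"ahler class $\gamma$ of Lemma~\ref{lem:descent}, and the $U_\varepsilon$ may shrink to $\{0\}$. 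So letting $\varepsilon\to0$ in Step~4 is not justified.
\item Your fallback presupposes finitely many intervals reaching $0$. Even on a minimal model, the last interval abuts $\varepsilon=0$, where Theorem~\ref{thm:mmp-basepointfree} does not apply because the boundary is not big. Uniformity fails there as well.
\end{itemize}

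\textbf{A way around this} is to avoid uniformity in $\varepsilon$ altogether and use the connectedness of $S$ once for each fixed $\varepsilon$.

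\begin{enumerate}
\item Fix $\varepsilon>0$. Let $B_\varepsilon$ be the set of $t\in S$ for which $c_1(K_{X_t})+\varepsilon[\omega_t]$ is big, and let $V_\varepsilon(t)$ be its volume.
\item Apply the proof of Theorem~\ref{prop:model} with $\beta=\varepsilon\omega$ at an arbitrary base point $t_0\in B_\varepsilon$; that proof uses bigness only at the base point. It shows that $B_\varepsilon$ is open and that $V_\varepsilon$ is locally constant on it.
\item Let $C$ be the component of $B_\varepsilon$ containing $0$, so $V_\varepsilon\equiv v>0$ on $C$. Let $t_j\in C$ converge to $t_\infty\in S$.
\item By your closedness step, $c_1(K_{X_{t_\infty}})+\varepsilon[\omega_{t_\infty}]$ is pseudoeffective. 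Hence $t_\infty\in B_{\varepsilon'}$ for every $\varepsilon'>\varepsilon$.
\item Take a disc about $t_\infty$ contained in $B_{\varepsilon'}$. Local constancy of $V_{\varepsilon'}$ on it gives $V_{\varepsilon'}(t_\infty)=V_{\varepsilon'}(t_j)\geq V_\varepsilon(t_j)=v$ for large $j$.
\item Let $\varepsilon'\downarrow\varepsilon$ and use continuity of the volume on $H^{1,1}(X_{t_\infty},\mathbb R)$ (Boucksom). This yields $V_\varepsilon(t_\infty)\geq v>0$, so $t_\infty\in B_\varepsilon$ and therefore $t_\infty\in C$.
\end{enumerate}

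Thus $C$ is open and closed, so $C=S$. Since $\varepsilon>0$ was arbitrary, $K_{X_t}$ is pseudoeffective for every $t$.
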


	With these preparations, we can prove the constancy of $\vol([\alpha_t])$ in Theorem~\ref{thm:variation} and resolve the singularity of big classes $[\alpha_t]$ by an appropriate bimeromorphic morphism $\mu:Z\to X$.
	We need the following extension result.
	\begin{lemma}\label{lem:descent}
		Let  $h_0:M_0\to Y_0$ be a bimeromorphic morphism between compact connected normal K\"ahler spaces with rational singularities.
		Let $\pi_M:M\to\Delta$ be a proper flat morphism with central fiber $M_0$ and $M$ is normal.
		Suppose
		$a\in H^{1,1}_{\BC}(M,\RR)$ satisfies $a|_{M_0}=h_0^*\gamma_0$ for a
		K\"ahler class $\gamma_0$ on $Y_0$. 
		After shrinking $\Delta$ about $0$, there exists
		a proper flat deformation $\pi_Y:Y\to\Delta$ with normal total space and normal
		fibers, a proper bimeromorphic holomorphic map $h:M\to Y$ over $\Delta$ extending
		$h_0$, and a K\"ahler class $\gamma\in H^{1,1}_{\BC}(Y,\RR)$ such that
		$h^*\gamma=a$ and $\gamma|_{Y_0}=\gamma_0$.
	\end{lemma}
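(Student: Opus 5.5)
Our plan is to build $Y$ as a relative ``Proj''-type or Stein-factorization target, as in the deformation theory of contractions due to Kollár--Mori, and in the analytic setting Horing--Peternell and Hacon--Xie. The argument runs through cohomology vanishing for $h_0$.

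\textbf{Step 1: vanishing for $h_0$.} Since $h_0$ is bimeromorphic between normal spaces with rational singularities, we have $h_{0*}\mathcal O_{M_0}=\mathcal O_{Y_0}$ and $R^ih_{0*}\mathcal O_{M_0}=0$ for $i>0$. To see this, take a common resolution $W_0\to M_0$. Then $R(\text{res})_*\mathcal O_{W_0}=\mathcal O_{M_0}$, and likewise for $W_0\to Y_0$. The Leray spectral sequence then gives the vanishing.

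\textbf{Step 2: deform $Y_0$ and $h_0$.} The vanishing $R^1h_{0*}\mathcal O_{M_0}=0$ is the standard input for the statement that a contraction deforms along with its source. Concretely, we would invoke the result of Horing--Peternell / Kollár--Mori type (or the version used in \cite[Section~7]{HX} in proving Theorem~\ref{lem:mmp-extension}): after shrinking $\Delta$, there is a flat family $Y\to\Delta$ with central fiber $Y_0$ and a morphism $h:M\to Y$ over $\Delta$ extending $h_0$ with $h_*\mathcal O_M=\mathcal O_Y$.

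An alternative route is to define $Y$ as the analytic space obtained by contracting, in $M$, the deformations of the curves contracted by $h_0$. The $h_0$-exceptional locus does not move off into nonexceptional positions because $a$ is $h$-trivial on those curves, and it stays trivial since $a$ is a global class. Normality of $Y$ follows from $h_*\mathcal O_M=\mathcal O_Y$ and the normality of $M$. Normality of $Y_t$ follows by the same argument fiberwise, using flatness and base change $h_{t*}\mathcal O_{M_t}=\mathcal O_{Y_t}$, which is where $R^1=0$ enters. Finally, $h$ is bimeromorphic because $h_0$ is, together with properness and semicontinuity of fiber dimension.

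\textbf{Step 3: descend the class.} Next we show that $a=h^*\gamma$ for some $\gamma$. We use $H^{1,1}_{\BC}=H^1(\cdot,\PH)$ and the exact sequence $0\to\mathbb R\to\mathcal O\to\PH\to0$ (via real parts, up to $i\mathbb R$). Because $R^ih_*\mathcal O_M=0$ for $i>0$ after shrinking (by upper semicontinuity from the central fiber), $R^1h_*\PH_M$ is controlled by $R^2h_*\mathbb R$.

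The class $a$ restricted to $h$-fibers is trivial. On the central fiber this holds because $a|_{M_0}=h_0^*\gamma_0$. On nearby fibers it holds because the fibers of $h$ deform those of $h_0$ and the restriction is locally constant. So $a$ lies in the image of $H^1(Y,h_*\PH_M)=H^1(Y,\PH_Y)$ in the Leray sequence. We choose $\gamma$ accordingly. Injectivity of $h_0^*$ (Proposition~\ref{prop:mmp-bc-pullback}) then forces $\gamma|_{Y_0}=\gamma_0$.

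\textbf{Step 4: the class is K\"ahler.} Finally, $\gamma$ is K\"ahler after shrinking, because Kählerness is an open condition in flat proper families with a Kähler central fiber class. Choose a smooth representative of $\gamma$ that is strictly positive (in the local-embedding sense) near $Y_0$; this openness follows from the relative stability results of Kodaira--Spencer type for normal spaces used in \cite{HX}. Since $\pi_Y$ is proper, the positivity then persists on $\pi_Y^{-1}$ of a small disk.

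\textbf{Main obstacle.} The hardest point is Step 2, the existence of the deformation $Y$ and the extension $h$. We would rely on the vanishing $R^1h_{0*}\mathcal O=0$ and cite the relative contraction theorem for Kähler families in \cite{HX}, rather than reproving it.
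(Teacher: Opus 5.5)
Your proposal is correct in outline, but it takes a different route from the paper at the key step, the descent of $a$ (your Step 3). Steps 1, 2 and 4 run parallel to the paper: Koll\'ar--Mori's Proposition~11.4, normality from $h_*\OO_M=\OO_Y$, injectivity of $h_0^*$, and a positivity statement near $Y_0$.

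\textbf{How the paper descends $a$.} The argument is global. The vanishing $R^ih_*\OO_M=0$ comes from $0\to\OO_M\xrightarrow{t}\OO_M\to\OO_{M_0}\to0$ and Nakayama, not from ``semicontinuity''. It gives $h^*\colon H^i(Y,\OO_Y)\cong H^i(M,\OO_M)$ for $i=1,2$. Since $Y$ and $M$ retract onto $Y_0$ and $M_0$, one gets $c\in H^2(Y,\RR)$ with $h^*c=j_M(a)$. A four-term diagram chase in $H^1(\OO)\to H^{1,1}_{\BC}\to H^2(\RR)\to H^2(\OO)$ then finishes.

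\textbf{Your route.} You use the Leray sequence of $h$ for $\PH_M$, together with $R^1h_*\PH_M\cong R^2h_*\RR_M$, and check that $a$ dies in $H^0(Y,R^2h_*\RR_M)$. This works, but you must supply two points you gloss over.

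First, $h_*\PH_M=\PH_Y$ is not formal. Pushing forward $0\to i\RR\to\OO_M\to\PH_M\to0$ gives $h_*\PH_M/\PH_Y\cong R^1h_*\RR_M$, so you need $R^1h_*\RR_M=0$. This holds because $R^1h_*\RR_M\to R^1h_*\OO_M=0$ is injective. Indeed, if $c_{ij}=g_i-g_j$ with $g_i$ holomorphic, the $\operatorname{Im}g_i$ glue to a pluriharmonic function. It is constant on the compact connected fibre $F=h^{-1}(y)$, so the $g_i|_F$ are locally constant and $c|_F=0$ in $H^1(F,\RR)=(R^1h_*\RR_M)_y$.

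Second, the vanishing of $j_M(a)|_{h^{-1}(y)}$ for $y\notin Y_0$ should not rest on the fibres of $h$ deforming those of $h_0$, since exceptional curves of $h_0$ need not deform. Instead, for $y_0\in Y_0$ and $V\ni y_0$ small, $h^{-1}(V)$ retracts onto $h^{-1}(y_0)$, so $j_M(a)|_{h^{-1}(V)}=0$. Finitely many such $V$ cover $Y$ after shrinking $\Delta$.

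\textbf{What each route buys.} Yours yields a fibrewise descent criterion in the spirit of H\"oring--Peternell, valid for any proper $h$ with connected fibres and $R^1h_*\OO_M=R^2h_*\OO_M=0$. The paper's route never analyses $h$ over $Y_t$ for $t\neq0$, nor the sheaf $h_*\PH_M$. The price is using the global topology of the shrunken families.

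\textbf{On Step 4.} What is needed is not openness of K\"ahlerness in flat families. It is that a smooth representative of $\gamma$ whose restriction to $Y_0$ is cohomologous to a K\"ahler form can be corrected by $\ddc b$ to be K\"ahler on a neighbourhood of $Y_0$. This is what the paper cites from Demailly--P\u aun.
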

	
	\begin{proof}
		We first extend $h_0:M_0\to Y_0$ to $h:M\to Y$ by a fundamental result of Koll\'ar--Mori \cite{KM} and prove that $R^ih_*\OO_M=0$ for every $i>0$. Then the isomorphism $H^i(Y,\OO_Y)\xrightarrow{\sim}H^i(M,\OO_M)$ will give the existence of the desired $\gamma\in H^{1,1}_{\mathrm{BC}}(X,\RR)$.
		
		Since $M_0,Y_0$ are normal, the bimeromorphic morphism satisfies $(h_{0})_*\OO_{M_0}=\OO_{Y_0}$. Furthermore, since they has rational singularities, take a common resolution $W\xrightarrow{p} M_0,W\xrightarrow{q} Y_0$, we have
		\begin{align*}
			&R^ip_*\mathcal{O}_W=0, \quad R^iq_*\mathcal{O}_W=0, \qquad i>0,\\
			&p_*\OO_{W}=\OO_{M_0}, \quad q_*\OO_{W}=\OO_{Y_0}.
		\end{align*}
		By relative Leray spectral sequence (see e.g. Stacks  Project, Lemma 20.13.8, Tag01F6), we get $R^i(h_0)_*\OO_{M_0}=0$ for $i>0$. 
		
		\noindent\textbf{Proposition.}
		Let $\pi_M\colon M\to\Delta$ be a proper flat morphism of
		complex spaces.
		Suppose that $h_0\colon M_0\to Y_0$ is a proper morphism such that
		\[
		(h_0)_*\mathcal O_{M_0}=\mathcal O_{Y_0},
		\qquad
		R^1(h_0)_*\mathcal O_{M_0}=0.
		\]
		Then, after shrinking $\Delta$ around $0$, there exist a proper
		flat morphism $\pi_Y\colon Y\to\Delta$ so that
		$\pi_Y^{-1}(0)\simeq Y_0$, and a proper contraction $h\colon M\to Y$ with $h_*\OO_M=\OO_Y$
		such that $h|_{M_0}=h_0$ and
		\[
		\begin{tikzcd}[column sep=0.4em, row sep=1.0em]
			M \arrow[rr, "h"] \arrow[dr, "\pi_M"']
			&& Y \arrow[dl, "\pi_Y"] \\
			& \Delta &
		\end{tikzcd}
		\]
		\begin{proof}
			This extension result of contraction morphism is a direct consequence of \cite[Proposition~11.4]{KM} and we include its proof for the convenience of readers.
			Let
			\[
			p\colon\mathcal M\to \operatorname{Def}(M_0),
			\qquad
			q\colon\mathcal Y\to \operatorname{Def}(Y_0)
			\]
			be proper flat representatives of the versal deformation families.
			By \cite[Proposition~11.4]{KM} applies to $h_0$, after choosing sufficiently small
			representatives, there exist morphisms
			\[
			H\colon \operatorname{Def}(M_0)\to \operatorname{Def}(Y_0),
			\qquad
			\mathcal H\colon\mathcal U\to\mathcal V
			\]
			such that $\mathcal H|_{M_0}=h_0$ and
			\[
			\begin{tikzcd}[row sep=large, column sep=large]
				\mathcal M
				\arrow[r, "\mathcal H"]
				\arrow[d, "p"']
				&
				\mathcal Y
				\arrow[d, "q"]
				\\
				\operatorname{Def}(M_0)
				\arrow[r, "H"']
				&
				\operatorname{Def}(Y_0).
			\end{tikzcd}
			\]
			By versality, after shrinking $\Delta$, the given deformation
			$\pi_M$ is induced by a morphism
			$a\colon(\Delta,0)\to(\operatorname{Def}(M_0),0)$:
			\[
			M\simeq\mathcal M\times_{\operatorname{Def}(M_0)}\Delta.
			\]
			Define
			\[
			Y:=\mathcal Y\times_{\operatorname{Def}(Y_0),H\circ a}\Delta,
			\]
			and let $\pi_Y\colon Y\to\Delta$ be the projection.
			Properness and flatness are preserved by base change, so $\pi_Y$
			is proper and flat, with central fiber isomorphic
			to $Y_0$.
			The morphism $\mathcal H$ induces a morphism $h\colon M\to Y$
			satisfying
			\[
			\pi_Y\circ h=\pi_M,
			\qquad h|_{M_0}=h_0.
			\]
			Moreover, $h$ is proper since its graph is closed in
			$M\times_\Delta Y$ and the projection
			$M\times_\Delta Y\to Y$ is a base change of the proper morphism
			$\pi_M$.
			
			It remains to verify the contraction assertion.
			Set $\mathcal B:=h_*\mathcal O_M$, and consider the canonical map
			\[
			\alpha\colon\mathcal O_Y\longrightarrow\mathcal B.
			\]
			Flatness of $M$ over $\Delta$ gives an exact sequence
			\[
			0\longrightarrow\mathcal O_M
			\xrightarrow{\ t\ }\mathcal O_M
			\longrightarrow\mathcal O_{M_0}\longrightarrow0.
			\]
			Applying $h_*$ shows that multiplication by $t$ is injective
			on $\mathcal B$ and gives an injection
			\[
			\mathcal B/t\mathcal B
			\hookrightarrow (h_0)_*\mathcal O_{M_0}
			=\mathcal O_{Y_0},
			\]
			where sheaves on $Y_0$ are viewed as sheaves on $Y$.
			The composite
			\[
			\mathcal O_{Y_0}
			=\mathcal O_Y/t\mathcal O_Y
			\xrightarrow{\ \overline{\alpha}\ }
			\mathcal B/t\mathcal B
			\hookrightarrow\mathcal O_{Y_0}
			\]
			is the identity. Hence $\overline{\alpha}$ is an isomorphism.
			
			Since $\mathcal B$ is coherent and $\OO_Y\to \mathcal{B}/t\mathcal{B}$ is surjective, Nakayama's lemma implies that
			$\alpha$ is surjective near $Y_0$.
			For $y\in Y_0$, write
			$K_y=\ker(\alpha_y)$.
			The injectivity of $\overline{\alpha}_y$ gives
			$K_y\subseteq t\mathcal O_{Y,y}$.
			If $k=ta\in K_y$, then $t\alpha_y(a)=0$.
			As multiplication by $t$ is injective on $\mathcal B_y$,
			we have $a\in K_y$.
			Consequently $K_y=tK_y$, and Nakayama's lemma gives $K_y=0$.
			Thus $\alpha$ is an isomorphism on a neighborhood of $Y_0$.
			
			Finally, properness of $\pi_Y$ allows us to shrink $\Delta$
			once more so that this neighborhood contains all of $Y$.
			Therefore $h_*\mathcal O_M=\mathcal O_Y$, as required.
		\end{proof}

		The space $Y$ is normal since $M$ is normal and $h_*\OO_M=\OO_Y$. $\pi_Y:Y\rightarrow \Delta$ is flat, then \cite[p160, Proposition~3.20]{Fischer} implies that the set $\{y\in Y: Y_{\pi_Y(y)} \text{ is not normal at }y\}$ is an analytic set in $Y$. $Y_0$ is normal then implies $Y_t$ are normal for every $t\in\Delta$ after shrinking $\Delta$.

		Consider the exact sequence
		\[0\to\OO_M\xrightarrow{t}\OO_M\to\OO_{M_0}\to0\]
		and we get
		\[\cdots\to R^ih_*\OO_M\xrightarrow{t} R^ih_*\OO_M\to R^ih_*\OO_{M_0}\to\cdots\]
		It is straightforward to verify $(R^ih_*\OO_{M_0})_{y}=(R^i(h_0)_*\OO_{M_0})_y$ for every $y\in Y_0$ and the latter vanishes for $i>0$.
		As a result $(R^ih_*\OO_M)_y\xrightarrow{t} (R^ih_*\OO_M)_y$ is surjective for $i>0$.
		Nakayama's lemma then implies that $(R^ih_*\OO_M)_y=0$. 
		Since $\pi_Y:Y\to \Delta$ is proper, after shrinking $\Delta$ we get
		\begin{equation}\label{eq:descent_O_vanishing}
			R^ih_*\OO_M=0,\qquad i>0.
		\end{equation}

		Now (\ref{eq:descent_O_vanishing}) and Leray spectral sequence give isomorphisms
		\begin{equation}\label{eq:Hi isom}
			h^*:H^i(Y,\OO_Y)\xrightarrow{\sim}H^i(M,\OO_M),\qquad i>0. 
		\end{equation}
		The exact sequence $0\to i\RR\to\OO_Y\xrightarrow{\operatorname{Re}}\PH_Y\to0$ and a similar exact sequence for $M$
		give the natural exact sequence
		\begin{equation}\label{eq:descent_BC_exact}
			\begin{tikzcd}[column sep=large, row sep=large]
				H^1(Y,\mathcal O_Y)
				\arrow[r, "\ell_Y"]
				\arrow[d, "h^*"', "\sim"]
				&
				H^{1,1}_{\mathrm{BC}}(Y,\mathbb R)
				\arrow[r, "j_Y"]
				\arrow[d, "h^*"]
				&
				H^2(Y,\mathbb R)
				\arrow[r]
				\arrow[d, "h^*"]
				&
				H^2(Y,\mathcal O_Y)
				\arrow[d, "h^*", "\sim"']
				\\
				H^1(M,\mathcal O_M)
				\arrow[r, "\ell_M"]
				&
				H^{1,1}_{\mathrm{BC}}(M,\mathbb R)
				\arrow[r, "j_M"]
				&
				H^2(M,\mathbb R)
				\arrow[r]
				&
				H^2(M,\mathcal O_M).
			\end{tikzcd}
		\end{equation}
		We first pass $H^{1,1}_{BC}(\cdot,\RR)$ to $H^2(\cdot,\RR)$ and use the following well-known fact (see e.g. \cite[Chap III Lemma 1.3]{BS})
		\[
		\varinjlim_{\varepsilon\downarrow0}
		H^2\bigl(\pi_Y^{-1}(\Delta_\varepsilon),\RR\bigr)
		\simeq H^2(Y_0,\RR).
		\]
		Thus, after shrinking $\Delta$, $j_{Y_0}(\gamma_0)$ lifts to some $c\in H^2(Y,\RR)$.
		Since $h^*c-j_M(a)$ restricts to zero on $M_0$ we know $h^*c=j_M(a)$ after shrinking $\Delta$ again.
		
		The isomorphism \eqref{eq:Hi isom} for $i=2$ and exact sequence \eqref{eq:descent_BC_exact}
		show that $c$ maps to zero in $H^2(Y,\OO_Y)$, so $c=j_Y(\gamma')$
		for some $\gamma'\in H^{1,1}_{\BC}(Y,\RR)$.
		Since $j_M(a-h^*\gamma^{\prime})=h^*c-h^*(j_Y\gamma^{\prime})=0$, we get $a-h^*\gamma'=\ell_M(v)$ for some $v\in H^1(M,\OO_M)$.
		The isomorphism \eqref{eq:Hi isom} for $i=1$ gives $u\in H^1(Y,\OO_Y)$ with $h^*u=v$ and we set
		$\gamma=\gamma'+\ell_Y(u)$. Thus $h^*\gamma=(a-\ell_Mv)+h^*\ell_Y(u)=a$ as we want.

		Since $h_0:M_0\to Y_0$ is a contraction between compact K\"ahler normal varieties with rational singularities and $R^i(h_0)_*\OO_{M_0}=0$ for $i>0$, Proposition~\ref{prop:mmp-bc-pullback} 
		implies $h_0^*$ is injective on Bott--Chern cohomology classes.
		Therefore $h_0^*(\gamma|_{Y_0}-\gamma_0)=a|_{M_0}-h_0^*\gamma_0=0$ leads to
		$\gamma|_{Y_0}=\gamma_0$. 
		For a smooth representative $\sigma$ of $\gamma$,
		\cite[Proposition 3.3(i)]{DP} gives a smooth function $b$ such that
		$\sigma+\ddc b$ is K\"ahler near $Y_0$.
		The proof is complete after shrinking $\Delta$ again.
	\end{proof}
	
	\begin{theorem}\label{prop:model}
		In the setting of Theorem~\ref{thm:variation}, fix $0\in\Delta$.
		After replacing $\Delta$ by a a smaller disk at $0$ and
		$X$ by its inverse image, retaining the same notation, there exist a
		proper flat family $g:Y\to\Delta$ of normal compact complex spaces,
		a K\"ahler form $\gamma$ on $Y$, and a common resolution in the
		commutative diagram
		\[
		\begin{tikzcd}[column sep=large,row sep=large]
			& Z \arrow[dl,"\mu"'] \arrow[dr,"f"]
			\arrow[dd,"q",description] & \\
			X \arrow[dr,"p"'] & & Y \arrow[dl,"g"] \\
			& \Delta &
		\end{tikzcd}
		\]
		such that $Z$ is K\"ahler and, with $P=f^*\gamma$, there is a real
		effective $f$-exceptional divisor $E_\alpha$ satisfying
		\begin{equation}\label{eq:zariski}
			\mu^*[\alpha]=[P]+[E_\alpha].
		\end{equation}
		Set
		\[
		E_\mu=K_{Z/\Delta}-\mu^*K_{X/\Delta},\qquad
		D=E_\alpha+E_\mu=\sum_{i=1}^{\ell}d_iD_i,
		\]
		where the $D_i$ are the distinct prime components of $D$ and $d_i>0$.
		Then $D$ is effective and $f$-exceptional, and
		\begin{equation}\label{eq:adjointZ}
			[K_{Z/\Delta}+\mu^*\beta]=[P]+[D].
		\end{equation}
		
		For $t\in\Delta$, put
		\[
		Z_t=q^{-1}(t),\quad Y_t=g^{-1}(t),\quad
		\mu_t=\mu|_{Z_t},\quad f_t=f|_{Z_t},\quad
		P_t=P|_{Z_t},\quad D_{i,t}=D_i\cap Z_t.
		\]
		There is a closed discrete subset $T\subset\Delta$ such that, writing
		$\Delta^\circ=\Delta\setminus T$, the map
		$q|_{q^{-1}(\Delta^\circ)}$ is a submersion, the support of
		$D|_{q^{-1}(\Delta^\circ)}$ has relative simple normal crossings,
		and every vertical component of $D$ lies in $q^{-1}(S)$.
		For $t\in\Delta^\circ$, $\mu_t:Z_t\to X_t$ and $f_t:Z_t\to Y_t$ are modifications,
		every irreducible component of $D_{i,t}$ is
		$f_t$-exceptional, $P_t$ is semipositive, $[P_t]$ is big, and
		\begin{equation}\label{eq:orthogonal}
			\int_{D_{i,t}}P_t^{n-1}=0
			\qquad(1\le i\le\ell,\ t\in\Delta^\circ),
		\end{equation}
		where the integral over an empty $D_{i,t}$ is zero.
		Moreover,
		\[
		\vol([\alpha_t])=\vol([\alpha_0])\qquad\text{for every }t\in\Delta.
		\]
	\end{theorem}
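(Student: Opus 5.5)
Write $\alpha=\theta_\omega+\beta$, so that $[\alpha]=[K_{X/\Delta}+\beta]$ and the statement is about adjoint classes $K_{X_t}+[\beta_t]$ with $\beta$ Kähler. The plan is to run the Kähler MMP of \cite{HX} on the central fiber and then spread it over a small disk.

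\textbf{Step 1: a generalized pair on $X_0$.} Treat $(X_0,0+\overline{\beta_0})$ as a generalized pair. It is smooth, $\beta_0$ is nef, it is gklt, and it even has canonical singularities. Its adjoint class $a_{X_0}=[\alpha_0]$ is big by hypothesis. Theorem~\ref{prop:mmp-existence}(1) then gives a good log terminal model $\phi_0:X_0\dashrightarrow M_0$. This is obtained as a finite sequence of transcendental MMP steps, since $B+\beta_X$ is big and we may run an MMP with scaling by Theorem~\ref{prop:mmp-existence}(2).

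\textbf{Step 2: extension over the disk.} Theorem~\ref{lem:mmp-extension} applies to this sequence because $B_0=0$, so the conditions on $\lfloor B_0\rfloor$ and on $N\wedge B_0$ are vacuous. It extends the sequence over a small disk to a $(K_X+\beta)$-negative bimeromorphic contraction $\phi:X\dashrightarrow M$ over $\Delta$ with central fiber $M_0$. Let $a_M$ be the pushforward of $[\alpha]$.

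By Theorem~\ref{thm:mmp-basepointfree}, $a_{M_0}=h_0^*\gamma_0$ for a Moishezon contraction $h_0:M_0\to Y_0$ and a Kähler class $\gamma_0$. Since $a_{M_0}$ is big, $h_0$ is bimeromorphic. By Proposition~\ref{prop:mmp-rational}, $M_0$ and $Y_0$ have rational singularities. Lemma~\ref{lem:descent} then gives $h:M\to Y$ over $\Delta$ and a Kähler class $\gamma$ on $Y$ with $h^*\gamma=a_M$; we choose a Kähler form representative, again called $\gamma$.

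Next take a Kähler resolution $Z$ of the graph of $X\dashrightarrow Y$, which exists since $X$ is Kähler and we may blow up, with maps $\mu$ and $f$. Negativity of $\phi$ gives $\mu^*[\alpha]=f^*[\gamma]+[E_\alpha]$, where $E_\alpha\ge0$ is exceptional over $M$. The part that is $h$-exceptional is absorbed since $h^*\gamma$ is pulled back, so $E_\alpha$ is $f$-exceptional; this is \eqref{eq:zariski}.

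Writing $K_{Z/\Delta}=\mu^*K_{X/\Delta}+E_\mu$ with $E_\mu\ge0$ and $\mu$-exceptional gives \eqref{eq:adjointZ}. The divisor $E_\mu$ is $f$-exceptional because $\phi$ is a contraction: every $\mu$-exceptional divisor is exceptional over $M$, hence over $Y$.

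\textbf{Step 3: genericity and orthogonality.} Generic smoothness and generic relative SNC of $\operatorname{Supp}D$ hold off a closed analytic proper subset $T$ of $\Delta$, which is discrete. Vertical components of $D$ map to points, which we place in $T$.

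For $t\notin T$, $Z_t$ is smooth and $f_t$ is bimeromorphic, since $Y_t$ is normal and of the same dimension, with $f$ generically an isomorphism on the general fiber. Also $P_t=f_t^*\gamma_t\ge0$, and $[P_t]$ is big because $\gamma_t$ is Kähler on $Y_t$. Each component of $D_{i,t}$ maps into $f(D_i)\cap Y_t$, which has codimension $\ge2$ in $Y_t$ for general $t$ since $f(D_i)$ has codimension $\ge2$ in $Y$ and is horizontal. Therefore
\[
\int_{D_{i,t}}P_t^{n-1}=\int_{D_{i,t}}f_t^*\gamma_t^{n-1}=0,
\]
because $f_t(D_{i,t})$ has dimension $\le n-2$. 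This is \eqref{eq:orthogonal}.

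\textbf{Step 4: volume constancy.} For $t\notin T$,
\[
\vol([\alpha_t])=\vol(\mu_t^*[\alpha_t])=\vol([P_t]+[E_{\alpha,t}])=\vol([P_t])=\int_{Y_t}\gamma_t^n .
\]
The third equality holds because $E_{\alpha,t}$ is $f_t$-exceptional and effective: every positive current in the class pushes forward to $Y_t$ in $\{\gamma_t\}$, so Boucksom's divisorial Zariski theory gives $\vol=\vol([P_t])$. The last quantity is constant in $t$ by flatness and closedness of $\gamma$.

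At $t=0$ we use directly $\vol([\alpha_0])=\vol(a_{M_0})=\int_{Y_0}\gamma_0^n$, since the MMP is non-positive (Proposition~\ref{prop:mmp-discrepancies}(2)) and volume is preserved under $a$-non-positive contractions. For the remaining points of $T$ near $0$, we first shrink the disk so that $T\subset\{0\}$. Then repeating the construction centered at any $t_0$ gives local constancy everywhere, and $\Delta$ is connected.

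\textbf{Main obstacle.} The delicate point is Step 2: checking that the extended family $M$ has the structure Lemma~\ref{lem:descent} requires, namely normal total space, flatness and normal fibers, and that fiberwise restriction commutes with pushforward of classes. This amounts to showing $a_M|_{M_0}=a_{M_0}$ via $K_{M}+M_0$ adjunction. The first thing to try is to read this off from the inductive construction in \cite[Theorem~7.7]{HX}.
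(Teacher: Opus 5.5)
Your proposal is correct and follows essentially the same route as the paper: an MMP with scaling on $(X_0,0+\boldsymbol\beta_0)$, then extension over the disk via \cite[Theorem~7.7]{HX} and $h$ via Lemma~\ref{lem:descent}. It continues with a resolution $Z$, a generic choice of $T$ giving $\dim f_t(D_{i,t})\le n-2$, and volume constancy from the Zariski decomposition on good fibers, non-positivity of $\phi_0$ at $t=0$, and constancy of $\int_{Y_t}\gamma^n$; the properties of $M$ you flag as the main obstacle are exactly what the paper reads off from \cite[Theorem~7.7]{HX} without further argument. Make two points explicit, as the paper does: to use negativity of $\phi$, $Z$ must dominate $M$ (resolve $X\dashrightarrow M$ and set $f=h\circ r$) rather than only the graph of $X\dashrightarrow Y$, and you need a further log resolution making $\operatorname{Supp}D$ simple normal crossings before generic relative SNC can hold; your push-forward argument for $\vol([P_t]+[E_{\alpha,t}])=\vol([P_t])$ is an acceptable substitute for the paper's non-K\"ahler-locus argument via Boucksom.
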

	
	\begin{remark}
		$D$ being relative SNC means that for every $x_0\in q^{-1}(\Delta^\circ)$, there is a product coordinate neighborhood $\Omega_z\times V_t$ of $x_0$ so that $\operatorname{Supp}D$ is defined by $z_1\cdot...\cdot z_m$ for some $m\leq n$.
	\end{remark}
	
	\begin{proof}
		We first run MMP for $(X_0,0+\boldsymbol{\beta_0})$, where the b-current $\boldsymbol{\beta_0}$ is induced by $\beta_0=\beta|_{X_0}$. We get a good log terminal model $\phi_0:X_0\dashrightarrow M_0$ so that on a common resolution $X_0\xleftarrow{r_0}W_0\xrightarrow{s_0}M_0$ we have $r_0^*a_{X_0}=s_0^*a_{M_0}+[F_0]$ where $a_{X_0}=[K_{X_0}+\beta_0]:=[\alpha_0]$ denotes the adjoint class of, $a_{M_0}$ denotes that of $M_0$ and $F_0$ is effective $s_0$-exceptional divisor.
		The transendental base-point-free theorem gives a contraction
		$h_0:M_0\to Y_0$ so that $a_{M_0}=h_0^*\gamma$ for some K\"ahler class $\gamma$ on $Y_0$.
		
		We then extend $\phi_0:X_0\dashrightarrow M_0$, $h_0:M_0\to Y_0$ to $\phi:X\dashrightarrow M$, $h:M\to Y$ by Theorem~\ref{lem:mmp-extension} and Lemma~\ref{lem:descent} respectively.
		Take a common resolution $X\xleftarrow{\mu}Z\xrightarrow{r}M$ we obtain $\mu^*[\alpha]=r^*a_M+[E_{\alpha}]$ for some effective $s$-exceptional divisor $E_{\alpha}$.
		
		After removing a discrete set $T\subset\Delta$ we can prove over $\Delta^\circ=\Delta\backslash T$,
		$\mu_t:Z_t\to X_t$ is modification. Moreover the common resolution $X_t\xleftarrow{\mu_t}Z_t\xrightarrow{f_t}Y_t$ gives the divisorial Zariski decomposition $\mu_t^*[\alpha_t]=[P_t]+[E_{\alpha,t}]$ where $P_t=f_t^*\gamma$ is semipositive and big. This implies $\vol([\alpha_t])=\vol([P_t])=\int_{Y_t}\gamma^n$ for $t\in\Delta^\circ$.
		The common resolution $X_0\xleftarrow{s_0}W_0\xrightarrow{h_0\circ r_0}Y_0$ in the first step fulfills that $s_0^*[\alpha_0]=(f_0\circ r_0)^*[\gamma]+[F_0]$ is the divisorial Zariski decomposition.
		Consequently $\vol([\alpha_0])=\int_{Y_0}\gamma^n$.
		Finally we show that $\int_{Y_t}\gamma^n$ is constant on $\Delta$, and this proves $\vol([\alpha_t])$ is also constant.

		\medskip
		\noindent\textbf{Step 1: run MMP for central fiber $X_0$ to get semiample adjoint class.}
		Put $\beta_0=\beta|_{X_0}$ and the adjoint class
		\[
		a_0:=[K_{X_0}+\beta_0]=[\alpha_0],
		\]
		which is big by assumption. 
		Since $\boldsymbol\beta_0$ descends to $X_0$, its generalized
		discrepancies coincide with those of $(X_0,0)$. Thus
		$(X_0,0+\boldsymbol\beta_0)$ has canonical singularity and in particular is gklt.

		By Theorem~\ref{prop:mmp-existence}(=\cite[Theorems 1.1 and 1.5]{HX}), the MMP with scaling gives
		a good log terminal model
		\[
		\phi_0:(X_0,0+\boldsymbol\beta_0)
		\dashrightarrow (M_0,0+\boldsymbol\beta_0),
		\]
		where $M_0$ is normal, compact and K\"ahler.
		The pair $(M_0,0+\boldsymbol\beta_0)$ is gklt by Proposition~\ref{prop:mmp-discrepancies}(1)
		and hence $M_0$ has rational
		singularities by Proposition~\ref{prop:mmp-rational}(1).
		The class
		\[
		a_{M_0}:=[K_{M_0}+\beta_{M_0}]
		\in H^{1,1}_{\mathrm{BC}}(M_0,\mathbb R)
		\]
		is nef by definition of good log terminal model.
		
		Choose a common resolution
		\[
		X_0\xleftarrow{\ r_0\ }W_0\xrightarrow{\ s_0\ }M_0,
		\]
		with $W_0$ smooth, compact and K\"ahler. Then
		$\beta_{M_0}=s_{0*}r_0^*\beta_0$, this proves $\beta_{M_0}$ is modified big. 
		Using Proposition~\ref{prop:mmp-discrepancies},
		good log terminal model $\phi_0:X_0\dashrightarrow M_0$ is $a_0$ non-positive, namely
		\[
		r_0^*a_0=s_0^*a_{M_0}+[F_0],
		\qquad
		F_0\geq0,\quad F_0\text{ is }s_0\text{-exceptional}.
		\]
		The class $r_0^*a_0$ is big by Remark~\ref{rem:mmp-bigness}, 
		Since $s_{0*}[F_0]=0$, we have
		$a_{M_0}=s_{0*}r_0^*a_0$. Consequently, $a_{M_0}$ is big also follows from Remark~\ref{rem:mmp-bigness}.
		
		Applying trancendental base-point-free Theorem~\ref{thm:mmp-basepointfree}
		to
		$(M_0,0+\boldsymbol\beta_0)$, we obtain a Moishezon contraction
		\[
		h_0:M_0\longrightarrow Y_0
		\]
		onto a compact K\"ahler space, together with a K\"ahler class
		$\gamma_0\in H^{1,1}_{\mathrm{BC}}(Y_0,\mathbb R)$ such that
		\[
		a_{M_0}=h_0^*\gamma_0.
		\]
		By \cite[Definition 2.32]{HX},
		$(h_0)_*\mathcal O_{M_0}=\mathcal O_{Y_0}$, it follows that
		$Y_0$ is normal. Since $a_{M_0}$ is big, $h_0$ is bimeromorphic.
		Finally, $a_{M_0}=h_0^*\gamma_0$ also implies $Y_0$ is gklt by Proposition~\ref{prop:mmp-rational}(2) and consequently $Y_0$ has rational singularities.

		\medskip
		\noindent\textbf{Step 2: extend the MMP and $h_0$.}
		Let $\boldsymbol\beta$ be the $b$-current determined by $\beta$ on $X$.
		For $(X_0,0+\boldsymbol\beta_0)$, the hypotheses of
		\cite[Theorem 7.7]{HX} hold: the pair is canonical, the boundary is
		zero, and $N([\alpha_0])\wedge0=0$.
		Therefore the MMP $\phi_0:X_0\dashrightarrow M_0$ extends, after shrinking $\Delta$, to a bimeromorphic contraction between normal spaces
		\[
		\phi:X\dashrightarrow M
		\quad\text{over }\Delta.
		\]
		Let $a_X=[K_{X/\Delta}+\beta_X]$ and $a_M=[K_{M/\Delta}+\beta_M]$ be the adjoint classes, then $a_M|_{M_0}=a_{M_0}$ and the bimeromorphic contraction $X\dasharrow M$ is $a_X$-negative.
		The normal space $M$ is a proper flat family over $\Delta$;
		Lemma~\ref{lem:descent} thus extends $h_0$ to $h:M\to Y$ and gives a
		K\"ahler form $\gamma$ on $Y$ such that
		\[
		a_M=h^*[\gamma],\qquad [\gamma]|_{Y_0}=\gamma_0.
		\]
		
		\medskip
		\noindent\textbf{Step 3: prove the class identities \eqref{eq:zariski} \eqref{eq:adjointZ}.}
		Choose $\mu:Z\to X$ be a sequence of blow-ups to resolve the bimeromorphic map
		$X\dashrightarrow M$, and let $r:Z\to M$ be the induced
		morphism. After shrinking $\Delta$, $Z$ is K\"ahler. Set $f=h\circ r$, $q=p\circ\mu=g\circ f$ and $E_{\mu}=K_{Z/\Delta}-\mu^*K_{X/\Delta}$.

		The bimeromorphic contraction $X\dashrightarrow M$ is $a_X$-negative means 
		\[
		\mu^*[\alpha]=r^*a_M+[E_\alpha]
		=f^*[\gamma]+[E_\alpha],
		\qquad E_\alpha\ge0,\quad E_\alpha\text{ is }r\text{-exceptional}.
		\]
		This proves \eqref{eq:zariski}. 
		Set $D:=E_{\alpha}+E_{\mu}$. $E_{\alpha}$ is $f=h\circ r$-exceptional thanks to it is $r$-exceptional. We may simply assume $D$ is SNC after passing to a further log resolution. Indeed, we take a further log resolution $Z^{\prime}\xrightarrow{\mu'} Z$ so that $\operatorname{Supp}((\mu')^*D)+\operatorname{Exc}(\mu')$ is SNC. If we set $E_{\alpha}'=(\mu')^*E_{\alpha}$, which is SNC, the corresponding class equality \eqref{eq:zariski} still holds on $Z^{\prime}$. 
		Moreover $E_{\mu\circ \mu'}:=K_{Z'/\Delta}-(\mu\circ\mu')^*K_{X/\Delta}=K_{Z'/Z}+(\mu')^*E_{\mu}$ and $D':=E_{\alpha}'+E_{\mu\circ\mu'}=K_{Z'/Z}+(\mu^{\prime})^*(E_{\alpha}+E_{\mu})$ is SNC.
		Since $X\dashrightarrow Y$ is a bimeromorphic contraction, $X\xleftarrow{\mu}Z\xrightarrow{f}Y$ is a common resolution,
		every $\mu$-exceptional divisor is $f$-exceptional. 
		This proves the Jacobian divisor $E_\mu$ is $f$-exceptional. Finally,
		\[
		[K_{Z/\Delta}+\mu^*\beta]=[\mu^*(K_{X/\Delta}+\beta)]+[E_\mu]
		=\mu^*[\alpha]+[E_\mu]=[P]+[D],
		\]
		which proves \eqref{eq:adjointZ}.
		
		\medskip
		\noindent\textbf{Step 4: choose $S$ and prove \eqref{eq:orthogonal}.}
		After shrinking $\Delta$, $D=\sum_{i=1}^ld_iD_i$ has finitely many components. For indices $1\leq i_1<\cdots<i_m\leq\ell$, the critical values of $q|_{D_{i_1}\cap\cdots \cap D_{i_m}}:D_{i_1}\cap\cdots \cap D_{i_m}\to\Delta$ is discrete by Sard theorem. Let $T_1\subset\Delta$ be the set of such values.
		Let $\operatorname{Exc}(f)$ be the exceptional locus of bimeromorphic morphism $f:Z\to Y$, then $\dim \operatorname{Exc}(f)\leq n$ and $\operatorname{Exc}(f)$ has finitely many components after shrinking $\Delta$.
		It follows that there exists a finite set $T_2\in\Delta$ so that $\dim\operatorname{Exc}(f)_t\leq n-1$ for $t\in \Delta\backslash T_2$. Since $\dim Z_t=n$ for every $t\in \Delta$, $f_t:Z_t\to Y_t$ is modification for $t\in \Delta\backslash T_2$.
		Similarly there is a finite set $T_3\subset \Delta$ so that $\dim E_{\mu,t}\leq n-1$ for $t\in \Delta\backslash T_3$ and $\mu_t:Z_t\to X_t$ are modifications for such $t$.
		Let $T_4$ be the set of critical values of $q:Z\to \Delta$.
		We set $T:=T_1\cup T_2\cup T_3\cup T_4$, $\Delta^\circ:=\Delta\backslash T$ then 
		\begin{itemize}
			\item $D$ is relative SNC on $q^{-1}(\Delta^\circ)$,
			\item $f_t:Z_t\to Y_t$ are modifications for $t\in \Delta^\circ$,
			\item $\mu_t:Z_t\to X_t$ are modifications for $t\in \Delta^\circ$,
			\item $q|_{q^{-1}(\Delta^\circ)}$ is a submersion.
		\end{itemize}
		For $t\in\Delta^\circ$, we claim that $D_{i,t}:=D_i\cap q^{-1}(t)$ has image of dimension
		at most $n-2$ under $f_t$. Indeed, from $D_i$ is $f$-exceptional we know
		$\dim f(D_i)\le n-1$. 
		By restricting on $q^{-1}(\Delta^\circ)$, $q(D_i)=\Delta^\circ$ implies $f(D_i)\xrightarrow{g} \Delta^\circ$ is surjective. Its fibers have dimension at most $n-2$ since $f(D_i)$ is irreducible.
		This proves $f(D_{i,t})=(f(D_i))_t$ has dimension at most $n-2$ for $t\in\Delta^\circ $. 
		Note that $D_{i,t}$ are compact manifolds of
		dimension $n-1$ since $t\in \Delta^\circ$. Therefore $\int_{D_{i,t}}P_t^{n-1}=\int_{D_{i,t}}(f_t^*\gamma)^{n-1}=0$ which is 
		\eqref{eq:orthogonal}.
		Since $f_t:Z_t\to Y_t$ is modification, $P_t=f^*_t\gamma$ is semipositive and strictly positive almost everywhere on $Z_t$.
		Since $Z_t$ is K\"ahler we know the class $\{P_t\}$ is big by \cite{DP}.
		
		\medskip
		\noindent\textbf{Step 5: prove the constancy of $\operatorname{vol}([\alpha_t])$ for $t\in\Delta$.}
		
		\emph{Step 5.1: prove $\vol([\alpha_t])=\int_{Z_t}P_t^n=\int_{Y_t}\gamma^n$ for $t\in\Delta^\circ$.} It is clear that
		$\vol([\alpha_t])=\vol(\mu^*[\alpha_t])$ for $t\in \Delta^\circ$, since $\mu_t$ is a modification.
		We claim that $\mu^*[\alpha_t]=[P_t]+[E_{\alpha,t}]=[f^*_t\gamma]+[E_{\alpha,t}]$ is the divisorial Zariski decomposition of $\mu_t^*[\alpha_t]$ with positive part $[P_t]$ and negative part $[E_{\alpha,t}]$.
		Indeed, every irreducible component $E_{i}$ (we omit the subscripts $\alpha,t$) of $E_{\alpha,t}$ is contained in the non-K\"ahler locus $E_{nK}([P_t])$.
		Suppose to the contrary, if there is some $x\in E_i\backslash E_{nK}([P_t])$, then there is a K\"ahler current $P_t+dd^c\varphi$ with analytic singularity which is smooth at $x$. On the other hand $E_i$ is $f_t$-exceptional implies there is a compact subvariety $V_i\subset E_i$ of positive dimension through $x$ s.t. $f_t(V_i)=f_t(x)$.
		However $P_t|_{V_i}=f_t^*\gamma|_{V_i}=0$, we get a psh function $\varphi|_{V_i}$ which is strictly psh at $x$, a contradiction.
		Since $[P_t]$ is big and nef, by \cite[Theorem 3.18]{Boucksom} we know the positive part of $[P_t+E_{\alpha,t}]$ equals $[P_t]$
		and
		\cite[Proposition 3.20]{Boucksom} then gives
		\[
		\vol(\mu^*[\alpha_t])=\vol([P_t+E_{\alpha,t}])=\vol([P_t])=\int_{Z_t}P_t^n
		\]
		and the latter equals $\int_{Y_t}\gamma^n$ since $f_t$ is a modification.
		
		\emph{Step 5.2: prove $\vol([\alpha_0])=\int_{Y_0}\gamma^n$.}
		The idea is same as \emph{Step 5.1}.
		Recall that we take a common resolution $X_0\xleftarrow{r_0}Z_0'\xrightarrow{s_0} M_0$ of the bimeromorphic contraction $X_0\dashrightarrow M_0$ in \noindent\textbf{Step 1}, it holds that
		\[
		r_0^*[\alpha_0]=s_0^*a_{M_0}+[F_0]=(h_0\circ s_0)^*[\gamma_0]+[F_0],
		\qquad
		F_0\geq0,\quad F_0\text{ is }s_0\text{-exceptional},
		\]
		where $\gamma_0$ is the K\"ahler form on $Y_0$ so that $h_0^*[\gamma_0]=a_{M_0}$.
		Clearly $F_0$ is also $h_0\circ s_0$-exceptional and therefore the same argument as \emph{Step 5.1} gives
		\[
		\vol([\alpha_0])=\vol(r_0^*[\alpha_0])=\vol([(h_0\circ s_0)^*\gamma_0])=\int_{Y_0}\gamma_0^n.
		\]
		
		\emph{Step 5.3: prove constancy of $\int_{Y_t}\gamma^n$ on $\Delta$.}
		For proper morphism $g:Y\to \Delta$, the direct image $g_*(\gamma^n\wedge [Y])$ is a closed positive current of degree zero on $\Delta$,
		hence it equals some constant $c$. 
		For any $a\in\Delta$, the defining function of the fiber $Y_a$ is $g-a:=g^*t-a$.
		$Y_a$ is reduced since it is normal by Lemma~\ref{lem:descent}.
		$Y$ is normal so its singular locus has codimension at least two. Since $Y_a$ is of pure codimension one, every component of $Y_a$ intersects $Y_{\text{reg}}$ and reducedness gives us $dd^c\log|g-a|\wedge[Y]=[Y_a]$ (the multiplicity of $[Y_a]$ is one).
		The
		projection formula therefore gives us
		\[
		\begin{aligned}
			g_*\bigl([Y_a]\wedge\gamma^n\bigr)
			&=g_*\ddc\bigl([Y]\wedge\gamma^n\log|g-a|^2 \bigr)\\
			&=g_*([Y]\wedge \gamma^n) \wedge\ddc(\log|t-a|^2) 
			=c\,\delta_a.
		\end{aligned}
		\]
		The left side is
		$\bigl(\int_{Y_a}\gamma_a^n\bigr)\delta_a$, so
		$\int_{Y_a}\gamma_a^n=c$ for every $a\in\Delta$.
		
		We can deduce that $\vol([\alpha_t])=\vol([\alpha_0])$ for $t\in\Delta^\circ$. Since $T$ is discrete, we know $\vol([\alpha_t])$ is constant near $0$ regardless of whether $0\in\Delta^\circ$ or not.
		Since $0$ is chosen arbitrarily, we actually prove that
		$\vol([\alpha_t])$ is locally constant. 
		The proof is therefore complete.
	\end{proof}
	
	\section{Uniform estimates of families of Monge--Amp\`ere equations}\label{sec:uniform estimate of MA}

	We summary the estimates, stability results of Monge--Amp\`ere equations used in this article.
	Let $(X,\omega)$ be a compact K\"ahler manifold of dimension $n$, $\beta$ be a closed semipositive $(1,1)$-form so that $\{\beta\}\in H^{1,1}(X,\RR)$ is big. $\vol([\beta])=\int_X\beta^n>0$.
	Let $dV$ be a positive smooth volume form and $f\geq0$, $f\in L^p(X,dV)$ for some $p>1$ so that $\mu:=fdV$ is a probability measure.
	\begin{theorem}[{\cite[Theorem 2.6]{GL25}}]\label{thm:GL stability}
		Fix $m>n$, let 
		\[A_m(\mu):=\sup\Big\{ \big(\int_X|\psi|^m\mu)^{\frac{1}{m}}   :\psi\in\operatorname{PSH}(X,\beta),\ \sup_X\psi=0\Big\}.\]
		If $\varphi\in\operatorname{PSH}(X,\beta)$ solves
		\[\frac{(\beta+dd^c\varphi)^n}{\vol([\beta])}=\mu, \qquad \sup_X\varphi=0,\]
		then for any $\phi\in\operatorname{PSH}(X,\beta)\cap L^{\infty}(X)$,
		\[\sup_X(\phi-\varphi)_+\leq T\Big(\int_X(\phi-\varphi)_+d\mu \Big)^{r}\]
		where $r$ only depends on $(m,n)$, $T$ only depends on $(m,n,A_m(\mu),\|\phi\|_{L^{\infty}(X)})$.
	\end{theorem}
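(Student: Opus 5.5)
The plan is to follow the Guedj--Lu ``auxiliary envelope'' strategy, which replaces Kołodziej's capacity decay arguments. The only inputs are the comparison (domination) principle for bounded $\beta$-psh functions and uniform $L^\infty$ bounds governed by $A_m(\mu)$. Since $\beta$ is semipositive and $[\beta]$ is big, $\PSH(X,\beta)\cap L^\infty(X)$ is nonempty and non-pluripolar products of bounded potentials have full mass $\vol([\beta])$. The comparison principle therefore applies verbatim to bounded functions.

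\textbf{Step 1: $L^\infty$ bound for $\varphi$.} Since $m>n$, Hölder's inequality together with the definition of $A_m(\mu)$ shows that $\mu$ is dominated by capacity in the sense required by the Guedj--Lu $L^\infty$ estimate. Hence $-C_0\le\varphi\le 0$ with $C_0=C_0(n,m,A_m(\mu))$. Put $M=\|\phi\|_{L^\infty}+C_0$, so that $0\le(\phi-\varphi)_+\le M$.

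\textbf{Step 2: auxiliary equation.} Set $g=(\phi-\varphi)_+$ and $\varepsilon=\int_X g\,d\mu$; we may assume $0<\varepsilon$ is small, since otherwise the bound is trivial after enlarging $T$. Solve
\[
\frac{(\beta+\ddc u)^n}{\vol([\beta])}=\frac{(1+g^{\,q})\,\mu}{\int_X(1+g^{\,q})\,d\mu},\qquad \sup_X u=0,
\]
for a suitable exponent $q$. The right-hand side is still bounded by a fixed multiple of $\mu$, because $g\le M$. Consequently $A_m$ of the new measure is controlled, and Step 1 gives $\|u\|_{L^\infty}\le C_1(n,m,A_m(\mu),M)$. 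Moreover, $\int|u|^m d\mu\le A_m^m$ controls $u$ in $L^m(\mu)$.

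\textbf{Step 3: comparison.} For $\delta\in(0,1)$ and $s>0$, consider the competitor $w=(1-\delta)\phi+\delta u - s$. On the set $\{\varphi<w\}$, compare Monge--Ampère measures. We have $(\beta+\ddc w)^n\ge\delta^n(\beta+\ddc u)^n\ge \delta^n c\,(1+g^q)\mu$, while $(\beta+\ddc\varphi)^n=\vol([\beta])\mu$. On $\{\varphi<w\}$ one has $g\ge s-\delta C_1$, so with $s=2\delta C_1$ and $\delta^n c\,(\delta C_1)^q\gtrsim\vol([\beta])$ the first measure strictly dominates there. The domination principle then forces $\varphi\ge w$ everywhere. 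This gives $\phi-\varphi\le \delta(\|\phi\|_\infty+C_1)+s$, but the constraint on $\delta$ has not yet involved $\varepsilon$.

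To bring in $\varepsilon$, I will instead normalize the density in Step 2 by $g/\varepsilon$. More precisely, use the density $\varepsilon^{-1}g\,\mu$ mixed with $\mu$, so that its $L^p$-type size relative to $\mu$ is controlled by $M/\varepsilon$. Using $A_m$, the $L^\infty$ bound then degenerates only polynomially in $\varepsilon^{-1}$. Balancing $\delta$ against this polynomial loss yields $\sup(\phi-\varphi)_+\le T\varepsilon^{r}$ with $r=r(n,m)$.

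\textbf{Main obstacle.} The delicate step is making the dependence on $\varepsilon$ in Step 3 quantitative. Concretely, one needs an $L^\infty$ bound for solutions with density $\varepsilon^{-1}g\,f$ that grows only like a power of $\varepsilon^{-1}$, uniformly through $A_m(\mu)$ and with no dependence on $\|f\|_{L^p}$. I would obtain this from Hölder's inequality and the Guedj--Lu ``$\chi$-envelope'' proof of the $L^\infty$ estimate: test against $\psi\in\PSH(X,\beta)$ normalized by $\sup\psi=0$, and use $\int g\,|\psi|^n d\mu\le \varepsilon^{1-n/m}M^{n/m}A_m^{n}$. This explains why $r$ depends only on $(m,n)$.
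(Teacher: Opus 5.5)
The paper gives no proof of this statement; it is quoted from Guedj--Lu \cite{GL25}, so your proposal has to stand on its own. The scheme you settle on is workable: solve $\vol([\beta])^{-1}(\beta+\ddc u)^n=\nu:=\tfrac12(1+\varepsilon^{-1}g)\mu$ with $\sup_X u=0$, apply the domination principle to $w=(1-\delta)\phi+\delta u-s$, then optimize in $\delta$. You were right to abandon the first version of Step 3. However, the step that makes the final scheme work is exactly the one you leave open, and as you formulate it, it is not sufficient.

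Take $s=\delta(\norm{\phi}_{L^\infty}+\norm{u}_{L^\infty})+4\varepsilon\delta^{-n}$. Then $g>4\varepsilon\delta^{-n}$ on $\{\varphi<w\}$, so $(\beta+\ddc w)^n\ge\delta^n\vol([\beta])\,\nu\ge 2(\beta+\ddc\varphi)^n$ there. The domination principle then gives
\[
\sup_X(\phi-\varphi)_+\le 2\delta\bigl(\norm{\phi}_{L^\infty}+\norm{u}_{L^\infty}\bigr)+4\varepsilon\delta^{-n}.
\]
Suppose $\norm{u}_{L^\infty}\le C\varepsilon^{-a}$. The optimal choice $\delta=\varepsilon^{(1+a)/(n+1)}$ yields a bound $\varepsilon^{(1-na)/(n+1)}$, so the argument closes if and only if $a<1/n$. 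Hence ``degenerates only polynomially'' is not enough, and this threshold is precisely where $m>n$ has to enter. Since $\nu\le(M/\varepsilon)\mu$, you get $A_m(\nu)\lesssim\varepsilon^{-1/m}A_m(\mu)$. What you therefore need is an $L^\infty$ estimate whose constant grows at most like $A_m(\nu)^{\kappa}$ with $\kappa<m/n$. That is not automatic: bounds obtained from capacity domination plus Ko{\l}odziej's iteration, where the steps are restricted to $t\le 1$, have exponents that blow up as $m\downarrow n$. Nothing in your Step 1 provides such a bound.

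The missing lemma is a linear bound $\norm{u}_{L^\infty}\le C(n,m)\,A_m(\nu)$. It gives $a=1/m<1/n$ and closes your argument with $r=\frac{m-n}{m(n+1)}$. It can be proved by a comparison of Guo--Phong--Tong type, as follows.
\begin{enumerate}
\item Put $A_s=\int(-u-s)_+\,d\nu$, and let $\psi$ be the normalized solution of $\vol([\beta])^{-1}(\beta+\ddc\psi)^n=A_s^{-1}(-u-s)_+\nu$.
\item For $\Lambda\approx A_s$, the function $-c_nA_s^{1/(n+1)}(-\psi+\Lambda)^{n/(n+1)}$ is $\beta$-psh; this uses $\beta\ge0$.
\item The domination principle shows that $u+s$ lies above this function.
\item Raising to the exponent $m$ and integrating against $\nu$ gives $r\,\nu(\{u<-s-r\})\le C\,A_m(\nu)\,\nu(\{u<-s\})^{1+1/n-1/m}$ for all $r>0$.
\item De Giorgi's lemma, applied to this inequality, concludes.
\end{enumerate}

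Your H\"older bound $\int g|\psi|^n\,d\mu\le\varepsilon^{1-n/m}M^{n/m}A_m^n$ does not replace this lemma. It controls only the $n$-th moment under $\varepsilon^{-1}g\mu$, which is the critical exponent and yields no $L^\infty$ bound. With an exponent in $(n,m)$, H\"older merely reproduces $A(\nu)\lesssim\varepsilon^{-1/m}$, and you are back to needing the growth rate above.

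A minor point: the case $\varepsilon=0$ is not handled by enlarging $T$. It follows directly from the domination principle, since $(\beta+\ddc\varphi)^n$ puts no mass on $\{\varphi<\phi\}$.
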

	
	\begin{theorem}[{\cite[Theorem 1.1]{LWZ}}]\label{thm:LWZ}
		Assume $c,C>0$ so that for every $\psi\in \operatorname{PSH}(X,\beta)$, $\sup_X\psi=0$
		\[\int_X|\psi|dV\leq C,\qquad \int_Xe^{-c\psi}\leq C.\]
		Then the unique solution $\varphi\in\operatorname{PSH}(X,\beta)\cap L^{\infty}(X)$ of the equation
		\[\frac{(\beta+dd^c\varphi)^n}{\vol([\beta])}=fdV,\qquad \sup_X\varphi=0,
		\]
		and $\norm{\varphi}_{L^{\infty}(X)}$ only depends on $(p,n,c,C,\norm{f}_p,\vol([\beta]))$. 
	\end{theorem}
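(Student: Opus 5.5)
The plan is to run the auxiliary Monge--Amp\`ere equation argument of Guo--Phong--Tong in the big, semipositive class $[\beta]$. The point is to arrange that the only geometric inputs are the two uniform bounds in the hypothesis: the $L^1$ bound and the $\alpha$-invariant type bound for normalized $\beta$-psh functions. A quicker route would be to apply Theorem~\ref{thm:GL stability} with $\phi=0$, which is $\beta$-psh and bounded since $\beta\ge0$. Since $\sup_X\varphi=0$, this gives
\[
-\inf_X\varphi\le T\Big(\int_X(-\varphi)f\,dV\Big)^r,
\]
and the right side is controlled by H\"older's inequality: $\int|\varphi|f\,dV\le\|f\|_p\|\varphi\|_{L^q(dV)}$, while $\|\varphi\|_{L^q}$ is bounded through $|\psi|^q\le C_q e^{-c\psi}$ and $\int e^{-c\psi}\,dV\le C$. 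The same estimate bounds $A_m(\mu)$. The weakness of this route is that the dependence of $T$ in \cite{GL25} is not obviously reduced to $(p,n,c,C,\|f\|_p,\vol([\beta]))$. I would therefore aim for a self-contained argument and keep the quick route only as a consistency check.

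Concretely, for $s\ge0$ set $\Omega_s=\{\varphi<-s\}$ and $A_s=\int_{\Omega_s}(-\varphi-s)f\,dV$. By Boucksom--Eyssidieux--Guedj--Zeriahi, the class is big and the density lies in $L^1$, so we can solve for a full-mass $\psi_s\in\PSH(X,\beta)$ with $\sup_X\psi_s=0$ and
\[
(\beta+\ddc\psi_s)^n=\frac{\vol([\beta])\,(-\varphi-s)_+\,f}{A_s}\,dV .
\]
Next, compare on $\Omega_s$ the function $-\varphi-s$ with $\epsilon(-\psi_s+\Lambda)^{n/(n+1)}$, where $\epsilon=(A_s/\vol([\beta]))^{1/(n+1)}\cdot c_n$. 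The inequality $-\varphi-s\le\epsilon(-\psi_s+\Lambda)^{n/(n+1)}$ comes from the comparison principle for non-pluripolar products of $\beta$-psh functions with full mass. Here $\beta\ge0$ is used so that the convex-combination test function stays in $\PSH(X,\beta)$, with no reference K\"ahler form needed.

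From that pointwise bound, the hypothesis $\int e^{-c\psi_s}\le C$ (applied to $\psi_s$) gives
\[
\int_{\Omega_s}\exp\big(c'(-\varphi-s)^{(n+1)/n}/A_s^{1/n}\big)\,dV\le C' .
\]
By H\"older with $\|f\|_p$, this yields an energy decay $A_s\le C''\,\vol(\Omega_s)^{1+\delta}$ in $dV$-measure. De Giorgi iteration on $\phi(s)=\vol(\Omega_s)$, of the type $r\phi(s+r)\le B\phi(s)^{1+\delta}$, then gives $\phi(s)=0$ for $s\ge s_0+S(B)$. The starting level $s_0$ is chosen from the $L^1$ bound $\int|\varphi|\,dV\le C$, which makes $\phi(s_0)$ small.

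The main obstacle is the comparison step in the degenerate setting. First, $\psi_s$ and $\varphi$ are only in $\PSH(X,\beta)$, possibly unbounded a priori, and $\beta$ is not K\"ahler. Second, the density of the auxiliary equation vanishes off $\Omega_s$. To handle this I would approximate: replace $\beta$ by $\beta+\tau\omega$ and $f$ by a smooth positive $f_j$, prove the estimate there with constants uniform in $\tau$ and $j$, and pass to the limit using stability of solutions. The delicate point is that the two hypothesis constants must be checked for $(\beta+\tau\omega)$-psh functions uniformly in $\tau$. I would first try to get this by writing such functions as limits of $\beta$-psh functions via envelopes; failing that, I would prove the comparison directly with the domination principle for full-mass $\beta$-psh functions, which avoids perturbing the class.
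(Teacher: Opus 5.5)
The paper gives no argument for this statement; it is quoted from \cite{LWZ}. Your ``quick route'' is in fact a complete proof relative to Theorem~\ref{thm:GL stability} as the paper states it, and the worry that made you demote it does not apply. There $T$ depends only on $(m,n,A_m(\mu),\norm{\phi}_{L^\infty(X)})$. Take $\phi=0$ and $m=n+1$. Since $\varphi$ is itself admissible in $A_m(\mu)$ and $\mu$ is a probability measure, you get $\norm{\varphi}_{L^\infty(X)}\le T\,A_{n+1}(\mu)^r$. Your H\"older bound controls $A_{n+1}(\mu)$ by $(n,p,c,C,\norm{f}_p)$ alone, so neither $\vol([\beta])$ nor the $L^1$ hypothesis is needed.

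The self-contained Guo--Phong--Tong route, as you wrote it, has a genuine gap in the De Giorgi step: you iterate on the $dV$-volume $\phi(s)=\int_{\Omega_s}dV$.
\begin{itemize}
\item The iteration needs $r\phi(s+r)\le A_s$. But $A_s=\int_{\Omega_s}(-\varphi-s)f\,dV$ only dominates $r\,\mu(\Omega_{s+r})$ with $\mu=f\,dV$. Without a positive lower bound on $f$ it says nothing about $\int_{\Omega_{s+r}}dV$, and no such bound is available: $f\ge0$ is merely in $L^p$, and in the paper's application the densities $f_\tau$ degenerate along $D$ as $\tau\to0$ while uniform bounds are required.
\item Independently, the H\"older estimate $A_s\le\norm{f}_p\norm{(-\varphi-s)\mathbf{1}_{\Omega_s}}_{L^q(dV)}$, with $q=p/(p-1)$, only yields a power of $\int_{\Omega_s}dV$ with exponent below $\frac{n+1}{nq}$. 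That exponent exceeds $1$ only when $p>n+1$.
\end{itemize}
The fix is to work with $\mu$ throughout:
\begin{itemize}
\item Your exponential bound gives $\int_{\Omega_s}(-\varphi-s)^{kq}dV\le C_kA_s^{kq/(n+1)}$, hence $\int_{\Omega_s}(-\varphi-s)^k\,d\mu\le\norm{f}_pC_k^{1/q}A_s^{k/(n+1)}$.
\item H\"older against $\mu$ then gives $A_s\le B\,\mu(\Omega_s)^{1+\delta}$ with $1+\delta=\frac{n+1}{n}(1-\frac1k)>1$ for $k>n+1$.
\item Combine this with $r\mu(\Omega_{s+r})\le A_s$ and the starting bound $\mu(\Omega_{s_0})\le s_0^{-1}\int_X(-\varphi)\,d\mu$. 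The iteration gives $\mu(\{\varphi<-S\})=0$.
\item Since $f$ may vanish on open sets, this does not yet give $\varphi\ge-S$. You need the domination principle: $\mu$ is the normalized Monge--Amp\`ere measure of $\varphi$ and $-S\in\PSH(X,\beta)$.
\end{itemize}

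Finally, the comparison step is not the obstacle you fear, and the perturbation $\beta+\tau\omega$ is unnecessary. The envelope idea cannot work as stated anyway, because limits of $\beta$-psh functions are $\beta$-psh.
\begin{itemize}
\item Both $\varphi$ and $\psi_s$ are bounded: the density of $\psi_s$ is at most $\vol([\beta])\norm{\varphi}_{L^\infty}f/A_s\in L^p$.
\item Take $h=-s-\epsilon(\Lambda-\psi_s)^{n/(n+1)}$ with $\epsilon=(\frac{n+1}{n})^{n/(n+1)}A_s^{1/(n+1)}$ and $\Lambda=\frac{n}{n+1}A_s$. Note that $\vol([\beta])$ cancels in the ratio of the two equations, so it should not enter $\epsilon$.
\item Then $h$ is bounded and $\beta$-psh, and $(\beta+\ddc h)^n>(\beta+\ddc\varphi)^n$ on $\{\varphi<h\}\cap\{f>0\}$.
\item The comparison principle forces $\mu(\{\varphi<h\})=0$, and the domination principle gives $\varphi\ge h$, directly in the class $[\beta]$.
\end{itemize}
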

	We point out that in Di Nezza--Guedj--Guenancia \cite[Theorem 1.1]{DNGG} it is proved the estimate of $\norm{\varphi}_{L^\infty(X)}$ is independent of $\vol([\beta])$.
	
	\vspace{0.4cm}
	Let $\theta$ be a closed real $(1,1)$-form on $X$ so that $[\theta]\in H^{1,1}(X,\RR)$ is big and set
	\[
	V_\theta=\sup\{v\in\PSH(X,\theta):v\le0\}
	\]
	and $v_\theta=\vol([\theta])>0$. We need the uniform $L^{\infty}$-estimate for big classes due to Di Nezza--Guedj--Guenancia \cite{DNGG}.
	
	\begin{theorem}[{\cite[Theorem 1.9]{DNGG}}]\label{thm:uniform}
		Assume $dV$ and $fdV$ are probability measures, and following conditions hold 
		\begin{itemize}
			\item There exist $c,C>0$ so that $\int_Xe^{-c\psi}\leq C$ for $\psi\in\operatorname{PSH}(X,\theta)$, $\sup_X\psi=0$,
			\item $(\int_Xf^pdV)^{\frac{1}{p}}\leq C'$ for some $C'>0$.
		\end{itemize}
		Let $\varphi$ be the unique $\theta$-psh function with minimal singularities so that
		\[\frac{\np{(\theta+dd^c\varphi)^n} }{\vol([\theta])}=fdV,\]
		then $V_\theta-M\le \varphi\le V_\theta$. The constant $M$ depends only
		on $p,n,c,C,C'$.
	\end{theorem}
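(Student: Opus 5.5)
The second inequality should come from the normalization. I take the solution to be normalized by $\sup_X\varphi=0$, as in Theorem~\ref{thm:GL stability} and Theorem~\ref{thm:LWZ}; otherwise it is only unique up to a constant. Then $\varphi$ is a competitor in the envelope defining $V_\theta$, so $\varphi\le V_\theta$. The real content is the lower bound $\varphi\ge V_\theta-M$ with $M$ independent of $\vol([\theta])$. I would prove it by a Ko{\l}odziej--Eyssidieux--Guedj--Zeriahi capacity iteration, run with a capacity \emph{normalized by the volume}. Since both $\np{(\theta+\ddc\varphi)^n}/\vol([\theta])$ and the capacity below have total mass at most $1$, the volume should cancel throughout.

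\emph{Step 1: normalized capacity and a Skoda-type estimate.} For a Borel set $K$ put
\[
\operatorname{Cap}(K)=\sup\Big\{\tfrac{1}{\vol([\theta])}\int_K\np{(\theta+\ddc u)^n}:\ u\in\PSH(X,\theta),\ V_\theta-1\le u\le V_\theta\Big\}\le1 .
\]
\begin{itemize}
\item I would prove $dV(K)\le C_1\exp\bigl(-c_1\operatorname{Cap}(K)^{-1/n}\bigr)$ with $C_1,c_1$ depending only on $(n,c,C)$.
\item The tool is the relative extremal function $V_K^*$. Using $\np{(\theta+\ddc V_K^*)^n}=0$ off $K$ and comparison, one should get $\sup_X V_K^*\le C_2\operatorname{Cap}(K)^{-1/n}$.
\item The uniform integrability hypothesis $\int_X e^{-c\psi}\,dV\le C$ applies to $\psi=V_K^*-\sup_X V_K^*$, which is $\le -\sup_X V_K^*$ on $K$. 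This gives the exponential bound.
\end{itemize}
The point to check is that every constant involves the ratio of Monge--Amp\`ere masses to $\vol([\theta])$, and never $\vol([\theta])$ alone.

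\emph{Step 2: domination of the measure and comparison.} By H\"older and Step 1,
\[
\int_K f\,dV\le C'\,dV(K)^{1/q}\le A\operatorname{Cap}(K)^{2},\qquad \tfrac1p+\tfrac1q=1 ,
\]
where $A$ depends only on $(p,n,c,C,C')$.

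Next I would use the comparison principle for non-pluripolar products in big classes, applied to $\varphi$ and $(1-\tfrac{1}{\lambda})V_\theta+\tfrac{1}{\lambda}u$ with $u$ a capacity competitor. Here $\varphi$ has minimal singularities, so $\varphi-V_\theta$ is bounded and comparison is legitimate. This should give, for $s>0$ and $0<\tau<1$,
\[
\tau^n\operatorname{Cap}(\{\varphi<V_\theta-s-\tau\})\le\int_{\{\varphi<V_\theta-s\}}f\,dV .
\]
The $\vol([\theta])$ factors on both sides cancel.

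\emph{Step 3: De Giorgi iteration.} Set $g(s)=\operatorname{Cap}(\{\varphi<V_\theta-s\})^{1/n}$. Steps 1 and 2 give $\tau g(s+\tau)\le A^{1/n}g(s)^{2}$. The standard lemma then gives $g(s)=0$ for $s\ge s_0+C_3$, provided $g(s_0)$ is small.

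To make $g(s_0)$ small, use $\sup_X\varphi=0$ and the integrability hypothesis, which give $\int_X|\varphi|\,dV\le C_4$. Step 2 with $\tau=1$, together with H\"older, then bounds $\operatorname{Cap}(\{\varphi<V_\theta-s-1\})$ by $C'\,dV(\{\varphi<-s\})^{1/q}\le C'(C_4/s)^{1/q}$; here I use $V_\theta\le 0$, so that $\{\varphi<V_\theta-s\}\subset\{\varphi<-s\}$. This is small for large $s$.

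Capacity zero implies Monge--Amp\`ere measure zero, hence Lebesgue measure zero. So $\varphi\ge V_\theta-M$ off a null set, and therefore everywhere, since both sides are quasi-psh.

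\emph{Main obstacle.} I expect the hardest step to be Step 1: the volume-independent bound $\sup_X V_K^*\lesssim\operatorname{Cap}(K)^{-1/n}$ in a big class. The standard Guedj--Zeriahi proof uses masses $\vol([\theta])$ that degenerate. I would first try the Di Nezza--Guedj--Guenancia trick of working throughout with the normalized measures $\np{(\theta+\ddc u)^n}/\vol([\theta])$. If that fails, I would compare with the auxiliary solution $\np{(\theta+\ddc u_K)^n}=\vol([\theta])\,\mathbf 1_K\,dV/dV(K)$ and invoke Theorem~\ref{thm:GL stability} to bound $u_K$ from below.
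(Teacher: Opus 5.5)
\textbf{There is a genuine gap, in Step 1: the key inequality points the wrong way.} The rest of your architecture is the standard Ko{\l}odziej/Eyssidieux--Guedj--Zeriahi route behind \cite{DNGG}, which the paper only quotes without proof. That architecture consists of a capacity normalized by $\vol([\theta])$, the domination $\int_E f\,dV\le A\operatorname{Cap}(E)^2$, the comparison lemma $\tau^n\operatorname{Cap}(\{\varphi<V_\theta-s-\tau\})\le\int_{\{\varphi<V_\theta-s\}}f\,dV$, and the De Giorgi-type iteration. Steps 2 and 3 are fine, including the cancellation of $\vol([\theta])$.

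Write $M=\sup_X V_K^*$. Applying the integrability hypothesis to $V_K^*-M$ gives $dV(K)\le Ce^{-cM}$. To turn this into $dV(K)\le C_1\exp(-c_1\operatorname{Cap}(K)^{-1/n})$ you need a \emph{lower} bound $M\gtrsim\operatorname{Cap}(K)^{-1/n}$.

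The upper bound $M\le C_2\operatorname{Cap}(K)^{-1/n}$ that you set out to prove, and call the main obstacle, gives nothing when combined with $dV(K)\le Ce^{-cM}$. It is also false for $n\ge 2$. Take a thin polydisc $K_r=\{|z_1|\le r,\ |z_j|\le 1\ (j\ge 2)\}$ in a chart of $\mathbb P^n$ with the Fubini--Study form. Then $M\simeq\log(1/r)$, but $\operatorname{Cap}(K_r)\simeq(\log(1/r))^{-1}$, so $\operatorname{Cap}(K_r)^{-1/n}\simeq(\log(1/r))^{1/n}$. The sharp comparison is Guedj--Zeriahi's $\operatorname{Cap}(K)^{-1/n}-1\le M\le A\operatorname{Cap}(K)^{-1}$, and only the left-hand inequality is needed.

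\textbf{The fix.} That left-hand inequality is elementary and automatically volume-free. For compact non-pluripolar $K$ one has $V_\theta\le V_K^*\le V_\theta+M$.
If $M\ge 1$, set $u=(1-M^{-1})V_\theta+M^{-1}V_K^*-1$. Then $V_\theta-1\le u\le V_\theta$. Moreover $\np{(\theta+\ddc u)^n}\ge M^{-n}\np{(\theta+\ddc V_K^*)^n}$, which is a measure of total mass $\vol([\theta])$ carried by $K$. Hence $\operatorname{Cap}(K)\ge M^{-n}$, and no volume factor survives.
If $M<1$, then $V_K^*-1$ is itself a competitor, so $\operatorname{Cap}(K)=1$.

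This gives Step 1 with $c_1=c$ and $C_1=\max\{C,e^{c}\}$. It needs no comparison argument and no appeal to Theorem~\ref{thm:GL stability}. With it, the rest of your proof goes through.

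Two small points to tidy:
\begin{enumerate}
\item $V_K^*\le 0$ holds on $K$ only outside a pluripolar set. This is harmless, because the integrability hypothesis forces $dV$ not to charge such sets.
\item In the last step, the fact you need is that sets of zero capacity are pluripolar, hence Lebesgue-null. Only then does your mean-value argument upgrade $\varphi\ge V_\theta-M$ from almost everywhere to everywhere.
\end{enumerate}
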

	
	Finally, we present the uniform Skoda's integrability theorem due to Di Nezza--Guedj--Guenancia \cite{DNGG}.
	\begin{theorem}\label{thm:DNGG uniform Skoda}(\cite[Proposition 3.3, Theorem 3.4]{DNGG})
		Let $p:X\to \Delta$ be a submersion to the unit disk where $(X,\omega)$ is K\"ahler. There exist $c,C>0$ so that for every $t\in \Delta_{\frac{1}{2}}$, $\psi_t\in \operatorname{PSH}(X_t,\omega_t)$ with $\sup_{X_t}\psi_t=0$ it holds that
		\[
		\int_{X_t}|\psi_t|\omega^n_t\leq C,\qquad \int_{X_t}e^{-c\psi_t}\omega^n_t\leq C.
		\]
		
	\end{theorem}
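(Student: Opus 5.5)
The plan is to reduce both bounds to classical local statements about plurisubharmonic functions on balls in $\mathbb C^n$, made uniform in $t$ by a finite covering of the compact set $K:=p^{-1}(\overline{\Delta}_{3/4})$. Since $p$ is a proper holomorphic submersion, every point of $K$ has a holomorphic chart $\Omega\simeq B(0,2)\times V$ in which $p$ is the last coordinate. On $\Omega$ we write $\omega=\ddc g$ with $g$ smooth and $|g|\le C_0$. For each $t$, the slice $B(0,2)\times\{t\}$ is then a holomorphic chart of $X_t$. On this slice, if $\psi_t\in\PSH(X_t,\omega_t)$, the function $u_t:=\psi_t+g(\cdot,t)$ is an honest psh function of $z\in B(0,2)$. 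By compactness we choose finitely many such charts $\Omega_1,\dots,\Omega_N$ whose "half-size" pieces $B(0,1)\times V_k$ already cover $K$. All constants below depend only on $C_0$, $N$, the charts and $\omega$, never on $t$ or $\psi_t$.

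\emph{Step 1: the uniform $L^1$ bound.} Fix $t\in\Delta_{1/2}$ and $\psi_t$ with $\sup_{X_t}\psi_t=0$. Then $u_t\le C_0$ on every slice chart. Let $x_0$ be a point where $\psi_t$ attains $0$ (it is usc on a compact fiber); in the chart containing $x_0$ we have $u_t(x_0)\ge -C_0$. The classical estimate for psh functions on a ball bounded above by $C_0$ and bounded below by $-C_0$ at one point of the half ball gives $\int_{B(0,3/2)}|u_t|\le C_1$. To propagate this, we use overlaps: if $\|u\|_{L^1(B')}\le A$ on an open set $B'$ meeting another chart, the submean inequality on a small ball in the overlap produces a point with $u\ge -A'$. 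Applying the same one-point estimate in the next chart then bounds $\|u\|_{L^1}$ there. Because the fibers are connected (the ones with $X_t$ disconnected are handled component by component, each component being met by one of finitely many chains), the chart covering of $X_t$ is connected through overlaps. Every chart is therefore reached by a chain of length at most $N$, and after at most $N$ steps we obtain $\int_{X_t}|\psi_t|\,\omega_t^n\le C$. The only delicate point is uniformity of the overlaps in $t$. We handle it by choosing, for each pair of charts $\Omega_k\cap\Omega_l\ne\emptyset$, a fixed ball of product form $B\times V$ inside the overlap, which is possible after shrinking the $V$'s.

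\emph{Step 2: uniform Skoda.} Step 1 shows that the family $\{u_t\}$ on each slice $B(0,3/2)$ is bounded above and bounded in $L^1$, uniformly in $t\in\Delta_{1/2}$. For such families two facts are standard. First, the Lelong numbers are uniformly bounded, since $\nu(u,x)$ is controlled by the mass of $\ddc u$ on a fixed ball, and that mass is controlled by the $L^1$ norm. Second, the uniform version of Skoda's theorem (Zeriahi's compactness form) provides $c>0$ and $C$ such that $\int_{B(0,1)}e^{-cu_t}\,d\lambda\le C$ for all members of the family. Summing over the $N$ charts and using $e^{-c\psi_t}\le e^{cC_0}e^{-cu_t}$ and $\omega_t^n\le C\,d\lambda$ on each slice yields $\int_{X_t}e^{-c\psi_t}\omega_t^n\le C$.

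The main obstacle is Step 1, namely the chain argument propagating the normalization $\sup\psi_t=0$ from one chart to the whole fiber uniformly in $t$. Alternatively, one can argue by contradiction: take $t_j\to t_\infty$ and $\psi_j$ with $\int|\psi_j|\to\infty$. Hartogs' lemma in the slice charts, applied to the psh functions $u_{t_j}$ of the variable $z$ (uniformly bounded above), then forces $u_{t_j}\to-\infty$ locally uniformly on a chart containing the limit of the maximum points. This contradicts $\sup\psi_j=0$. We would present this compactness version if the explicit chain constants become cumbersome.
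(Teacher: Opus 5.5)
Your argument is correct in substance. It differs from the paper mainly because the paper gives no argument at all: it imports the statement from Di Nezza--Guedj--Guenancia. There the result is proved for families whose total space and fibers may be singular, so the product charts $B(0,2)\times V$ on which your proof rests are not available. For the smooth proper submersion used in this paper, your route is a legitimate self-contained replacement. It uses product charts, a Hartogs-type one-point estimate to propagate the normalization $\sup_{X_t}\psi_t=0$ through the connected fiber, uniform Lelong bounds from the $L^1$ bound, and then Zeriahi's uniform Skoda theorem. Your version shows exactly where properness and connectedness of the fibers enter, while the citation gives generality at no cost to the paper.

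Three points need repair.

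(1) Your parenthetical about disconnected fibers is false. If $X_t=A\sqcup B$, the function equal to $0$ on $A$ and to $-M$ on $B$ is $\omega_t$-psh with supremum $0$, and both integrals tend to $+\infty$ with $M$. Connectedness of the fibers is genuinely needed. Here it holds because $X$ is connected and, by Ehresmann, $X$ is diffeomorphic to $X_0\times\Delta$.

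(2) The uniformity device ``a fixed product ball inside each overlap after shrinking the $V$'s'' does not work as stated. The set $\Omega_k\cap\Omega_l\cap X_t$ may shrink or become empty as $t$ moves in $V_k\cap V_l$, so no fixed ball serves all $t$. Use instead a Lebesgue number $\delta$ for the cover of $p^{-1}(\overline{\Delta}_{1/2})$ by the half-size pieces: every subset of diameter $<\delta$ lies in one piece. Along any path in $X_t$ with consecutive points $\delta/4$-close, consecutive pieces then overlap in a fiber ball of radius $\delta/12$. The overlap graph therefore has uniformly fat edges, and a shortest chain has length at most $N$. On such a ball, averaging $|u|$ produces a point in the next half-size piece where $\psi_t\ge -A'$. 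The submean inequality is the wrong tool here, since it only gives upper bounds at points.

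(3) In your compactness variant, Hartogs' lemma in a single chart only yields $u_{t_j}\to-\infty$ in the chart where the $L^1$ norms blow up. To reach the chart containing the limit of the maximum points, you must propagate the Hartogs dichotomy through overlaps across the connected limit fiber $X_{t_\infty}$. This is the same connectedness argument, made easier because the overlap pattern is stable for $j$ large.
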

	
	We retain the notations in Theorem~\ref{prop:model}.
	Fix a disk $B$ with $\overline B\Subset
	\Delta^\circ$.
	Fix a K\"ahler form $\eta$ on $Z$.
	For each $D_i$, choose a smooth Hermitian metric $h_i$ on
	$\OO_Z(D_i)$ and let $s_i$ be its canonical section. Put
	$\Theta_D=\sum_i d_i\Theta_{h_i}(D_i)$. By
	\eqref{eq:adjointZ}, there is some $h\in C^{\infty}(Z,\mathbb{R})$ such that
	\begin{equation}\label{eq:h}
		\theta_\eta+b=P+\Theta_D+\ddc h,
		\qquad b=\mu^*\beta\ge0.
	\end{equation}
	For $\varepsilon,\tau>0$ we set
	\begin{equation}\label{eq:f}
		f:=e^{-h}\prod_i|s_i|_{h_i}^{2d_i},\qquad
		P_\varepsilon:=P+\varepsilon\eta,\qquad
		f_\tau:=e^{-h}\prod_i(|s_i|_{h_i}^2+\tau)^{d_i}.
	\end{equation}
	Then $f_\tau>0$ is smooth, $f_\tau\downarrow f$ uniformly on
	$q^{-1}(\overline B)$ as $\tau\downarrow0$, and $f>0$ outside $D$.
	Let $u_{\varepsilon,\tau,t}$ be the unique
	smooth solution of
	\begin{equation}\label{eq:smoothMA}
		(P_{\varepsilon,t}+\ddc u_{\varepsilon,\tau,t})^n
		=e^{u_{\varepsilon,\tau,t}}f_{\tau,t}\eta_t^n.
	\end{equation}
	Existence and uniqueness follow from the theorem of Aubin and Yau
	\cite{Aubin,Yau}. These solutions define
	a jointly smooth function $u_{\varepsilon,\tau}(z,t)$ near
	$q^{-1}(\overline B)$.

	\begin{lemma}\label{lem:C0}
		There is a constant $C_B$ independent of $0<\varepsilon,\tau\le1$
		such that
		\[
		\norm{u_{\varepsilon,\tau}}_{L^\infty(q^{-1}(\overline B))}\le C_B.
		\]
		For fixed $\varepsilon$, the functions $u_{\varepsilon,\tau}$ converge uniformly on $q^{-1}(\overline B)$
		to a continuous function $u_\varepsilon$.
		As $\varepsilon\downarrow0$, $u_\varepsilon$ decreases pointwise on
		$q^{-1}(\overline B)$ to a bounded upper-semicontinuous function $u$.
		Moreover, for every $t\in\overline B$, the function $u_t$ is the unique bounded
		$P_t$-psh solution of
		\begin{equation}\label{eq:residualMA}
			(P_t+\ddc u_t)^n=e^{u_t}f_t\eta_t^n.
		\end{equation}
	\end{lemma}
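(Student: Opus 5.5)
We will derive the uniform bound from the uniform estimates of Section~\ref{sec:uniform estimate of MA}, after rewriting \eqref{eq:smoothMA} as a normalized equation, and then obtain the limits by monotonicity and stability.

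\emph{Uniform bound.} Put $v_{\varepsilon,t}=\int_{Z_t}P_{\varepsilon,t}^n$. Since $P$ is semipositive, this is bounded above and below on $\overline B$, uniformly in $\varepsilon\le1$, because $v_{\varepsilon,t}\ge\int_{Z_t}P_t^n=\vol([P_t])>0$ is constant by Theorem~\ref{prop:model}.

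For the upper bound, write $u_{\varepsilon,\tau,t}=\psi+c$ with $\sup_{Z_t}\psi=0$. Integrating \eqref{eq:smoothMA} gives
\[
v_{\varepsilon,t}=e^{c}\int_{Z_t}e^{\psi}f_{\tau,t}\eta_t^n .
\]
To bound $c$ from above, we need $\int e^{\psi}f_{\tau,t}\eta_t^n\ge\int e^{\psi}f_t\eta_t^n$ bounded below uniformly. We get this from the uniform $L^1$ bound of Theorem~\ref{thm:DNGG uniform Skoda}, applied on $Z$ with $P_\varepsilon\le C\eta$, together with Jensen's inequality. The point is that $f_t\eta_t^n$ has total mass and $L^p$-density bounded below and above uniformly on $\overline B$, since $D$ is relative SNC there.

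For the lower bound on $c$, use $\int e^{\psi}f_\tau\le\int f_1\le C$.

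Hence the function $F=e^{u}f_\tau/v_\varepsilon$ has uniformly bounded $L^p$ norm for some $p>1$, namely any $p$ at all, since $f_\tau\le C$. Theorem~\ref{thm:LWZ} (or the volume-independent version of \cite{DNGG}) then gives $\|\psi\|_\infty\le C$. For this we need the uniform Skoda constants of Theorem~\ref{thm:DNGG uniform Skoda} for $P_{\varepsilon,t}$-psh functions, which hold because these are $C\eta_t$-psh. This yields $C_B$.

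\emph{Limits.} The map $\tau\mapsto f_\tau$ is increasing. By the comparison principle for \eqref{eq:smoothMA}, applied at a maximum point of the difference, $u_{\varepsilon,\tau}$ is decreasing as $\tau\downarrow0$. Similarly $u_{\varepsilon,\tau}$ is decreasing in $\varepsilon$, since $P_{\varepsilon'}\le P_\varepsilon$ for $\varepsilon'<\varepsilon$ makes a $P_{\varepsilon'}$-solution a subsolution for the $P_\varepsilon$ equation.

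Uniform convergence in $\tau$ for fixed $\varepsilon$ comes from stability. By Theorem~\ref{thm:GL stability}, or the standard $L^1$-to-$L^\infty$ stability for the Kähler class $[P_{\varepsilon,t}]$, we get $\sup|u_{\varepsilon,\tau}-u_{\varepsilon,\tau'}|\le C\|f_\tau-f_{\tau'}\|^{r}_{L^1}$, uniformly in $t\in\overline B$. Since $u_{\varepsilon,\tau}$ is smooth in $(z,t)$, the limit $u_\varepsilon$ is continuous.

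The decreasing limit $u=\lim_{\varepsilon\downarrow0}u_\varepsilon$ is usc and bounded by $C_B$. Fiberwise, $u_{\varepsilon,t}\downarrow u_t$ with $u_t$ bounded and $P_t$-psh. Monge--Ampère operators are continuous along bounded decreasing sequences (Bedford--Taylor), and $e^{u_{\varepsilon,t}}\to e^{u_t}$ boundedly, so \eqref{eq:residualMA} holds.

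Uniqueness of bounded solutions follows from the comparison principle for the semipositive big form $P_t$, using the monotone factor $e^{u}$. In our plan this is the standard domination argument: on $\{v<w\}$, compare $\int_{\{v<w\}}(P_t+\ddc w)^n\le\int_{\{v<w\}}(P_t+\ddc v)^n$ against the strict inequality $e^{w}>e^{v}$ there.

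\emph{Main obstacle.} The delicate step is the uniform $L^\infty$ bound. We must check that the Skoda and $L^1$ constants and the density bounds are uniform simultaneously in $t\in\overline B$, $\varepsilon$ and $\tau$, even though the classes $[P_{\varepsilon,t}]$ vary and $f$ vanishes on $D$. The intended remedy is to dominate everything by the fixed Kähler form $C\eta$ and to use the constancy of $\vol([P_t])$.
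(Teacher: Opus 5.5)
Your outline follows the paper's proof closely. For the uniform bound you use the same steps: split off the fiberwise supremum, bound it above by Jensen plus the uniform $L^1$ bound of Theorem~\ref{thm:DNGG uniform Skoda}, bound it below using $\int e^{\psi}f_\tau\eta_t^n\le\int f_1\eta_t^n$ and the constancy of $\int_{Z_t}P_t^n$, then apply Theorem~\ref{thm:LWZ}. The subsolution argument in $\varepsilon$ and the Bedford--Taylor passage to \eqref{eq:residualMA} also match. Your comparison-principle argument for uniqueness is a welcome addition, since the paper leaves it implicit.

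The $\tau$-step contains an error: the monotonicity is reversed. For $\tau'<\tau$ one has $f_{\tau'}\le f_\tau$. At a maximum point of $u_{\varepsilon,\tau}-u_{\varepsilon,\tau'}$ one gets $e^{u_{\varepsilon,\tau}}f_\tau\le e^{u_{\varepsilon,\tau'}}f_{\tau'}\le e^{u_{\varepsilon,\tau'}}f_\tau$, hence $u_{\varepsilon,\tau'}\ge u_{\varepsilon,\tau}$. So the family \emph{increases} as $\tau\downarrow0$. It must, because $\int_{Z_t}e^{u_{\varepsilon,\tau,t}}f_{\tau,t}\eta_t^n=V_{\varepsilon,t}$ does not depend on $\tau$, while $f_{\tau'}<f_\tau$ strictly.

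This matters because you assert the two-sided estimate $\sup|u_{\varepsilon,\tau}-u_{\varepsilon,\tau'}|\le C\|f_\tau-f_{\tau'}\|_{L^1}^r$ rather than derive it. Theorem~\ref{thm:GL stability} does not give it directly: it is one-sided, and its right-hand side is $\int(\phi-\varphi)_+\,d\mu$, not a norm of $f_\tau-f_{\tau'}$. The paper closes this gap using the correct monotonicity. Take $\varphi=u_{\varepsilon,\tau,t}$ and $\phi=u_{\varepsilon,\tau',t}$, both shifted by $\sup u_{\varepsilon,\tau,t}$, and $\mu_{\tau,t}=V_{\varepsilon,t}^{-1}e^{u_{\varepsilon,\tau,t}}f_{\tau,t}\eta_t^n$. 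Then $x\le e^x-1$ and the mass identity give
\[
\int_{Z_t}(u_{\varepsilon,\tau',t}-u_{\varepsilon,\tau,t})\,d\mu_{\tau,t}\le\frac{1}{V_{\varepsilon,t}}\int_{Z_t}e^{u_{\varepsilon,\tau',t}}\bigl(f_{\tau,t}-f_{\tau',t}\bigr)\eta_t^n\le\frac{e^{C_B}}{v}\,\|f_{\tau,t}-f_{\tau',t}\|_{L^1(Z_t,\eta_t^n)},
\]
while $(u_{\varepsilon,\tau,t}-u_{\varepsilon,\tau',t})_+=0$. The constant $T$ is uniform in $t$ and $\tau$ because $A_{n+1}(\mu_{\tau,t})$ is controlled by the bounded density and the uniform Skoda bound.

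With the direction you state, the same computation gives a nonpositive bound, which is a contradiction and another way to see that the monotonicity is reversed. Without monotonicity, you could instead bound $\int(u_1-u_2)_+\,d\mu_2$ via the comparison principle on $\{u_2<u_1\}$, but some such step must be supplied. Your fallback to the standard $L^1$-to-$L^\infty$ stability in a fixed K\"ahler class does not apply as stated. The measure $e^{u}f_\tau\eta_t^n$ depends on the unknown, and its $L^1$ distance is exactly what has to be controlled.
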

	
	\begin{proof}
		We first establish the uniform estimate for $|u_{\varepsilon,\tau}|$ by uniform integrability Theorem~\ref{thm:DNGG uniform Skoda} of smooth K\"ahler family and $L^{\infty}$ estimate Theorem~\ref{thm:LWZ}.
		Then the stability Theorem~\ref{thm:GL stability} proves the uniform convergence of $u_{\varepsilon,\tau}$ as $\tau\to0$, $\varepsilon$ being fixed.
		Finally comparison principle proves $u_{\varepsilon}$ is decreasing as $\varepsilon\to0$ and the limit $u$ satisfies Monge--Amp\`ere equation \eqref{eq:residualMA} by Bedford--Taylor convergence theorem.
		
		\noindent\textbf{Step 1: the uniform $L^{\infty}$-estimate of solutions.}
		There is a fixed $C_0$ such that $0\le P_\varepsilon\le C_0\eta$
		on $q^{-1}(\overline B)$ for $0<\varepsilon\le1$. Hence every
		$P_{\varepsilon,t}$-psh function is $C_0\eta_t$-psh.
		Apply the uniform integrability Theorem~\ref{thm:DNGG uniform Skoda} we know that, for
		$w\in\PSH(Z_t,P_{\varepsilon,t})$ with $\sup_{Z_t}w=0$, $t\in\overline{B}$
		\begin{equation}\label{eq:uniform Skoda}
			\int_{Z_t}|w|\eta_t^n\le C_1,
			\qquad
			\int_{Z_t}e^{-c_1w}\eta_t^n\le C_1
		\end{equation}
		holds for some uniform constants $c_1,C_1>0$.
		
		Write $u_{\varepsilon,\tau,t}=M_{\varepsilon,\tau,t}+w_{\varepsilon,\tau,t}$ with
		$M_{\varepsilon,\tau,t}:=\sup_{Z_t}u_{\varepsilon,\tau,t}$.
		x
		We first prove $M_{\varepsilon,\tau,t}$ is uniformly bounded. 
		The functions $f_\tau$ are uniformly bounded above, and
		\[
		0<c_2\le I_{\tau,t}:=\int_{Z_t}f_{\tau,t}\eta_t^n\le C_2.
		\]
		Indeed, $f_\tau\ge f$, the restriction of $f$ to each fiber is
		positive outside the divisor $D_t$, and
		$t\mapsto\int_{Z_t}f_t\eta_t^n$ is positive and continuous on
		$\overline B$. Jensen's inequality for the probability measure
		$I_{\tau,t}^{-1}f_{\tau,t}\eta_t^n$ now gives
		\[
		\int_{Z_t}e^{w_{\varepsilon,\tau,t}} f_{\tau,t}\eta_t^n
		\ge I_{\tau,t}
		\exp\left(I_{\tau,t}^{-1}
		\int_{Z_t}w_{\varepsilon,\tau,t} f_{\tau,t}\eta_t^n\right)\ge c_3>0.
		\]
		It is also clear that $\int_{Z_t}e^{w_{\varepsilon,\tau,t}} f_{\tau,t}\eta_t^n\leq C_2$. 
		Moreover,
		\[
		e^{M_{\varepsilon,\tau,t}}
		\int_{Z_t}e^w f_{\tau,t}\eta_t^n
		=V_{\varepsilon,t}:=\int_{Z_t}P_{\varepsilon,t}^n,
		\qquad
		0<v:=\int_{Z_t}P_t^n\le V_{\varepsilon,t}\le C_3,
		\]
		where $v$ is independent of $t$ by Theorem~\ref{prop:model}.
		Thus $|M_{\varepsilon,\tau,t}|$ is uniformly bounded.
		Apply the $L^\infty$-estimate Theorem~\ref{thm:LWZ} to $w_{\varepsilon,\tau,t}$.
		The density functions $e^{u_{\varepsilon,\tau,t}}f_{\tau,t}$  have a uniform $L^p$ bound for any fixed
		$p>1$ and the volumes $\vol([P_{\varepsilon,t}])$ is uniformly bounded.  Consequently $\norm{w_t}_{L^\infty(Z_t)}$
		is uniformly bounded, proving our first assertion.

		\medskip
		\noindent\textbf{Step 2: let $\tau\downarrow0$.}
		Fix $\varepsilon>0$. 
		If $0<\tau'<\tau\leq 1$, then $f_{\tau'}\leq f_{\tau}$ and comparison principle (see e.g. \cite[Lemma 2.5]{DDNL18}) imply $u_{\varepsilon,\tau',t}\geq u_{\varepsilon,\tau,t}$ for every $t$.
		The probability measure $\mu_{\tau,t}:=\frac{e^{u_{\varepsilon,\tau,t}}f_{\tau,t}\eta_t^n}{V_{\varepsilon,t}}\leq \frac{e^{C_B}L\eta_t^n}{v}$ where $L\geq\norm{f_1}_{L^{\infty}}\geq\norm{f_{\tau}}_{L^{\infty}}$.
		Using uniform integrability \eqref{eq:uniform Skoda} we get to know 
		\[A_{n+1}(\mu_{\tau,t})^{n+1}:=\sup_{\psi\in\operatorname{PSH}(Z_t,\eta_t)}\Big\{ \int_{Z_t}|\psi_t-\sup_{Z_t}\psi_t|^{n+1}d\mu_{\tau,t}\Big\} \leq \frac{e^{C_B}L}{v}\frac{(n+1)!}{c_1^{n+1}}C_1.
		\]
		From $u_{\varepsilon,\tau',t}\geq u_{\varepsilon,\tau,t}$, the stability estimate Theorem~\ref{thm:GL stability} gives 
		\[
		\sup(u_{\varepsilon,\tau',t}-u_{\varepsilon,\tau,t})
		=\sup(u_{\varepsilon,\tau',t}-u_{\varepsilon,\tau,t})_+
		\leq T\Big(\int_{Z_t}(u_{\varepsilon,\tau',t}-u_{\varepsilon,\tau,t})_+d\mu_{\tau,t}\Big)^\gamma
		\]
		where $\gamma$ depends only on $n$ and $T$ depends only on $n,v,c_1,C_1,C_B,L$.
		We compute that
		\[
		\int_{Z_t}(u_{\varepsilon,\tau',t}-u_{\varepsilon,\tau,t})d\mu_{\tau,t}
		\leq 
		\int_{Z_t}(e^{u_{\varepsilon,\tau',t}-u_{\varepsilon,\tau,t}}-1)d\mu_{\tau,t}
		=\frac{1}{V_{\varepsilon,t}}\int_{Z_t}(e^{u_{\varepsilon,\tau',t}}-e^{u_{\varepsilon,\tau,t}})f_{\tau,t}\eta_t^n.
		\]
		Using the equations we easily see $\int_{Z_t}e^{u_{\varepsilon,\tau,t}}f_{\tau,t}\eta_t^n=\int_{Z_t}e^{u_{\varepsilon,\tau',t}}f_{\tau',t}\eta_t^n=V_{\varepsilon,t}$, therefore 
		\[
		\sup(u_{\varepsilon,\tau',t}-u_{\varepsilon,\tau,t})
		\leq 
		T\Big(\frac{1}{V_{\varepsilon,\tau}}\int_{Z_t}e^{u_{\varepsilon,\tau',t}}(f_{\tau,t}-f_{\tau',t})\eta_t^n \Big)^{\gamma}\leq T\big(\frac{e^{C_B}}{v}\norm{f_{\tau,t}-f_{\tau',t}}_{L^1(Z_t,\eta_t^n)}\big)^{\gamma}
		\]
		
		\[
		\norm{u_{\varepsilon,\tau,t}-u_{\varepsilon,\tau',t}}
		_{L^\infty(Z_t)}
		\le T
		\norm{f_{\tau,t}-f_{\tau',t}}_{L^p(Z_t,\eta_t^n)}^{\gamma}.
		\]
		Since $f_\tau\to f$ uniformly on $q^{-1}(\overline B)$, this proves the
		uniform convergence of $u_{\varepsilon,\tau}$ on
		$q^{-1}(\overline B)$. The limit $u_\varepsilon$ is therefore continuous,
		and the Bedford--Taylor convergence theorem \cite{BT82} gives
		\[
		(P_{\varepsilon,t}+\ddc u_{\varepsilon,t})^n
		=e^{u_{\varepsilon,t}}f_t\eta_t^n,
		\qquad t\in \overline{B}.
		\]
		
		\medskip
		\noindent\textbf{Step 3: let $\varepsilon\downarrow0$.}
		If $\varepsilon'>\varepsilon$, then $u_{\varepsilon,t}$ is a
		subsolution for the corresponding $\varepsilon'$-equation.
		The comparison principle gives
		$u_{\varepsilon,t}\le u_{\varepsilon',t}$.
		The decreasing limit $u$ is upper semicontinuous.
		The Bedford--Taylor convergence theorem therefore proves \eqref{eq:residualMA}.
	\end{proof}
	
	\vspace{0.4cm}
	For a continuous function $\psi$ on $q^{-1}(\partial B)$, we define the Perron envelope with boundary value $\psi$ as follows.
	\[
	\mathcal P_{\varepsilon,B}[\psi]
	:=\left(\sup\left\{V\in\PSH(q^{-1}(B),P_\varepsilon):
	\limsup_{q^{-1}(B)\ni x\to\xi}V(x)\le\psi(\xi)
	\text{ for every }\xi\in q^{-1}(\partial B)\right\}\right)^*.
	\]
	For fixed $\varepsilon,\tau>0$, let $U_{\varepsilon,\tau}$ be the Perron envelope with boundary value $u_{\varepsilon,\tau}|_{q^{-1}(\partial B)}]$. 
	By Chu--Tosatti--Weinkove \cite[Corollary 1.3]{CTW17} (see also \cite[Theorems 1.1 and 2.1]{CTW18}), $U_{\varepsilon,\tau}\in C^{1,1}(q^{-1}(\overline B))$ and satisfies
	\begin{equation}\label{eq:HCMA}
		T:=P_\varepsilon+\ddc U_{\varepsilon,\tau}\ge0,
		\qquad T^{n+1}=0\quad\text{on }q^{-1}(B),
		\qquad
		U_{\varepsilon,\tau}|_{q^{-1}(\partial B)}
		=u_{\varepsilon,\tau}|_{q^{-1}(\partial B)}.
	\end{equation}
	
	\begin{lemma}\label{lem:UC0}
		For every $\varepsilon,\tau>0$ we have
		\begin{equation}\label{eq:UC0}
			|U_{\varepsilon,\tau}|\leq C_B+AR^2:=C_{B}^{\prime}
		\end{equation} 
		where $C_B$ is in Lemma~\ref{lem:C0} and $A$ only depends on $P$ and $\eta$.
	\end{lemma}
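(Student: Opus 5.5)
We bound the envelope $U_{\varepsilon,\tau}$ from above and below by explicit barriers. The barriers are built from the boundary data $u_{\varepsilon,\tau}$ and a potential for a positive form in the base direction. Here $R$ denotes the radius of the disk $B$, centred at $t_0$, and $|t-t_0|^2$ is pulled back by $q$. The key point is that $P_\varepsilon$ need not be positive transversally to the fibres, but $\eta$ is. So we fix $A>0$ such that
\[
P_\varepsilon+\ddc u_{\varepsilon,\tau}+A\,\ddc|t-t_0|^2\ge0
\]
is \emph{not} what we want, since this would require control of transverse derivatives of $u_{\varepsilon,\tau}$. Instead we work with the constant barriers $\pm C_B$ corrected by $A|t-t_0|^2$, using only $P\ge0$ on $Z$.

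\textbf{Lower bound.} Since $P_\varepsilon=P+\varepsilon\eta\ge0$ on $Z$, the constant function $-C_B$ is $P_\varepsilon$-psh on $q^{-1}(B)$. By Lemma~\ref{lem:C0} we have $u_{\varepsilon,\tau}\ge -C_B$ on $q^{-1}(\partial B)$. Hence $-C_B$ is a competitor in the Perron family, and $U_{\varepsilon,\tau}\ge -C_B\ge -C_B'$. No correction term is needed here.

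\textbf{Upper bound.} Let $V$ be any competitor. On each fibre $Z_t$, $t\in B$, the restriction $V|_{Z_t}$ is $P_{\varepsilon,t}$-psh, but we cannot compare fibrewise with boundary values directly. Instead we use a maximum-principle argument on the total space. First choose $A$, depending only on $P$ and $\eta$ (uniformly for $\varepsilon\le1$, since $P_\varepsilon\le P+\eta$), together with a smooth function $\rho$ on a neighbourhood of $q^{-1}(\overline B)$ such that
\[
P+\eta\le \ddc\rho \quad\text{and}\quad 0\le\rho\le A R^2 .
\]
One natural choice is $\rho=A|t-t_0|^2+\chi$, with $\chi$ a fixed local potential. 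The simplest route is to fix, once and for all, a smooth strictly psh exhaustion-type function on a neighbourhood of $q^{-1}(\overline B)$. Such a function exists after shrinking, since the total space over a disk is Kähler and $q$ is proper: take $A|t-t_0|^2$ plus a bounded fibrewise correction. Then
\[
V-\rho+\sup\rho
\]
is psh on $q^{-1}(B)$, because $\ddc V\ge -P_\varepsilon\ge-\ddc\rho$. On the boundary its limsup is at most $C_B+\sup\rho$. The classical maximum principle for psh functions on the relatively compact domain $q^{-1}(B)$ then gives
\[
V\le C_B+\operatorname{osc}\rho\le C_B+AR^2 .
\]
Taking the upper semicontinuous regularisation of the supremum over all competitors yields $U_{\varepsilon,\tau}\le C_B'$.

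\textbf{Main obstacle.} The delicate step is producing $\rho$ with $\ddc\rho\ge P+\eta$ on all of $q^{-1}(\overline B)$ and oscillation controlled by $AR^2$. A compact fibre carries no global potential for the fibrewise Kähler form $\eta_t$, so the pure base function $A|t-t_0|^2$ cannot dominate $P$ in fibre directions.

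I would try to avoid this obstacle as follows. Compare $V$ not with a constant but fibrewise with $\sup_{Z_t}V$, and observe that
\[
t\mapsto \sup_{Z_t}\bigl(V-u_{\varepsilon,\tau}\bigr)
\]
is subharmonic up to a bounded Laplacian error. Alternatively, and more simply, note that $\sup_{Z_t}V(\cdot,t)$ is subharmonic in $t$ plus $A|t|^2$. This follows from Kiselman/psh-pushforward for $V+A|t|^2$, where $A$ is chosen so that $P_\varepsilon\le \ddc(\text{fibrewise-bounded potential})+A\,\ddc|t|^2$ in the base direction. The one-variable maximum principle on $B$ then gives the bound $C_B+AR^2$.
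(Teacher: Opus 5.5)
Your lower bound is the paper's argument. The upper bound, as written, has a gap.

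The function $\rho$ with $\ddc\rho\ge P+\eta$ near $q^{-1}(\overline B)$ cannot exist after any shrinking. Its restriction to a compact fibre $Z_t$ would be a smooth strictly plurisubharmonic function on a compact manifold, which is impossible at its maximum point. Separately, $\ddc V\ge-\ddc\rho$ makes $V+\rho$ plurisubharmonic, not $V-\rho$. You notice the first problem under ``Main obstacle'', but your repair has the same defect. You justify it by ``Kiselman/psh-pushforward for $V+A|t|^2$'', yet $V+A|t|^2$ is not plurisubharmonic on the total space, because $A|t|^2$ contributes nothing in fibre directions. Read as an inequality of forms or currents, $P_\varepsilon\le\ddc\chi+A\,\ddc|t|^2$ with $\chi$ fibrewise bounded is impossible for the same reason as $\rho$. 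Read only for the $dt\wedge d\bar t$-component, it feeds into no pushforward theorem. Your other alternative, using $\sup_{Z_t}(V-u_{\varepsilon,\tau})$, needs transverse derivatives of $u_{\varepsilon,\tau}$. Those are not controlled uniformly in $\varepsilon,\tau$, so $A$ would not depend only on $P,\eta$.

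The missing idea is to test plurisubharmonicity only along horizontal discs through the point where the fibre supremum is attained. Fix a competitor $V$ and put $M(t)=\sup_{Z_t}V$.
\begin{itemize}
\item By properness, $M$ is upper semicontinuous. By properness and the boundary condition on $V$, $\limsup_{t\to\partial B}M\le C_B$.
\item Take $t_1\in B$ with $M(t_1)>-\infty$, and pick $x_1=(z_1,t_1)$ with $V(x_1)=M(t_1)$ in one of finitely many fixed product charts. Set $\sigma(t)=(z_1,t)$.
\item Then $(a_\varepsilon+V)\circ\sigma$ is subharmonic and $\sigma^*P_\varepsilon$ is bounded by $A\,\ddc|t|^2$. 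Here $A$ bounds the $dt\wedge d\bar t$-coefficient of $P+\eta$ in those charts, so it depends only on $P,\eta$. Hence $V\circ\sigma+A|t-b_0|^2$ is subharmonic near $t_1$.
\item This gives the local sub-mean inequality at $t_1$ for $N(t)=M(t)+A|t-b_0|^2$. So $N$ is subharmonic on $B$, with $\limsup_{t\to\partial B} N\le C_B+AR^2$.
\item The one-variable maximum principle yields $V\le C_B+A(R^2-|q-b_0|^2)$. Taking the regularized supremum over competitors gives the lemma.
\end{itemize}

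This is exactly the paper's barrier $\overline U=C_B+A(R^2-|q-b_0|^2)$. The paper argues instead by contradiction at an interior maximum of $U_{\varepsilon,\tau}-\overline U$. There $a_\varepsilon+\overline U$ touches the plurisubharmonic $a_\varepsilon+U_{\varepsilon,\tau}$ from above, forcing $P_\varepsilon+\ddc\overline U\ge0$, which fails in the horizontal direction for large $A$. Both arguments rest solely on the bound for the horizontal component of $P_\varepsilon$.
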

	\begin{proof}
		The constant $-C_B$ in Lemma~\ref{lem:C0} is clearly a candidate of $U_{\varepsilon,\tau}$ thus $U_{\varepsilon,\tau}\geq-C_B$.
		
		On the other hand,
		set
		\[
		\overline U:=C_B+A\bigl(R^2-|q-b_0|^2\bigr).
		\]
		It is clear that $U_{\varepsilon,\tau}\leq C_B=\overline{U}$ on $q^{-1}(\partial B)$.
		A sufficiently large $A$ makes $P_\varepsilon+\ddc\overline U$ negative in the horizontal direction at every point of $q^{-1}(\overline B)$.
		If $U_{\varepsilon,\tau}>\overline U$ somewhere, then $U_{\varepsilon,\tau}-\overline U$ attains a positive maximum at an
		interior point. 
		Let $A_{\varepsilon}$ be a local potential of $P_\varepsilon$,
		the smooth function $A_{\varepsilon}+\overline U$ would touch
		psh function $A_{\varepsilon}+U_{\varepsilon,\tau}$ from above at that point. The complex Hessian $P_{\varepsilon}+dd^c\overline{U}$ of $A_{\varepsilon}+\overline{U}$ must be
		nonnegative, a contradiction. 
		Hence $U_{\varepsilon,\tau}\le\overline U$. This proves the uniform estimate of $U_{\varepsilon,\tau}$.
	\end{proof}
	Let $\hat{u}:=\lim_{\tau\to0}\lim_{\varepsilon\to0}U_{\varepsilon,\tau}$. We will finish the proof of plurisubharmonicity of $u$ by showing that for every $a\in B$
	\begin{equation}\label{equ:sup solution}
		(P+dd^c\hat{u}_a)^n\leq e^{\hat{u}_a}f_a\eta_a^n.
	\end{equation}

	\section{An estimate of Perron envelope by Bergman kernels}\label{sec:Perron estimate}
	Throughout this section 
	we fix $\varepsilon,\tau>0$, set
	\[
	U:=U_{\varepsilon,\tau},\qquad
	T:=P_\varepsilon+\ddc U,\qquad S:=T^n.
	\]
	To prove \eqref{equ:sup solution}, we need to establish a upper bound of $\log^+\frac{T_a^n}{e^{U_a}f_a\eta^n_a}$ (see Lemma~\ref{lem:green}).
	We need the following simple observation.
	
	\begin{lemma}\label{lem:kernel}
		Let $\Omega^+\times V\subset q^{-1}(B)$ be a product coordinate
		chart of $Z$, with coordinates $(z,t)$ such that $q(z,t)=t$, and choose a ball $\Omega\Subset\Omega^+$. 
		On this chart we choose $a_\varepsilon$ with $\ddc a_\varepsilon=P_\varepsilon$ and put
		$v:=a_\varepsilon+U$. Write $\eta_t^n=\rho(z,t)d\lambda_z$.
		For $t\in V$, let $K_{kv}(z,t)$ be the weighted Bergman kernel
		on $\Omega$, namely
		\[
		K_{kv}(z,t)=
		\sup\left\{|F(z)|^2:F\in\OO(\Omega),\quad
		\int_\Omega |F|^2e^{-kv(\cdot,t)}d\lambda_z\le1\right\}.
		\]
		Using $c_n=n!$, we define a function $F_k$ on
		$\Omega\times V$
		\[
		F_k:=\log\frac{c_nk^{-n}K_{kv}e^{-kv}}{e^Uf\rho},
		\qquad F_k^+:=\max\{F_k,0\}.
		\]
		Then $F_k^+$ is locally integrable with respect to $S$ and
		\begin{equation}\label{eq:currentineq}
			\ddc(F_k^+S)\ge- S \wedge [D].
		\end{equation}
	\end{lemma}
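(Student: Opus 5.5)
The plan is to compute $\ddc F_k$ term by term and then wedge with $S=T^n$. The degeneracy $T^{n+1}=0$ will kill the only negative contribution, which comes from the weight $kv$. Write
\[
F_k=\bigl(\log K_{kv}-kv\bigr)-U-\log f-\log\rho+\log(c_nk^{-n}).
\]

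\textbf{Step 1 (Bergman kernel term).} The weight $kv(z,t)$ is plurisubharmonic jointly in $(z,t)$ on $\Omega\times V$, since $\ddc v=T\ge0$. The domain $\Omega\times V$ is a product of a ball with a disc. By Berndtsson's theorem on the plurisubharmonic variation of Bergman kernels (the Maitani--Yamaguchi phenomenon for product domains), $\log K_{kv}$ is plurisubharmonic on $\Omega\times V$. Hence $\ddc(\log K_{kv}-kv)\ge -kT$. Wedging with $S=T^n$ and using $T^{n+1}=0$ gives
\[
\ddc(\log K_{kv}-kv)\wedge S\ge0 .
\]
All products here make sense in Bedford--Taylor theory, because $U\in C^{1,1}$ by \cite{CTW17}. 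So $T$ has $L^\infty$ coefficients and $S$ is a positive measure with bounded density. Moreover, $\log K_{kv}$ is a locally bounded psh function, since $v$ is bounded and $\Omega\Subset\Omega^+$.

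\textbf{Step 2 (denominator).} First, $-\ddc U=P_\varepsilon-T$, whose wedge with $S$ is $P_\varepsilon\wedge S\ge0$. Next, by the Poincar\'e--Lelong formula, $\ddc\log|s_i|^2_{h_i}=[D_i]-\Theta_{h_i}(D_i)$. Together with \eqref{eq:f} this gives
\[
-\ddc\log f=\ddc h+\Theta_D-[D].
\]
Finally, $-\ddc\log\rho$ is the curvature form $\theta_\eta$ of the metric on $K_{Z/\Delta}$ induced by $\eta$, up to the paper's sign convention. Combining these via \eqref{eq:h}, the smooth part becomes
\[
P_\varepsilon+\theta_\eta+\ddc h+\Theta_D .
\]
In the paper's normalization this should reduce to a semipositive combination, of the form $P_\varepsilon+P+b$ up to terms annihilated by $T\wedge S$. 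Getting this sign bookkeeping right is a point to check carefully. Assuming it works, we obtain
\[
\ddc F_k\wedge S\ge -S\wedge[D]
\]
on $\Omega\times V$.

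\textbf{Step 3 (positive part and integrability).} Near $\operatorname{Supp}D$ we have $-\log f\to+\infty$, while the other terms stay bounded. So $F_k^+=F_k$ on a neighbourhood of $D$, and the $[D]$ term appears only there. Away from $D$, $F_k$ is a difference of bounded quasi-psh pieces. There we approximate $\max\{F_k,0\}$ by regularized maxima $\max_\delta(F_k,0)$, which are convex nondecreasing in each argument. The inequality $\ddc\max_\delta(F_k,0)\wedge S\ge0$ then follows from Step 2, and we pass to the limit $\delta\to0$ weakly. Integrability of $F_k^+$ against $S$ is immediate: $S$ has bounded density, $K_{kv}$, $U$ and $\rho$ are bounded above and below, and $\log^+(1/f)$ has only logarithmic singularities along $D$.

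\textbf{Main obstacle.} I expect the main difficulty to be the rigorous justification of $\ddc(F_k^+S)$. Here $F_k$ is only a difference of psh-type functions, not smooth, and it must be multiplied against the merely $L^\infty$ current $S$. I would handle this with Bedford--Taylor continuity along decreasing smooth approximations of $\log K_{kv}$ and of $\log|s_i|^2$ (e.g. $\log(|s_i|^2+\delta)$), keeping $S$ fixed. The inequalities above are closed under this weak convergence.
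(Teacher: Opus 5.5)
Your argument is correct in substance and uses the same ingredients as the paper: Berndtsson's plurisubharmonicity of $\log K_{kv}$, the cancellation $T^{n+1}=0$, and the identity \eqref{eq:h}. Two points need comment.

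\textbf{The sign in Step 2 is off.} The metric on $K_{Z/\Delta}$ induced by $\eta$ has local weight $\log\rho$, so $\theta_\eta=\ddc\log\rho$ and $-\ddc\log\rho=-\theta_\eta$. The smooth part is therefore
\[
P_\varepsilon-\theta_\eta+\ddc h+\Theta_D=P_\varepsilon-P+b=\varepsilon\eta+b\ge0
\]
by \eqref{eq:h}. So $P$ cancels against $P_\varepsilon$ instead of adding to it. With your sign you would get $\varepsilon\eta+b+2\theta_\eta$, which has no sign. After the correction,
\[
\ddc F_k=\ddc\log K_{kv}-(k+1)T+\varepsilon\eta+b-[D].
\]
This is exactly the paper's statement $\ddc\Psi_\varepsilon=b+\varepsilon\eta$ for $\Psi_\varepsilon=a_\varepsilon+\ell_D-\log(f\rho)$, where $\ell_D=\sum_i d_i\log|f_i|^2$.

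\textbf{The positive part is where your route genuinely differs.} Write $F_k=A-B$ with
\[
A=\log(c_nk^{-n}K_{kv})+\Psi_\varepsilon,\qquad B=(k+1)v+\ell_D .
\]
Both $A$ and $B$ are psh, but $F_k$ itself is not quasi-psh. As you anticipate, $\ddc\max_\delta(F_k,0)\wedge S\ge0$ therefore needs a chain rule $\ddc\chi(F)\wedge S\ge\chi'(F)\,\ddc F\wedge S$, and that requires regularity.

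Your plan can be completed, but only under these conditions:
\begin{enumerate}
\item You must smooth $\log K_{kv}$ alone and keep $v$ exact, since regularizing $v$ destroys $T^{n+1}=0$.
\item You must use $U\in C^{1,1}$ essentially. Off $D$ the approximants are then $W^{2,\infty}$, and the chain rule holds almost everywhere.
\item You need $T\wedge S=0$ a.e. This holds because the Bedford--Taylor measure of a $C^{1,1}$ psh function equals its pointwise determinant.
\item Finally, glue with the neighbourhood of $D$ where $F_k^+=F_k$, using locality of positivity.
\end{enumerate}

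The paper sidesteps all of this with one identity:
\[
F_k^+=\max\{A,B\}-B=W_k-(k+1)v-\ell_D,
\]
where $W_k=\max\{A,B\}$ is a locally bounded psh function. Then
\[
\ddc(F_k^+S)=\ddc W_k\wedge S-(k+1)T^{n+1}-S\wedge[D]\ge-S\wedge[D]
\]
is a linear Bedford--Taylor computation. It needs no chain rule, no splitting near and away from $D$, and no smoothing of the Bergman kernel. The paper invokes $C^{1,1}$ only to see that $\ell_D$ is $S$-integrable.

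Your chain-rule route is the standard one for convex functions of an ``$S$-subharmonic'' function. It would also handle other convex nondecreasing $\chi$ away from $D$. Here, though, it costs regularity bookkeeping that the max trick avoids entirely.
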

	\begin{remark}
		The Monge--Amp\`ere product $S\wedge[D]:=(dd^cv)^n\wedge [D]$ is well-defined because $v$ is locally bounded.
		The coordinate chart $\Omega^+\times V$ is allowed to be adjacent to $\partial B$, which means $\partial V$ may attach $\partial B$.
	\end{remark}
	\begin{proof}
		Let $f_i$ be the defining function of $D_i$ on $W$, and set
		\[
		\ell_D:=\sum_{i}d_i\log|f_i|^2,
		\qquad
		\Psi_\varepsilon:=a_\varepsilon+\ell_D-\log(f\rho).
		\]
		A direct computation by \eqref{eq:h} shows that $\ddc\Psi_\varepsilon=b+\varepsilon\eta\ge0$.
		Berndtsson's theorem \cite[Theorem 1.1]{Ber06} shows $\log K_{kv}$ is plurisubharmonic in $(z,t)$. Consequently
		\[
		W_k=\max\{\log(c_nk^{-n}K_{kv})+\Psi_\varepsilon,
		(k+1)v+\ell_D\}
		\]
		is plurisubharmonic and locally bounded.
		Since $v=a_{\varepsilon}+U$ is $C^{1,1}$, the coefficients of $S=(dd^cv)^n$ are locally bounded, so $\ell_D$ is integrable against $S$. 
		We compute that
		\[
		\ddc(F_k^+S)
		=\ddc W_k\wedge S-(k+1)T^{n+1}-S \wedge [D]
		\ge-S \wedge [D].
		\]
		Here we used both $\ddc W_k\wedge S\ge0$, the homogeneous
		Monge--Amp\`ere equation $T^{n+1}=0$ and
		the identity $\ddc(\ell_DS)=S \wedge [D]$, which can be obtained by regularizing $v$.
	\end{proof}
	\begin{remark}
		In Lemma~\ref{lem:kernel}, the Bergman kernel function $\log K_{kv}$ can be replaced by any bounded plurisubharmonic function. The property of $\log K_{kv}$ is used later.
	\end{remark}
	
	\vspace{0.3cm}
	Fix $a\in B=B(b_0,R)$, 
	the (non-negative) Green function of $B$ with pole at $a$ is defined as
	\[
	g_a(t):=
	\begin{cases}
		\displaystyle\log\left|
		\frac{R-\frac{\overline{(a-b_0)}(t-b_0)}{R}}{t-a}\right|^2,
		&t\in B,\\[4pt]
		0,&t\in\CC\setminus B.
	\end{cases}
	\]
	$g_a$ satisfies on $\CC$,
	\[
	\ddc g_a=\nu_a-\delta_a,\qquad
	d\nu_a(b_0+Re^{i\theta})=
	\frac{1-|a_B|^2}{|e^{i\theta}-a_B|^2}
	\frac{d\theta}{2\pi}.
	\]
	$\nu_a$ is a probability measure supported on $\partial B$.
	In a neighborhood of $\overline B$, $g_a=O(1+|\log|t-a||)$ and
	$|dg_a|=O(1+|t-a|^{-1})$.
	
	\vspace{0.4cm}
	Write
	\[
	\mu_a=T_a^n,\qquad \lambda_a=e^{U_a}f_a\eta_a^n.
	\]
	They are measures on $Z_a$ with bounded density functions. 
	The following estimate is one of the technical hearts of our proof.
	\begin{lemma}\label{lem:green}
		For every $a\in B$,
		\begin{align}\label{eq:entropy}
			\int_{Z_a}\log^+\!\left(\frac{d\mu_a}{d\lambda_a}\right)d\mu_a
			\le{}&
			\int_{\partial B}\left[
			\int_{Z_t}e^{u_{\varepsilon,\tau,t}}f_{\tau,t}
			\log\!\left(\frac{f_{\tau,t}}{f_t}\right)\eta_t^n
			\right]d\nu_a(t)\notag\\
			&+\int_{q^{-1}(B)}(q^*g_a)\,T^n\wedge[D].
		\end{align}
	\end{lemma}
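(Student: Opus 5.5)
The plan is to integrate the current inequality \eqref{eq:currentineq} of Lemma~\ref{lem:kernel} against the Green function $q^*g_a$, and then let $k\to\infty$ so that the Bergman kernel term $c_nk^{-n}K_{kv}e^{-kv}$ recovers the Monge--Amp\`ere density of $T_t$ on each fiber.

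\textbf{Step 1: globalize the local functions.} On each product chart $\Omega\times V$ the quotient $c_nk^{-n}K_{kv}e^{-kv}/\rho$ is a local approximation of the fiber density $T_t^n/\eta_t^n$. Since $v$ is $C^{1,1}$, local Bergman kernel asymptotics (Tian--Zelditch--Catlin type, or the cruder pointwise bounds from Ohsawa--Takegoshi plus the mean value inequality, as in Berman--Berndtsson) give
\[
c_nk^{-n}K_{kv}e^{-kv}\,d\lambda_z\to (\ddc_z v)^n
\]
almost everywhere on interior points of $\Omega$, with uniform upper bounds. Consequently $F_k^+\to \log^+\bigl(dT_t^n/d\lambda_t\bigr)$ a.e. The inequality \eqref{eq:currentineq} is local, so I would cover $q^{-1}(\overline B)$ by finitely many charts and use a partition of unity $\{\chi_j\}$ in the $z$-variables. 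The errors coming from derivatives of the $\chi_j$ involve $dF_k^+\wedge d^c\chi_j$. I would avoid these terms entirely by working instead with the limit function $F^+:=\log^+(d\mu_t/d\lambda_t)$, which is globally defined on $q^{-1}(B)$. The aim is to show that $\ddc(F^+S)\ge -S\wedge[D]$ holds globally as currents. Locally this follows because $F_k^+S\to F^+S$ weakly; dominated convergence applies since $F_k^+$ is uniformly bounded, given that $K_{kv}e^{-kv}k^{-n}$ is bounded by the $C^{1,1}$ norm of $v$. Positivity-type inequalities between currents are preserved under weak limits, and the resulting inequality is local, so it globalizes.

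\textbf{Step 2: Green's formula on the base.} I would push forward along $q$. The function $\Phi(t):=\int_{Z_t}F^+\,d\mu_t=q_*(F^+S)$ then satisfies $\ddc\Phi\ge -q_*(S\wedge[D])$ on $B$. Pairing this with $g_a$, which is nonnegative, vanishes on $\partial B$, and satisfies $\ddc g_a=\nu_a-\delta_a$, gives
\[
\Phi(a)\le\int_{\partial B}\Phi\,d\nu_a+\int_B g_a\,q_*(S\wedge[D]),
\]
and the last term equals $\int_{q^{-1}(B)}q^*g_a\,T^n\wedge[D]$. To justify this rigorously I would truncate $g_a$ to $\min(g_a,N)$ and use a cutoff near $\partial B$, controlling the boundary terms by the continuity of $U$ up to $q^{-1}(\partial B)$ and by the $C^{1,1}$ bounds of Chu--Tosatti--Weinkove. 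One subtle point is that $\Phi$ is only subharmonic up to a measure, so evaluating it at $a$ requires upper semicontinuity. I would obtain this by applying the argument on small disks around $a$, or by proving the statement for a.e. $a$ and then using lower semicontinuity of entropy under weak convergence of $\mu_t$.

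\textbf{Step 3: identify the boundary term.} On $q^{-1}(\partial B)$ we have $U=u_{\varepsilon,\tau}$, and $T_t$ restricted to the boundary fibers should coincide with $P_{\varepsilon,t}+\ddc u_{\varepsilon,\tau,t}$. This gives $d\mu_t/d\lambda_t=f_{\tau,t}/f_t\ge1$, so the $\log^+$ becomes a plain $\log$ and $\mu_t=e^{u_{\varepsilon,\tau,t}}f_{\tau,t}\eta_t^n$. This yields exactly the first term of \eqref{eq:entropy}.

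\textbf{Main obstacle.} The hardest point is justifying that the fiberwise restriction of $T$ converges to the boundary data as $t\to\partial B$, that is, that $\Phi$ is continuous up to $\partial B$. $C^{1,1}$ regularity only gives weak convergence of $T_t^n$, and entropy is merely lower semicontinuous. To handle this, I would first try to use the smoothness of the data to obtain a barrier showing that $U-u_{\varepsilon,\tau}=O(\operatorname{dist}(t,\partial B))$. Failing that, I would exploit the fact that in the Green formula only an upper bound for the boundary limit is needed, and obtain that bound from upper semicontinuity of $F_k^+$-type integrals taken before passing to the limit $k\to\infty$.
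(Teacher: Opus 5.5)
\textbf{The gap is at the boundary of $B$.} Your overall plan matches the paper: integrate \eqref{eq:currentineq} against $q^*g_a$, let $k\to\infty$, and read off the boundary term from $U=u_{\varepsilon,\tau}$ on $q^{-1}(\partial B)$. Your Step~1 globalization $\ddc(F^+S)\ge -S\wedge[D]$ is also legitimate, with two corrections. First, $F_k^+$ is not uniformly bounded; it is only dominated by $C_{\varepsilon,\tau}+\sum_i d_i\log^-|f_i|^2$, which is uniformly integrable on fibers because $D$ is relative SNC. Second, for a $C^{1,1}$ weight you need the Berman--Berndtsson $L^1$ convergence, not Tian--Zelditch--Catlin asymptotics.

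The real problem is the one you flag yourself, and your ordering of the limits creates it. Because you let $k\to\infty$ \emph{before} applying Green's formula, $\Phi(t)=\int_{Z_t}F^+T_t^n$ is only lower semicontinuous. Indeed, $U\in C^{1,1}(q^{-1}(\overline B))$ gives bounded but not continuous fiber Hessians, so $T_t^n\to T_\xi^n$ only weakly as $t\to\xi\in\partial B$. Lower semicontinuity is enough at the interior point $a$. The Green inequality, however, needs the opposite control near $\partial B$: $\Phi$ must be bounded above, asymptotically, by its boundary values. Nothing in your argument supplies this. The barrier bound $U-u_{\varepsilon,\tau}=O(\operatorname{dist}(t,\partial B))$ is only a $C^0$ statement; it already follows from Lipschitz continuity, and it says nothing about an entropy built from second derivatives.

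\textbf{How the paper avoids this.} The paper keeps $k$ finite in the Green step. Set $H_k(t)=\sum_j\int_{Z_t}\chi_jF_{j,k}^+T_t^n$; this function is continuous on $\overline B$ for four reasons:
\begin{itemize}
\item $\|v_j(\cdot,t)-v_j(\cdot,s)\|_\infty\le r$ implies $e^{-kr}K_{kv_j}(\cdot,s)\le K_{kv_j}(\cdot,t)\le e^{kr}K_{kv_j}(\cdot,s)$;
\item $U$ is continuous;
\item $T_t^n\le C_{\varepsilon,\tau}\,d\lambda_z$;
\item $M_j(z)$ dominates $F_{j,k}^+$.
\end{itemize}
Continuity lets you pair with smooth approximations $h_j\to g_a$ and obtain
$H_k(a)\le\int_{\partial B}H_k\,d\nu_a+\int q^*g_a\,S\wedge[D]+E_k$.
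The chart errors $E_k$ arise from integrating by parts against $\chi_j\,q^*h$. They therefore contain $F_{j,k}^+$ itself, not $dF_{j,k}^+$ as you feared.

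Only then let $k\to\infty$, handling each term as follows:
\begin{itemize}
\item $H_k(t)\to\int_{Z_t}F^+T_t^n$ at \emph{every} $t\in\overline B$, boundary fibers included, by fiberwise Berman--Berndtsson convergence.
\item Dominated convergence against $\nu_a$ gives the limit of the boundary term.
\item Dominated convergence against $(1+g_a+|dg_a|)\,dA$ controls $E_k$.
\item Finally $E_k\to\sum_j\int F^+S\wedge(\cdots)=0$, because $F^+$ is chart-independent and $\sum_j\chi_j\equiv1$.
\end{itemize}
No semicontinuity of $\Phi$ is ever needed. Your second fallback points in this direction. Carrying it out, though, means dropping the Step~1 globalization and accepting partition-of-unity errors that vanish only in the limit.
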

	
	\begin{proof}
		
		The key ingredient is Berman--Berndtsson's \cite[Theorem 2.1 and Corollary 2.2]{BB}:
		\begin{theorem}\label{thm:BB}
			Let $B$ be the unit ball in $\CC^n$, $v$ be a bounded plurisubharmonic function defined on $B_{1+\delta}$ so that $\Delta v\in L^{\infty}_{\mathrm{loc}}(B_{1+\delta})$.
			For $k=1,2,\cdots$, let $K_{kv}$ be the weighted Bergman function on $B_{1+\delta/2}$ and 
			\[
			\beta_k:=c_nk^{-n}K_{kv}e^{-kv}d\lambda_z
			\]
			be the Bergman measures.
			Then there is a constant $C$ only depending on $\norm{\Delta v}_{L^{\infty}(B_{1+\delta/2})}$ so that $c_nk^{-n}K_{kv}e^{-kv}\leq C$ for every $k$, moreover $\norm{\beta_k-(dd^cv)^n}_{L^1(B)}\to0$ as $k\to\infty$.
		\end{theorem}
		
		\noindent\textbf{Step 1: Convergence and uniform bound of weighted Bergman kernels on each fiber $Z_t$.}
		Choose finitely many product coordinates $\Omega^+_j\times \overline{V_j}$ to cover
		$q^{-1}(\overline B)$ so that they are adapted to the relative SNC divisor $D$: the defining function of $D$ does not involve the horizontal coordinate function $t_j$.
		We may assume for some smaller balls $\Omega_j\subset\subset\Omega_j^{\prime}\subset\subset\Omega_j^+$, $\Omega_j\times\overline{V}_j$ still cover $q^{-1}(\overline{B})$.
		Let $K_{kv_j}$ be the weighted Bergman kernel on $\Omega_j^{\prime}\times \overline{V_j}$.
		Choose a partition of unity $\{\chi_j\}$ so that $\text{supp}(\chi_j)\subset \Omega_j\times \overline{V_j}$.

		On each $\Omega_j\times \overline{V_j}$ and $t\in\overline{V_j}$, write
		\[
		b_{j,k}(\cdot,t):=c_nk^{-n}K_{kv_j}(\cdot,t)e^{-kv_j(\cdot,t)},\qquad
		T_t^n:=r_j(\cdot,t)d\lambda_z,\qquad
		e^{U_t}f_t\eta_t^n:=d_j(\cdot,t)d\lambda_z.
		\]
		Since $v_j\in C^{1,1}(\Omega_j^+\times \overline{V_j})$, by Berman--Berndtsson' theorem we get to know
		\begin{equation}\label{eq: Bergman bound}
			\norm{b_{j,k}(\cdot,t)-r_j(\cdot,t)}_{L^1(\Omega_j,d\lambda_z)}
			\longrightarrow0,\qquad 0\le b_{j,k},r_j\le C_{\varepsilon,\tau}
		\end{equation}
		where the constant $C_{\varepsilon,\tau}$ only depends on $\norm{dd^c_zv_j}_{L^{\infty}(\Omega_j^{\prime}\times\overline{V_j})}\leq \norm{U}_{C^{1,1}(q^{-1}(\overline{B}))}$.
		
		By definition of $F_{j,k}$ in Lemma~\ref{lem:kernel} we have
		\[
		F_{j,k}^+=\log^+(b_{j,k}/d_j)
		\]
		and set $F^+:=\log^+(r_j/d_j)=\log^+(\frac{T^n_t}{e^{U_t}f_t\eta_t^n})$, which is actually independent of the coordinate chart. 
		Since $d_j>0$
		off $D$, the preceding $L^1$ convergence \eqref{eq: Bergman bound} implies the $L^1$ convergence
		of $F_{j,k}^+\to F^+$ on each fiber and the uniform upper bound:
		\[
		0\le F_{j,k}^+,F^+\le
		M_j:=C_{\varepsilon,\tau}
		+\sum_{i}d_i\log^-|f_{j,i}|^2 \qquad \text{on $\Omega_j\times \overline{V_j}$}.
		\]
		where $f_{j,i}$ are defining functions of divisors $D_i$ on $\Omega_j\times V_j$.
		Since $D$ is relative SNC, each $f_{j,i}$ only involves vertical coordinate functions $z_j$. Consequently
		$M_j=M_j(z_j)$ is uniformly integrable in $z$-variable for $t\in\overline{V_j}$.
		Dominanted convergence theorem gives
		\[
		\int_{\Omega_{j,t}}|F_{j,k}^+-F^+|\,d\lambda_z\longrightarrow0,
		\qquad
		\int_{\Omega_{j,t}}|F_{j,k}^+-F^+|\,T_t^n\longrightarrow0,
		\]
		the second assertion uses $T_t^n\le C_{\varepsilon,\tau}d\lambda_z$.
		
		\medskip
		\noindent\textbf{Step 2: Convergence of Bergman kernel on the total space and on the boundary.}
		The fiber integrals in \emph{Step 1.} are bounded uniformly
		in $k,t$. We prove that in each
		product chart $\Omega_j\times V_j$,
		\begin{align*}
			\int_{V_j}\left[
			\int_{\Omega_{j,t}}|F_{j,k}^+-F^+|\,d\lambda_z\right]d\nu_a(t)
			&\longrightarrow0, \qquad k\to\infty\\
			\int_{V_j}(1+g_a+|dg_a|)\left[
			\int_{\Omega_{j,t}}|F_{j,k}^+-F^+|\,d\lambda_z\right]dA(t)
			&\longrightarrow0, \qquad k\to \infty.
		\end{align*}
		Here $dA$ is Euclidean measure on $\Delta$.
		In fact
		$1+g_a+|dg_a|\in L^1(B,dA)$, $F_{j,k}^+,F\leq M_j$ on $\Omega_j\times V_j$ and $(1+g_a+|dg_a|)M_j$ is integrable. The $L^1$-convergence of each fiber together with dominated convergence theorem proves two convergences.
		
		\medskip
		\noindent\textbf{Step 3: deduce the upper bound of $\int_{Z_a}\log^+(\frac{T_a^n}{\lambda_a})d\mu_a$.}
		Set 
		\[
		H_k(t)=\sum_j\int_{Z_t}\chi_jF_{j,k}^+T_t^n,\qquad t\in\overline{B}.
		\]
		This function is continuous on $\overline B$. Indeed the potential $U$ is continuous on $q^{-1}(\overline{B})$, so $T_t^n$ converge weakly, moreover the $C^{1,1}$ property of $U$ gives $T_t^n\le C_{\varepsilon,\tau}d\lambda_z$. 
		The Bergman kernels $K_{kv_j}(\cdot,t)$ depend continuously on $t$: if
		$\norm{v_j(\cdot,t)-v_j(\cdot,s)}_\infty\le r$, its definition gives
		$e^{-kr}K_{kv_j}(z,s)\le K_{kv_j}(z,t)\le e^{kr}K_{kv_j}(z,s)$.
		Outside $D$, $F_{j,k}^+$ are
		continuous and we have $F_{j,k}^+\le M_j$ and $M_j$ is uniformly integrable with respect to $t$ by \emph{Step 1.}. This proves the continuity of $H_k$ as we want.
		
		For any nonnegative $h\in C_c^\infty(B)$, an integration by parts yields that
		\begin{align*}
			\int_B H_kdd^ch
			&=\int_Bq_*\bigl(\sum_j\chi_jF_{j,k}^+T^n\bigl) dd^ch\\
			&=\int_{q^{-1}(B)}\Big(\sum_j dd^c(\chi_jF_{j,k}^+T^n)\Big)q^*h.
		\end{align*}
		By \eqref{eq:currentineq}, $dd^c(F_{j,k}^+T^n)\geq-S\wedge[D]$ on every coordinate chart, an integration by parts gives us
		\begin{align*}
			\int_B H_k\,\ddc h+E_k(h)
			&\ge-\int_{q^{-1}(B)}q^*h\,S\wedge[D],\\
			E_k(h)&:=\sum_j\int_{\Omega_j\times V_j}
			F_{j,k}^+S\wedge
			\bigl(q^*h\,\ddc\chi_j+q^*(dh)\wedge d^c\chi_j
			+d\chi_j\wedge q^*(d^ch)\bigr).
		\end{align*}
		This inequality still holds for $h=g_a$,
		the reason is that Green function $g_a$ satisfies $g_a,|dg_a|$ are integrable and $g_a $ vanishes on the boundary $\partial B$.
		We can approximate $g_a$ by a sequence of smooth functions $h_j\geq0$ with compact supports in $B$ so that $\norm{h_j-g_a}_{W^{1,1}(B)}\to0$ and $dd^ch_j\to dd^cg_a$ in the sense of measure.
		Then $\int_BH_kdd^ch_j=\int_{\CC}H_kdd^ch_j$ converge to $\int_{\CC}H_k dd^cg_a=\int_{\partial B}H_k d\nu_a-H_k(a)$.
		The convergence of $E_k(h_j)$ follows from the uniform integrability of $F_{j,k}^+(g_a+|dg_a|)$.
		The convergence $\int_{q^{-1}(B)}(q^*h_j)S\wedge[D]\to \int_{q^{-1}(B)}(q^*g_a)S\wedge[D]$ uses the fact that $S=T^n$ has bounded coefficients on $q^{-1}(B)$.
		As a consequence we obtain:
		\begin{equation}\label{eq:finitegreen}
			H_k(a)\le\int_{\partial B}H_k\,d\nu_a
			+\int_{q^{-1}(B)}q^*g_a\,[D]\wedge S+E_k,
		\end{equation}
		where
		\[
		E_k:=\sum_j\int_{\Omega_j\times V_j}
		F_{j,k}^+S\wedge
		\bigl(q^*g_a\,\ddc\chi_j+q^*(dg_a)\wedge d^c\chi_j
		+d\chi_j\wedge q^*(d^cg_a)\bigr).
		\]
		
		\medskip
		\noindent\textbf{Step 4: pass to the limit.}
		Steps 1 and 2 allow passage to the limit in every term of
		\eqref{eq:finitegreen}. 
		We have
		\begin{align*}      &\lim_{k\to\infty}H_k(a)=\lim_{k\to\infty}\int_{Z_a}\sum_j\chi_jF_{j,k}^+T_a^n=\int_{Z_a}F^+T_a^n,\\
			&\lim_{k\to\infty}\int_{\partial B}H_kd\nu_a=\lim_{k\to \infty}\int_{\partial B}\int_{Z_t}\sum_j\chi_jF_{j,k}^+T_t^nd\nu_a(t)=\int_{\partial B}\int_{Z_t}F^+T^n_td\nu_a(t)\\
			&\lim_{k\to \infty}E_k=\sum_j\int_{ \Omega_j\times V_j}F^+S\wedge\bigl(q^*g_add^c\chi_j+q^*(dg_a)\wedge d^c\chi_j+d\chi_j\wedge q^*(d^cg_a)\bigl)=0.
		\end{align*}
		The last one becomes zero is simply because
		$\sum_j\ddc\chi_j=\sum_jd\chi_j=\sum_jd^c\chi_j=0$. By definition of $F^+,T^n_t$ we have $F^+T^n_t=\log^+(\frac{\mu_t}{\lambda_t})d\mu_t$ on $Z_t$ for every $t\in\overline{B}$.
		For $t\in\partial B$, $U_t=u_{\varepsilon,\tau,t}$ and
		\eqref{eq:smoothMA} give
		\[
		T_t^n=e^{u_{\varepsilon,\tau,t}}f_{\tau,t}\eta_t^n,
		\qquad F^+=\log(f_{\tau,t}/f_t).
		\]
	\end{proof}
	
	We show that the second error term in \eqref{eq:entropy} actually converges to $0$ as $\varepsilon\to0$.
	\begin{lemma}\label{lem:error}
		For each $a\in B$ there is a constant $C_{a,B}$ independent of
		$0<\varepsilon,\tau\le1$ such that
		\begin{equation}\label{eq:error}
			\int_{q^{-1}(B)}(q^*g_a)\,  T^n\wedge[D]
			\le C_{a,B}\sum_i d_i m_{i,\varepsilon},\qquad
			m_{i,\varepsilon}:=\int_{D_{i,t}}P_{\varepsilon,t}^{n-1}.
		\end{equation}
		Each $m_{i,\varepsilon}$ is independent of $t\in B$ and tends to
		zero as $\varepsilon\downarrow0$.
	\end{lemma}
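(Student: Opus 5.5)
We bound the integrand fiber by fiber and then integrate in $t$ against the weight $q^*g_a$. Since $D$ has relative simple normal crossings over $\Delta^\circ\supset\overline B$ and no vertical component over $\overline B$, we can slice the current $[D]$ on $q^{-1}(B)$ by the fibers. The coarea formula then gives
\[
\int_{q^{-1}(B)}(q^*g_a)\,T^n\wedge[D]=\sum_i d_i\int_B g_a(t)\Big(\int_{D_{i,t}}T^n\big/dA\Big)\,dA(t).
\]
More precisely, we write $T^n\wedge[D_i]$ as $q^*(dA)\wedge\sigma_{i,t}$, with $\sigma_{i,t}$ an $(n-1,n-1)$-form on $D_{i,t}$. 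The first step is to identify this slice. Decompose $T=T_t+(\text{terms containing }dt,d\bar t)$. Wedging $T^n$ with $[D_i]$ and keeping only the top degree on $D_i$ forces exactly one factor $idt\wedge d\bar t$. Using Schur-complement positivity of $T\ge0$, i.e. $T\le T_{\rm fib}+c_T\,idt\wedge d\bar t$ with $c_T$ the geodesic-curvature coefficient, the slice is bounded by $n\,c_T\,T_t^{n-1}|_{D_{i,t}}$.

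This naive bound involves $c_T$, which is not uniformly controlled. So instead of bounding pointwise, I would use the homogeneous equation $T^{n+1}=0$ together with integration by parts in $t$ against a smooth cutoff $\chi(t)$. The point is that $q_*(T^n\wedge[D_i])$ is a measure on $B$ whose mass against $g_a$ we need. By Stokes, $\int\chi\,q_*(T^{n}\wedge[D_i])$ differs from $\int \chi\,\ddc_t(\text{fiber integral of }U\,T^{n-1}\wedge[D_i])$ by terms involving $P_\varepsilon^{n}\wedge[D_i]$. Concretely, write $T=P_\varepsilon+\ddc U$ and expand
\[
T^n\wedge[D_i]=P_\varepsilon\wedge T^{n-1}\wedge[D_i]+\ddc\big(U\,T^{n-1}\wedge[D_i]\big).
\]
Iterating, $T^n\wedge[D_i]$ equals $P_\varepsilon^n\wedge[D_i]$ plus $\ddc$ of currents of the form $U\,P_\varepsilon^{j}\wedge T^{n-1-j}\wedge[D_i]$. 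Push forward to $B$ and pair with $g_a$, using $\ddc g_a=\nu_a-\delta_a$. This turns the right side into boundary averages over $\partial B$ and a point evaluation at $a$ of fiber integrals $\int_{D_{i,t}}U\,P_{\varepsilon,t}^j\wedge T_t^{n-1-j}$. Each such term is bounded by $C_B'\int_{D_{i,t}}P_{\varepsilon,t}^{j}\wedge T_t^{n-1-j}$ by Lemma~\ref{lem:UC0}. Because $T_t$ is cohomologous to $P_{\varepsilon,t}$ on the fiber and $D_{i,t}$ is compact, this cohomological integral is exactly $m_{i,\varepsilon}$. The $\delta_a$ term needs care: $g_a$ is singular at $a$, but the pushforward fiber integrals are continuous in $t$, as in Step 3 of Lemma~\ref{lem:green}. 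The remaining term $\int_B g_a\,q_*(P_\varepsilon^n\wedge[D_i])$ has $q_*(P_\varepsilon^n\wedge[D_i])$ bounded by a smooth form times $\int_{D_{i,t}}P_{\varepsilon,t}^{n-1}$-type quantities. To see this, write $P_\varepsilon^n|_{D_i}=n\,P_{\varepsilon,t}^{n-1}\wedge(\text{horizontal part})$ with horizontal part $\le C\eta$. Since $g_a\in L^1(B)$, that term is at most $C_{a,B}m_{i,\varepsilon}$.

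Next comes the independence of $t$ and the limit. Since $D_i\to\Delta^\circ$ is a smooth proper submersion over $B$ and $P_\varepsilon$ is closed on $Z$, $t\mapsto\int_{D_{i,t}}P_{\varepsilon,t}^{n-1}$ is constant; this is the same pushforward argument as Step 5.3 of Theorem~\ref{prop:model}. Expanding the binomial gives
\[
m_{i,\varepsilon}=\int_{D_{i,t}}P_t^{n-1}+\sum_{j\ge1}\binom{n-1}{j}\varepsilon^j\int_{D_{i,t}}P_t^{n-1-j}\wedge\eta_t^j,
\]
whose first term vanishes by \eqref{eq:orthogonal}. Hence $m_{i,\varepsilon}=O(\varepsilon)\to0$.

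The main obstacle is the middle step. We must make the iterated integration by parts rigorous with $U$ only $C^{1,1}$ and the Green weight singular at $a$ and nonzero-gradient at $\partial B$. We must also ensure that the constants come only from $C_B'$ and the geometry of $D$, and not from $\|U\|_{C^{1,1}}$. I would handle this by regularizing $g_a$ as in Lemma~\ref{lem:green} and using that mixed products $U\,P_\varepsilon^j\wedge T^{n-1-j}\wedge[D_i]$ are well defined because $U$ is bounded. The boundary terms are then controlled by $\nu_a$ being a probability measure.
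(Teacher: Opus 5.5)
Your proposal is correct and follows essentially the same route as the paper: pushing the identity $T^n-P_\varepsilon^n=\ddc\bigl(U\sum_{j=0}^{n-1}T^j\wedge P_\varepsilon^{n-1-j}\bigr)$ forward to $B$ and pairing it with $g_a$ via $\ddc g_a=\nu_a-\delta_a$, bounding the fiber integrals by $C_B'\,m_{i,\varepsilon}$, estimating $P_\varepsilon^n|_{D_i}$ through its bounded horizontal coefficient, and using the pushforward argument for the $t$-independence and vanishing of $m_{i,\varepsilon}$ are exactly Steps 1--3 of the paper's proof. One small correction: $P_\varepsilon^n|_{D_i}\le n\,c\,q_i^*\omega_B\wedge P_\varepsilon^{n-1}|_{D_i}$ (with $c$ the $i\,dt\wedge d\bar t$ coefficient) is an inequality coming from the Schur complement, not an equality, and your discarded heuristic $T\le T_{\rm fib}+c_T\,i\,dt\wedge d\bar t$ is false as stated when the mixed terms are nonzero; neither point affects the argument you actually use.
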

	
	\begin{proof}
		The idea is to use the decomposition $T=P_{\varepsilon}+dd^cU$. We first obtain the upper bound of $\int_{q^{-1}(B)}(q^*g_a)P_{\varepsilon}^n\wedge[D]$. Then $U\in C^{1,1}(q^{-1}(B))$ is sufficient to control the difference $T^n-P_{\varepsilon}^n=dd^c\bigl(U\sum_{j=0}^{n-1} T^j\wedge P_{\varepsilon}^{n-1-j}\bigl)$. Put
		$q_i:=q|_{D_i\cap q^{-1}(B)}$.

		\medskip
		\noindent\textbf{Step 1: constancy of the fiber volumes.}
		The map $q_i$ is proper and a submersion, and
		$(q_i)_*(P_\varepsilon^{n-1})$ is a degree-zero current.
		$(q_i)_*$ commutes with $d$, so this current is closed
		and equals a constant $c_{i,\varepsilon}$. For any $t_0\in B$, $[D_{i,t_0}]=dd^c(q_i^*\log|t-t_0|)$, thus the projection formula give
		\begin{align*}
			(q_i)_*\bigl( P_\varepsilon^{n-1}\wedge [D_{i,t_0}] \bigr)
			&=(q_i)_*(P_{\varepsilon}^{n-1}\wedge dd^cq_i^*\log|t-t_0|)
			= (q_i)_*(P_\varepsilon^{n-1})\ddc\log|t-t_0|\\
			&=c_{i,\varepsilon}\delta_{t_0}.
		\end{align*}
		The left side is
		$\bigl(\int_{D_{i,t}}P_{\varepsilon,t}^{n-1}\bigr)\delta_t$.
		Thus $m_{i,\varepsilon}=c_{i,\varepsilon}$ on every fiber.
		Finally, $P_\varepsilon=P+\varepsilon\eta\to P$ and
		\eqref{eq:orthogonal} give
		\[
		\lim_{\varepsilon\to0}m_{i,\varepsilon}
		=\int_{D_{i,t}}P_t^{n-1}=0.
		\]
		
		\medskip
		\noindent\textbf{Step 2: estimate $\int_{q^{-1}(B)}g_a  P_{\varepsilon}^n\wedge[D]$.}
		Since $D$ is relative SNC, $q_i=q|_{D_i}$ is a submersion.
		In an adapted coordinate we see the coefficient of $i dt\wedge d\bar{t}$ of $P_\varepsilon|_{D_i}=(P+\varepsilon \eta)|_{D_i}$ is uniformly bounded for $0<\varepsilon\le1$.
		It follows that for some $C>0$ depending on $B,P,\eta$ so that
		\[
		P_\varepsilon^n|_{D_i}
		\le Cq_i^*\omega_B\wedge P_\varepsilon^{n-1}|_{D_i},
		\]
		where $\omega_B=idt\wedge d\bar{t}$ is the standard K\"ahler form on $B$. By \emph{Step 1},
		\[
		\int_{D_i\cap q^{-1}(B)}q_i^*g_a\,P_\varepsilon^n
		\leq C\int_{D_i\cap q^{-1}(B)}q_i^*(g_a\omega_B)\wedge P_{\varepsilon}^{n-1}
		=C\int_B\big({q_i}_*P_{\varepsilon}^{n-1}\big)g_a\omega_B
		= C m_{i,\varepsilon}\int_Bg_a\omega_B.
		\]

		\medskip
		\noindent\textbf{Step 3: estimate the difference of $T^n,P_{\varepsilon}^n$.}
		Let
		\[
		Q:=\sum_{j=0}^{n-1}T^j\wedge P_\varepsilon^{n-1-j},\qquad
		A_i(t):=\int_{D_{i,t}}U_t
		\sum_{j=0}^{n-1}T_t^j\wedge P_{\varepsilon,t}^{n-1-j}.
		\]
		Then $dd^c(UQ)=T^n-P_{\varepsilon}^n$ and $A_i=(q_i)_*(UQ)$.
		Integration by parts on each compact fiber gives
		\[
		\int_{D_{i,t}}T_t^j\wedge P_{\varepsilon,t}^{n-1-j}
		=m_{i,\varepsilon}\qquad(0\le j\le n-1).
		\]
		The estimate \eqref{eq:UC0} in Lemma~\ref{lem:UC0} consequently implies
		$|A_i(t)|\le nC_B^{\prime}m_{i,\varepsilon}$, $C_B^{\prime}$ depends only on $B,P,\eta$.
		$U$ is continuous on $q^{-1}(\overline{B})$ and Bedford--Taylor
		continuity show that $A_i$ is continuous on $\overline B$.
		Since $dd^c$ commutate with $(q_i)_*$, a direct computation gives
		\[
		\ddc A_i=(q_i)_*\bigl(dd^c(UQ)\bigl)=(q_i)_*(T^n-P_\varepsilon^n).
		\]
		
		Since $T=P_{\varepsilon}+dd^cU$ has bounded coefficients on $q^{-1}(B)$, we know $dd^cA_i$ has bounded coefficient on $B$.
		It follows that
		\begin{align*}
			\int_{D_i\cap q^{-1}(B)}(q_i^*g_a)\,(T^n-P_\varepsilon^n)
			=\int_B g_a (q_i)_*(T^n-P_{\varepsilon}^n)
			&=\int_Bg_a\,\ddc A_i\\
			&=\int_{\partial B}A_i\,d\nu_a-A_i(a).
		\end{align*}
		Hence
		\[
		\left|\int_{D_i\cap q^{-1}(B)}
		q_i^*g_a\,(T^n-P_\varepsilon^n)\right|
		\le 2nC_B^{\prime}m_{i,\varepsilon}.
		\]
		Combining this with \emph{Step 2}, the proof is complete.
	\end{proof}
	
	\section{Proof of plurisubharmonic variation for big adjoint classes}\label{sec:pf of psh variation}
	
	\begin{proposition}\label{prop:analytic}
		We keep the notations in Theorem~\ref{prop:model} and in Section~\ref{sec:uniform estimate of MA}. Then the solutions $u_t$ of equation \eqref{eq:residualMA}
		\[(P_t+dd^cu_t)^n=e^{u_t}f_t\eta_t^n\]
		patch together to give a $P$-psh function $u(z,t):=u_t(z)$ on $q^{-1}(\Delta^\circ)$.
	\end{proposition}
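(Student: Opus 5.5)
The plan is to prove plurisubharmonicity locally: fix a disk $B$ with $\overline B\Subset\Delta^\circ$, and show that $u$ is $P$-psh on $q^{-1}(B)$ by comparing it with the limit of the Perron envelopes $U_{\varepsilon,\tau}$. The point is that $\hat u$ is a $P$-psh function on $q^{-1}(B)$ (a limit of $P_\varepsilon$-psh functions). We will show $\hat u=u$ on $q^{-1}(B)$. Since $B$ is arbitrary and $P$-psh is a local property, this gives the proposition on $q^{-1}(\Delta^\circ)$.

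\emph{Step 1: the entropy vanishes in the limit.} Fix $a\in B$ and combine Lemma~\ref{lem:green} with Lemma~\ref{lem:error}. The boundary term in \eqref{eq:entropy} is at most $e^{C_B}\int_{Z_t}f_{\tau,t}\log(f_{\tau,t}/f_t)\eta_t^n$. Since $f_\tau/f=\prod_i(1+\tau|s_i|^{-2}_{h_i})^{d_i}$, we have $f_\tau\log(f_\tau/f)\le C\sum_i(|s_i|^2+\tau)^{d_i}\log(1+\tau/|s_i|^2)$, which tends to $0$ uniformly in $t\in\partial B$ by dominated convergence. The divisorial term is bounded by $C_{a,B}\sum_i d_im_{i,\varepsilon}\to0$. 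Hence, for $\mu_a=T_a^n$ and $\lambda_a=e^{U_a}f_a\eta_a^n$ built from $U_{\varepsilon,\tau}$, the positive entropy $H_{\varepsilon,\tau}(a)$ tends to $0$ as $\varepsilon,\tau\to0$ in either order. Using the elementary inequality $(x-y)_+\le x\log^+(x/y)+(\text{small})$, more precisely $\int(d\mu_a-d\lambda_a)_+\le C\sqrt{H}+H$ by a Pinsker-type bound on the set $\{d\mu_a>d\lambda_a\}$, we get $\mu_a\le\lambda_a+o(1)$ in total variation.

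\emph{Step 2: pass to the limit in the envelopes.} By Lemma~\ref{lem:UC0}, the $U_{\varepsilon,\tau}$ are uniformly bounded. By the comparison principle on $q^{-1}(B)$, they are monotone in $\varepsilon$ and $\tau$: the boundary data $u_{\varepsilon,\tau}$ increase as $\tau\downarrow0$ and decrease as $\varepsilon\downarrow0$ by Lemma~\ref{lem:C0}, and the classes $P_\varepsilon$ decrease. So the iterated limit $\hat u$ exists as a monotone limit, and its fiber restrictions $\hat u_a$ are bounded $P_a$-psh. For the decreasing limits we invoke Bedford--Taylor continuity, with restriction to the fiber commuting with the limit. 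For the increasing limits, after usc regularization, the exceptional set is pluripolar and hence meets almost every fiber in a pluripolar set. We may need to first establish the inequality for a.e.\ $a$ and then extend. This yields \eqref{equ:sup solution}: $(P_a+dd^c\hat u_a)^n\le e^{\hat u_a}f_a\eta_a^n$.

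\emph{Step 3: comparison and submean inequality.} $\hat u_a$ is a bounded subsolution of \eqref{eq:residualMA}, so the comparison principle (as in \cite[Lemma 2.5]{DDNL18}) gives $\hat u_a\le u_a$ for every $a\in B$. Conversely, $u_\varepsilon$ is continuous, and the boundary values of $U_{\varepsilon,\tau}$ converge to those of $u$. Applying the same argument on every smaller disk $B'\subset B$ shows that the envelope with boundary data $u|_{q^{-1}(\partial B')}$ lies below $u$ inside and is $P$-psh. Since $u$ is bounded and upper semicontinuous, this submean-type domination over all disks implies that $u$ restricted to holomorphic discs transverse to the fibers satisfies the $P$-submean inequality. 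Together with fiberwise $P_t$-plurisubharmonicity, this gives that $u$ is $P$-psh; equivalently, $u$ equals its own Perron envelope.

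The main obstacle is Step 2, interchanging limits: the Monge--Amp\`ere measures of envelopes must restrict to fibers well, and the increasing $\tau$-limit is only defined up to a pluripolar set. I would try first to swap the order so that all limits used are decreasing, and to use the uniform $C^0$ bounds with Theorem~\ref{thm:GL stability} to upgrade convergence to uniform convergence on fibers.
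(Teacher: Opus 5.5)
Your Steps 1 and 2 follow the paper. Lemmas~\ref{lem:green} and~\ref{lem:error} combined with the pointwise inequality $(r-s)_+\le r\log^+(r/s)$ give, in the limit, the fiberwise inequality $(P_a+\ddc\hat u_a)^n\le e^{\hat u_a}f_a\eta_a^n$. No Pinsker-type bound is needed.

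Your limit-interchange worry also goes away. Perron envelopes are $1$-Lipschitz in the sup norm of the boundary data, so $\|U_{\varepsilon,\tau}-U_\varepsilon\|_{L^\infty}\le\|u_{\varepsilon,\tau}-u_\varepsilon\|_{L^\infty(q^{-1}(\partial B))}\to0$ by Lemma~\ref{lem:C0}. Only the decreasing $\varepsilon$-limit remains.

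The genuine gap is in Step 3: you run the comparison in the wrong direction.
\begin{itemize}
\item The inequality $(P_a+\ddc\hat u_a)^n\le e^{\hat u_a}f_a\eta_a^n$ makes $\hat u_a$ a \emph{super}solution of \eqref{eq:residualMA}. The operator $\phi\mapsto\log\bigl((P_a+\ddc\phi)^n\bigr)-\phi$ is increasing in the Hessian and decreasing in $\phi$. Hence the comparison principle \cite[Lemma 2.5]{DDNL18} gives $u_a\le\hat u_a$, not $\hat u_a\le u_a$.
\item Your goal $\hat u=u$ is false in general. Take $Z=Z_0\times\Delta$ with $P$, $D$ and $\eta_t=\eta_0$ independent of $t$, and $f=e^{-|t|^2}f_0$ (this happens when $b=\mu^*\beta$ contains $\ddc|t|^2$). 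Then $u=u_0+|t|^2$. But $u_0+R^2$ is $P$-psh and lies below $u_\varepsilon$ on $q^{-1}(\partial B)$ for $B=\{|t|<R\}$, so it competes in every $U_\varepsilon$. Hence $\hat u\ge u_0+R^2>u$ inside.
\item The inequality you derive, $\hat u\le u$ with equality on $q^{-1}(\partial B)$, gives no submean property for $u$. Plurisubharmonicity of $\hat u$ only bounds $\hat u(z_0)$ by the boundary mean of $u$; it says nothing about $u(z_0)$.
\end{itemize}

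What closes the argument is the opposite inequality $u\le\hat u$. For $z_0\in Z_a$, let $\sigma$ be a holomorphic section of $q$ over a disk $B$ centred at $a$, and let $A$ be a local potential of $P$. Using $\hat u=u$ on $q^{-1}(\partial B)$ (there $U_\varepsilon=u_\varepsilon$ and both decrease),
\[
(u+A)(z_0)\le(\hat u+A)(z_0)\le\int_{\partial B}(\hat u+A)\circ\sigma\,d\nu=\int_{\partial B}(u+A)\circ\sigma\,d\nu .
\]
Combine this with upper semicontinuity of $u$, which is a decreasing limit of the continuous $u_\varepsilon$, and with fiberwise $P_t$-plurisubharmonicity. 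Together they give that $u$ is $P$-psh. This is exactly the paper's Step 3.
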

	
	\begin{proof}
		As we have mentioned, we are going to prove that $\hat{u}=\lim_{\varepsilon\to0}\lim_{\tau\to0}U_{\varepsilon,\tau}$ satisfies $(T_a+dd^c\hat{u}_a)^n\leq e^{\hat{u}_a}f_a\eta_a^n$ for eny $a\in B$.
		
		\medskip
		\noindent\textbf{Step 1: estimate $\int_{Z_a}\log^+\frac{(P_{\varepsilon,a}+dd^cU_{\varepsilon,\tau,a})^n}{e^{U_{\varepsilon,\tau,a}}f_{\tau,a}\eta^n_a}$.}
		We first fix a point $a\in B$ and $\varepsilon,\tau>0$, simply put $U=U_{\varepsilon,\tau}$ and
		$T=P_\varepsilon+\ddc U$.
		We denote the first error term in \eqref{eq:entropy} by
		\[
		b_{\varepsilon,\tau,a}
		:=\int_{\partial B}\left(\int_{Z_t}
		e^{u_{\varepsilon,\tau,t}}f_{\tau,t}
		\log\frac{f_{\tau,t}}{f_t}\,\eta_t^n\right)d\nu_a(t).
		\]
		Since $f_\tau\ge f$ and $f_\tau\to f$ uniformly,
		$f_\tau\log(f_\tau/f)\geq0$ tends to zero almost everywhere and is bounded above
		by $C(1+|\log f|)$. Relative SNC condition of $D$ makes $C(1+|\log f|)$ integrable over
		$q^{-1}(\partial B)$ with respect to the fiber measures $\eta_t^n$ and boundary measure
		$\nu_a$. The bound $|u_{\varepsilon,\tau}|\leq C_B$ in
		Lemma~\ref{lem:C0} consequently gives
		\[
		\sup_{0<\varepsilon\le1}b_{\varepsilon,\tau,a}\longrightarrow0,
		\qquad \tau\downarrow0.
		\]
		On $Z_a$ write
		$T_a^n:=r\eta_a^n$ and $e^{U_a}f_a\eta_a^n:=s\eta_a^n$.
		For any $\chi\in C(Z_a)$ with $0\le\chi\le1$,
		\[
		\begin{aligned}
			\int_{Z_a}\chi\bigl(T_a^n-e^{U_a}f_a\eta_a^n\bigr)
			&\le\int_{Z_a}(r-s)_+\eta_a^n\\
			&\le\int_{Z_a}r\log^+(r/s)\,\eta_a^n.
		\end{aligned}
		\]
		Lemmas~\ref{lem:green} and~\ref{lem:error} yield
		\begin{equation}\label{eq:weakdefect}
			\int_{Z_a}\chi\,T_a^n
			\le\int_{Z_a}\chi e^{U_a}f_a\eta_a^n
			+b_{\varepsilon,\tau,a}
			+C_{a,B}\sum_i d_i m_{i,\varepsilon}.
		\end{equation}
		
		\medskip
		\noindent\textbf{Step 2: pass to the two limits.}
		For $\varepsilon>0$, define 
		$U_\varepsilon$
		be the Perron envelope with boundary value $u_\varepsilon|_{q^{-1}(\partial B)}$ as in \S~\ref{sec:uniform estimate of MA}.
		It follows that
		\[
		\norm{U_{\varepsilon,\tau}-U_\varepsilon}_{L^\infty(q^{-1}(B))}
		\le
		\norm{u_{\varepsilon,\tau}-u_\varepsilon}_{L^\infty(q^{-1}(\partial B))}
		\longrightarrow0.
		\]
		In particular $U_\varepsilon$ is continuous on
		$q^{-1}(\overline B)$ and has the indicated boundary values.
		
		As $\varepsilon\downarrow0$, both the admissible classes $\text{PSH}(q^{-1}(B),P_{\epsilon})$ and the
		boundary values decrease, so $U_\varepsilon$ decreases pointwise to a function
		$\hat{u}$ on $q^{-1}(\overline B)$. In particular,
		$\hat{u}=u$ on $q^{-1}(\partial B)$, since $U_\varepsilon=u_\varepsilon$
		there for every $\varepsilon$. The uniform bound of $U_{\varepsilon,\tau}$ \eqref{eq:UC0} shows that
		$U$ is bounded and $P$-psh on $q^{-1}(B)$. 
		
		On the fixed fiber $Z_a$, pass first $\tau\downarrow0$ and then
		$\varepsilon\downarrow0$ in \eqref{eq:weakdefect}. Bedford--Taylor convergence theorem \cite{BT82} shows the convergence of left-hand side.
		While on right-hand side, $\sup_{0<\varepsilon\leq1}b_{\varepsilon,\tau,a}\to0$,
		$m_{i,\varepsilon}\to0$ as $\tau\to0$, $\varepsilon\to0$. Since $\chi$ was arbitrary, we obtain on
		every $Z_a$
		\begin{equation}\label{eq:supersolution}
			(P_a+\ddc U_a)^n\le e^{U_a}f_a\eta_a^n.
		\end{equation}
		The comparison principle gives
		\[
		u_a\le U_a.
		\]
		Since $a$ is arbitrary we get $u\leq U$ on $q^{-1}(B)$.
		
		\medskip
		\noindent\textbf{Step 3: verify the submean inequality.}
		Let $a\in \Delta^\circ$, for every $z_0\in q^{-1}(a)$ let $\Omega_z\times B_t$ be a product coordinate chart near $z_0$. 
		Let $\sigma$ be a holomorphic map from a neighborhood of $\overline{B}\subset\Delta^\circ$ into this chart with $\sigma(0)=z_0$,
		$q\circ\sigma=\operatorname{id}$.
		Choose local
		potentials $A,R$ of $P,\eta$ on this chart respectively, and set $A_\varepsilon=A+\varepsilon R$.
		We denote the Perron envelopes with boundary value $u_{\varepsilon}|_{q^{-1}(\partial B)}$ by
		$U_\varepsilon$ and $\hat{u}:=\lim_{\varepsilon\to0}U_{\varepsilon}$.
		We will use the crucial properties: $U_{\varepsilon}=u_{\varepsilon}$ on $q^{-1}(\partial B)$ and $U_{\varepsilon}\geq u_{\varepsilon}$ on $q^{-1}(B)$.
		Submean inequality for $U_\varepsilon+A_\varepsilon$ gives
		\[
		(U_\varepsilon+A_\varepsilon)(z_0)
		\le\int_{\partial B}
		(u_\varepsilon+A_\varepsilon)(\sigma(t))\,d\nu(t).
		\]
		Here $d\nu$ is the measure $\frac{d\theta}{2\pi}$ on $\partial B=S^1$.
		Using $u_\varepsilon\le U_\varepsilon$ we know $u\leq \hat{u}$ and conclude that
		\begin{align*}
			(u+A)(z_0)
			\le (\hat{u}+A)(z_0)
			&\leq \int_{\partial B}(\hat{u}+A)(\sigma(t))d\nu(t)\\
			&= \lim_{\varepsilon\to0}\int_{\partial B}(U_{\varepsilon}+A_{\varepsilon})(\sigma(t))d\nu(t)\\
			&=\lim_{\varepsilon\to0}\int_{\partial B}(u_{\varepsilon}+A_{\varepsilon})(\sigma(t))d\nu(t)
			=\int_{\partial B}(u+A_0)(\sigma(t))\,d\nu(t).
		\end{align*}
		
		The function $u$ is upper semicontinuous, since it is the decreasing limit of
		the continuous functions $u_\varepsilon$ from Lemma~\ref{lem:C0}.
		Each non-vertical complex line can be
		parametrized by a map $\sigma$ as above, so submean inequality for $u$ holds on such line. On a
		vertical complex line, submean inequality directly follows from
		$u_t\in\PSH(Z_t,P_t)$. This proves $u+A$ is psh and the proof is complete.
	\end{proof}

	To prove plurisubharmonic variation Theorem~\ref{thm:variation}, we need one more lemma.
	\begin{lemma}\label{lem:missing}
		Let $p:X\to\Delta$ be a proper holomorphic submersion, $a$ a smooth
		closed real $(1,1)$-form with big fiber classes, and $dV_t$ a smooth
		positive family of fiber volume forms. Suppose $\Phi\in\PSH(X,a)$ and
		for $t\ne0$ its restrictions solve Monge--Amp\`ere equation
		\[
		\np{(a_t+\ddc\Phi_t)^n}=e^{\Phi_t}dV_t
		\]
		with full mass. If
		$\vol([a_t])\to\vol([a_0])=:V_0>0$, then $\Phi_0$ is the full-mass
		solution on $X_0$.
	\end{lemma}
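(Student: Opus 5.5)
Writing $\varphi_0$ for the full-mass solution on $X_0$, the plan is to prove the two inequalities
\[
\np{(a_0+\ddc\Phi_0)^n}\ \ge\ e^{\Phi_0}dV_0
\qquad\text{and}\qquad
\int_{X_0}\np{(a_0+\ddc\Phi_0)^n}\ \le\ V_0 .
\]
Together they force $\Phi_0$ to have full mass and to solve the equation. Uniqueness of full-mass solutions of $\np{(a_0+\ddc\psi)^n}=e^{\psi}dV_0$ then gives $\Phi_0=\varphi_0$.

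We first note that $\Phi_0\not\equiv-\infty$. Since $\Phi$ is $a$-psh on $X$, its restriction to $X_0$ is either $a_0$-psh or identically $-\infty$. The normalization $\int_{X_t}e^{\Phi_t}dV_t=\vol([a_t])\to V_0>0$ will be used to exclude the degenerate case. Uniform Skoda bounds as in Theorem~\ref{thm:DNGG uniform Skoda}, applied with a fixed K\"ahler form dominating $a$, bound $\sup_{X_t}\Phi_t$ above, so $\sup\Phi_t$ stays bounded as $t\to0$. Upper semicontinuity of $\Phi$ together with Hartogs-type compactness for the family of normalized $\Phi_t$ (Cao--P\u aun style) should give $\sup_{X_0}\Phi_0\ge\limsup_{t\to0}\sup_{X_t}\Phi_t>-\infty$. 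We transport $\Phi_t$ to $X_0$ via a smooth trivialization $X\simeq X_0\times\Delta$. Any $L^1$ limit $\psi$ of $\Phi_{t_j}$ then satisfies $\psi\le\Phi_0$, by upper semicontinuity of $\Phi$ on the total space; this is the standard fact that restriction of a psh function to a fiber dominates limits of nearby fibers.

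The key step, and the expected main obstacle, is the first inequality above. The plan is to use the envelope characterization of the solution: $\varphi_0$ is the maximal $a_0$-psh function $\psi$ with $\np{(a_0+\ddc\psi)^n}\ge e^{\psi}dV_0$ (equivalently, a domination principle). One should show that limits $\psi$ of $\Phi_{t_j}$ are subsolutions, using lower semicontinuity of non-pluripolar Monge--Amp\`ere measures under convergence in capacity. Convergence in capacity is where the hypothesis $\vol([a_t])\to V_0$ enters. By Guedj--Lu / Darvas--Di Nezza--Lu, if $\psi_j\to\psi$ in $L^1$ and the masses $\int\np{(a_{t_j}+\ddc\Phi_{t_j})^n}=\vol([a_{t_j}])$ converge to $V_0$, an upper bound on the mass of the limit, then convergence holds in capacity. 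The measures then converge weakly, and the inequality passes to the limit since $e^{\Phi_{t_j}}dV_{t_j}\to e^{\psi}dV_0$ by dominated convergence. An alternative route is to use the uniform estimate of Theorem~\ref{thm:uniform}, $V_{a_t}-M\le\Phi_t\le V_{a_t}+C$, with $M$ uniform in $t$ thanks to Theorem~\ref{thm:DNGG uniform Skoda}. Passing to the limit gives $\psi\ge\limsup V_{a_t}-M$, and semicontinuity of envelopes in families plus volume convergence should yield $\psi\ge V_{a_0}-M'$, so $\psi$ has minimal singularities and full mass. I would try this second route first, since it avoids capacity arguments.

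Next we compare $\psi$ with $\Phi_0$. Once $\psi$ has minimal singularities and solves $\np{(a_0+\ddc\psi)^n}=e^{\psi}dV_0$, it equals $\varphi_0$. Then $\varphi_0=\psi\le\Phi_0$, so $\Phi_0$ also has minimal singularities and full mass $V_0$. On the other hand $\Phi_0$ is $a_0$-psh, and $\varphi_0$ dominates every candidate $w\in\PSH(X_0,a_0)$ with $\np{(a_0+\ddc w)^n}\ge e^{w}dV_0$. We therefore still need an upper bound $\Phi_0\le\varphi_0$. For it, I would use the submean property of $\Phi$ along sections transverse to $X_0$: $\Phi_0(x)\le\int\Phi(\sigma(t))d\nu(t)$ over small circles. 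As the radius shrinks, the right-hand side tends to the corresponding limit of $\Phi_t$, i.e. to $\psi$ in the $L^1$/capacity sense along the trivialization. This gives $\Phi_0\le\psi=\varphi_0$ almost everywhere, hence everywhere, and therefore $\Phi_0=\varphi_0$.
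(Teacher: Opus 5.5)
\textbf{There is a genuine gap: your plan never proves that $\Phi_0$ itself is a subsolution, and your opening reduction is also missing a needed inequality.}

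\textbf{The opening reduction.} The two displayed inequalities cannot force $\Phi_0$ to solve the equation. Indeed, $\varphi_0-c$ satisfies both for every constant $c>0$: we have $\np{(a_0+\ddc(\varphi_0-c))^n}=e^{\varphi_0}dV_0\ge e^{\varphi_0-c}dV_0$, and the bound $\int_{X_0}\np{(a_0+\ddc\Phi_0)^n}\le V_0$ holds for every $a_0$-psh function.

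What closes the argument is the lower bound $\int_{X_0}e^{\Phi_0}dV_0\ge V_0$, and it is elementary. $\Phi$ is bounded above near $X_0$ and upper semicontinuous on the total space. So, in a smooth trivialization, $\limsup_{t\to0}\Phi_t\le\Phi_0$ pointwise, and reverse Fatou gives
\[
V_0=\lim_{t\to0}\int_{X_t}e^{\Phi_t}dV_t\le\int_{X_0}e^{\Phi_0}dV_0 .
\]
This also gives $\Phi_0\not\equiv-\infty$ without Skoda bounds or Hartogs compactness. Once $\Phi_0$ is known to be a subsolution, you get $V_0\le\int_{X_0}e^{\Phi_0}dV_0\le\int_{X_0}\np{(a_0+\ddc\Phi_0)^n}\le V_0$. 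All inequalities are then equalities, and $\Phi_0$ is the full-mass solution. No limit $\psi$, no comparison with $\varphi_0$ and no submean argument are needed.

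\textbf{The subsolution inequality.} The real gap is $\np{(a_0+\ddc\Phi_0)^n}\ge e^{\Phi_0}dV_0$ itself. Your plan targets a subsequential $L^1$ limit $\psi$ of the transported $\Phi_{t_j}$, and $\psi\le\Phi_0$ does not pass the subsolution property up to $\Phi_0$. The tools you propose for $\psi$ are not available either:
\begin{enumerate}
\item In Darvas--Di Nezza--Lu, convergence in capacity is a hypothesis, not a consequence of mass convergence. Their results also live on one fixed complex manifold, whereas the $\Phi_{t_j}$ are quasi-psh for different complex structures.
\item The envelope route needs the limits of $V_{a_{t_j}}$ to dominate $V_{a_0}-C$. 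That is lower semicontinuity of minimal singularities across the special fiber, which volume convergence does not give.
\item The submean step yields nothing. The circle means of $t\mapsto\Phi(\sigma(t))$ decrease to $\Phi_0(\sigma(0))$ itself. Comparing them with $\psi$ would require $\Phi_t\to\psi$ along all $t\to0$, not only along $t_j$.
\end{enumerate}

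\textbf{The missing idea.} Subsolutions are stable under suprema in $t$ and under decreasing limits, so no limit of fiber solutions is needed.
\begin{enumerate}
\item In a product chart write $a=\ddc A$ and $v=\Phi+A$, so $\np{(\ddc_zv_t)^n}=e^{v_t}F(z,t)\,d\lambda_z$ for $t\ne0$. Pick $C$ with $(\ddc|z|^2)^n\ge e^{|z|^2-C}F\,d\lambda_z$.
\item By plurifine locality, each $\max\{v_t,|z|^2-C\}$ with $t\ne0$ is a bounded subsolution. Finite maxima of subsolutions are subsolutions.
\item Hence, by Choquet's lemma and Bedford--Taylor for increasing sequences, $V_{r,C}:=\bigl(\sup_{0<|t|\le r}\max\{v_t,|z|^2-C\}\bigr)^*$ satisfies $(\ddc V_{r,C})^n\ge e^{V_{r,C}}\min_{|t|\le r}F\,d\lambda_z$.
\item Since $v$ is upper semicontinuous and subharmonic in $t$, $V_{r,C}\searrow\max\{v_0,|z|^2-C\}$ as $r\to0$. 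Bedford--Taylor for decreasing sequences then gives the inequality at $t=0$.
\item Letting $C\to\infty$ gives the non-pluripolar subsolution inequality for $\Phi_0$.
\end{enumerate}
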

	
	\begin{proof}
		Choose a disk $\Delta'\Subset\Delta$ containing $0$. The function $\Phi$
		is bounded above on $p^{-1}(\overline{\Delta'})$. Fatou's lemma then implies
		\begin{equation}\label{eq:fatou}
			V_0=\lim_{t\to0}\vol([\alpha_t])=\lim_{t\to0}\int_{X_t}e^{\Phi_t}dV_t\leq \int_{X_0}(\limsup_{t\to0}e^{\Phi_t})dV_0 \le\int_{X_0}e^{\Phi_0}dV_0.
		\end{equation}
		Thus $\Phi_0\not\equiv-\infty$, and it is $a_0$-psh.
		In holomorphic product coordinate $\Omega_z\times V_t$ we write $a=\ddc A$ and
		$v(z,t)=\Phi(z,t)+A(z,t)$. The fiber equation is
		\[
		\np{(\ddc_zv_t)^n}=e^{v_t}F(z,t)d\lambda_z,
		\qquad F>0\text{ smooth}.
		\]
		We claim that 
		\[\np{(dd^c_zv_0)^n}\geq e^{v_0}F(z,0)d\lambda_z.\]
		To prove the claim, we take $C>0$ so that $(dd^c|z|^2)^n\geq e^{|z|^2-C}F_td\lambda_z$, $\forall t$, and first prove that
		\[(dd^c_z\max\{v_t,|z|^2-C\})^n\geq e^{\max\{v_t,|z|^2-C\}}F_td\lambda_z, \qquad \forall t\neq0.\]
		Indeed, for every $t\neq0$ there is $\varepsilon_j\to0$ so that $\{v_t=|z|^2-C-\varepsilon_j\}$ has zero Lebesgue measure, therefore local property of non-pluripolar products
		in the plurifine topology implies the desired inequality holds for $C$ replaced by $C-\varepsilon_j$. Then letting $\varepsilon_j\to0$ we are done. 
		
		Let $V_C(z,t):=\max\{v(z,t),|z|^2-C\}$ and $V_{r,C}(z):=\sup_{|t|\leq r}V_C(z,t)$ for $r>0$.
		It is clear that $V_{r,C}$ is a bounded psh function.
		Choquet lemma gives a sequence $v_j:=\max\{v_{t_j},|z|^2-C\}$, $|t_j|\leq r$ so that $V_{r,C}=(\sup v_j)^*$.
		Let $F_r(z):=\min_{|t|\leq r}F(t,z)$, then $(dd^cv_j)^n\geq e^{v_j}F_rd\lambda_z$ for every $j$. 
		Local property of Monge--Amp\`ere operator yields $(dd^c_z\max\{v_1,\cdots,v_j\})^n\geq e^{\max\{v_1,\cdots,v_j\}}F_rd\lambda_z$ for every $j$ as above. 
		Taking the limit, Bedford--Taylor convergence theorem implies
		\[(dd^cV_{r,C})^n\geq e^{V_{r,C}}F_rd\lambda_z.\] 
		Finally $V_{r,C}(z)\searrow V_C(z,0)=\max\{v(z,0),|z|^2-C\}$ as $r\to0$, because for each $z$, function $t\to V_C(z,t)$ is subharmonic hence strongly upper semicontinuous.
		Bedford--Taylor convergence theorem implies
		\[(dd^c_z\max\{v(z,0),|z|^2-C\})^n\geq e^{\max\{v(z,0),|z|^2-C\}}F(z,0)d\lambda_z.\]
		Let $C\to\infty$, the claim is proved.
		As a result,
		\[
		\np{(a_0+\ddc\Phi_0)^n}\ge e^{\Phi_0}dV_0.
		\]
		Together with \eqref{eq:fatou} we obtain that
		\[
		V_0\le\int_{X_0}e^{\Phi_0}dV_0
		\le\int_{X_0}\np{(a_0+\ddc\Phi_0)^n}
		\le\vol([a_0])=V_0.
		\]
		All inequalities are equalities. This completes the proof.
	\end{proof}
	
	\begin{proof}[Proof of Theorem~\ref{thm:variation}]
		Apply Theorem~\ref{prop:model} and work
		first over $\Delta^\circ=\Delta\setminus T$. 
		Recall that $\eta$ is a K\"ahler form on $Z$, $\beta$ is a K\"ahler form on $X$. For some smooth function $h$ on $Z$ it holds that
		\begin{equation}
			\theta_\eta+\mu^*\beta=P+\Theta_D+dd^ch, \qquad f=e^{-h}\Pi_i|s_i|^{2d_i}.
		\end{equation}
		Let $u$ be the $P$-plurisubharmonic function so that
		\begin{align*}
			(P_t+dd^cu_t)^n=e^{u_t}f_t\eta_t^n,\qquad t\in \Delta^{\circ}.
		\end{align*}
		Define
		\[
		\psi_Z:=u+\log|s_D|^2-h.
		\]
		Then, as currents on $Z$,
		\begin{equation}\label{eq:targetZ}
			\theta_\eta+\mu^*\beta+\ddc\psi_Z=P+\ddc u+[D].
		\end{equation}
		Let
		\[
		J:=\log\frac{\mu^*\omega_t^n}{\eta_t^n}.
		\]
		The Poincar\'e--Lelong identity is
		\[
		\ddc J=\mu^*\theta_\omega-\theta_\eta+[E_\mu],
		\]
		where $E_\mu=K_{Z/\Delta}-\mu^*K_{X/\Delta}$.
		The divisorial terms of $\psi_Z$ and $J$ are given by $D=E_{\alpha}+E_{\mu}$ and $E_\mu$ respectively,
		hence locally
		$\psi_Z-J=u+\log|s_{E_\alpha}|^2+\text{a smooth function}$.
		Combining this with \eqref{eq:targetZ} gives
		\begin{equation}\label{eq:positivepullback}
			\mu^*\alpha+\ddc(\psi_Z-J)
			=P+\ddc u+[E_\alpha]\ge0.
		\end{equation}
		We take the non-pluripolar product of both sides on each fiber $Z_t$ and get that
		\begin{equation}\label{eq:Z MA}
			\left\langle\big((\mu^*\alpha)_t+dd^c(\psi_{Z,t}-J_t)\big)^n\right\rangle
			=\langle(P_t+dd^cu_t+[E_{\alpha,t}])^n\rangle=(P_t+dd^cu_t)^n=e^{\psi_{Z,t}}\eta_t^n.
		\end{equation}
		$E_{\alpha,t}$ is pluripolar so we can omit it in the non-pluripolar product.
		On the isomorphism locus of $\mu:Z\to X$, $\mu^*\alpha$-psh function $\psi_Z-J$ induces an $\alpha$-psh function $\widetilde\varphi$. 
		It extends across
		the center of modification, because $\psi_Z-J$ is quasi-psh and hence is bounded from above on
		any inverse image of compact set, the standard removable-singularity theorem for psh
		functions applies to $\widetilde{\varphi}$.

		We claim that for $t\in\Delta^{\circ}$,
		\begin{equation}\label{eq:X MA}
			\np{(\alpha_t+\ddc\widetilde\varphi_t)^n}
			=e^{\widetilde\varphi_t}\omega_t^n\qquad \text{on $X_t$}.
		\end{equation}
		In fact by Proposition~\ref{prop:model}, $\mu_t:Z_t\to X_t$ is a modification, thus the pull-back of the left-hand side coincides with that of \eqref{eq:Z MA}.
		On the other hand, $\mu_t^*(e^{\widetilde\varphi_t}\omega_t^n)=e^{\psi_{Z,t}-J_t}\mu^*\omega_t^n=e^{\psi_{Z,t}}\eta_t^n$ follows from the definition of $J$. This proves our claim.
		The total mass of left-hand side of \eqref{eq:X MA} equals $\int_{Z_t}P_t^n$.
		By Proposition~\ref{prop:model}, this is $\vol([\alpha_t])$. Hence by 
		\cite[Theorem 6.1]{BEGZ} $\widetilde\varphi_t$ is the unique solution with minimal singularities on every good fiber.
		
		Eventually we deal with discrete fibers over $T$.
		Fix $t_0\in T$, the good-fiber solutions have a uniform upper bound near $t_0$. Indeed $\alpha_t\le C\omega_t$ for some uniform $C$, and for $t$ near $t_0$, the normalized functions $\varphi_t-\sup_{X_t}\varphi_t$ satisfies
		\[\int_{X_t}|\varphi_t-\sup_{X_t}\varphi_t|\omega_t^n\leq C'\]
		for some uniform $C'>0$ by
		\cite[Proposition 3.3]{DNGG}. Jensen's inequality and
		\[
		\int_{X_t}e^{\varphi_t}\omega_t^n=\vol([\alpha_t])=V>0
		\]
		give the required uniform upper bound of $\sup_{X_t}\varphi_t$ as in Lemma~\ref{lem:C0}. Thus $\widetilde\varphi$ has a unique $\alpha$-psh
		extension $\Phi$ across the fiber $X_t$. 
		Volume constancy of $\vol([\alpha_t])$ and
		Lemma~\ref{lem:missing} show that $\Phi_t$ solves the corresponding equation on $X_t$. This finishes the proof.
	\end{proof}

	\section{Construct relative canonical metric with minimal singularities}
	\label{sec:canonicalmetric}
	
	\begin{theorem}\label{thm:metric}
		Let $p:X\rightarrow\Delta$ be as before, $\omega$ be
		a K\"ahler form on $X$ and let $\theta=\theta_{\omega}$ be the curvature form of induced metric on $K_{X/\Delta}$. After shrinking $\Delta$, there exists
		$\Psi\in\PSH(X,\theta)$ such that for every $t\in\Delta$,
		the restriction $\Psi_t$ is not identically $-\infty$ and has minimal
		singularities in $\PSH(X_t,\theta_{t})$. 
	\end{theorem}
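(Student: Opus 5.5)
We follow the nef-case template of Section~\ref{thosewerethedays}, with Theorem~\ref{thm:variation} replacing the smooth curvature identity. We may assume $K_{X_0}$ is pseudoeffective; otherwise $\PSH(X_t,\theta_t)$ is empty, and the statement is only used when $H^0(X_0,mK_{X_0})\neq0$. Theorem~\ref{cor:mmp-psef-family} then makes every $K_{X_t}$ pseudoeffective. For each $\eta\in(0,1]$ the fiber classes of $\alpha_\eta:=\theta+\eta\omega$ are therefore big.

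\textbf{Step 1: potentials for fixed $\eta$.} Apply Theorem~\ref{thm:variation} with $\beta=\eta\omega$. This produces $\varphi_\eta\in\PSH(X,\alpha_\eta)$ whose restrictions are the full-mass solutions of $\np{(\alpha_{\eta,t}+\ddc\varphi_{\eta,t})^n}=e^{\varphi_{\eta,t}}\eta^n\omega_t^n$. The theorem also gives that $v_\eta:=\vol([\alpha_{\eta,t}])$ is independent of $t$. We normalize by the single constant
\[
u_\eta:=\varphi_\eta-\log(v_\eta/W),\qquad W=\int_{X_t}\eta^n\omega_t^n,
\]
which still lies in $\PSH(X,\alpha_\eta)$. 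Then $\int_{X_t}e^{u_{\eta,t}}\,d\nu_t=1$ for the probability measures $d\nu_t=W^{-1}\eta^n\omega_t^n$ (these do not depend on $\eta$).

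\textbf{Step 2: uniform bounds.} After shrinking $\Delta$, the classes satisfy $\alpha_{\eta}\le C\omega$. Theorem~\ref{thm:DNGG uniform Skoda} then gives $\int_{X_t}(u_{\eta,t}-\sup u_{\eta,t})\,d\nu_t\ge -C$ uniformly in $t$ and $\eta$. Jensen's inequality yields $0\le\sup_{X_t}u_{\eta,t}\le C$. For the lower bound, write the equation as $\np{(\alpha_{\eta,t}+\ddc w)^n}/v_\eta=e^{w}\,d\nu_t/\int e^{w}d\nu_t$ with $w=u_{\eta,t}-\sup u_{\eta,t}$. The density is bounded above by $e^{0}\cdot W^{-1}\eta^n\omega_t^n/\int e^w d\nu_t$, and $\int e^w d\nu_t\ge e^{-C}$ by Jensen. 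Theorem~\ref{thm:uniform} (whose constant is independent of the volume) then gives
\[
u_{\eta,t}\ge V_{\alpha_{\eta,t}}-M\ge V_{\theta_t}-M,
\]
with $M$ independent of $t$ and $\eta$. The second inequality holds because $\PSH(X_t,\theta_t)\subset\PSH(X_t,\alpha_{\eta,t})$. This step needs checking: the uniform Skoda constant must hold for all $\alpha_{\eta,t}$-psh functions, which follows since these are $C\omega_t$-psh.

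\textbf{Step 3: the limit.} Take $\eta_j\downarrow0$ and set $U_N:=(\sup_{j\ge N}u_{\eta_j})^*$, regularized on the total space. Each $U_N$ is $(\theta+\eta_N\omega)$-psh, and the functions are locally uniformly bounded above by Step 2. The decreasing limit $\Psi:=\lim U_N$ is therefore $\theta$-psh or $\equiv-\infty$. For the fiberwise lower bound, note that $U_N\ge u_{\eta_N}$ everywhere, so on each fiber $U_N|_{X_t}\ge V_{\theta_t}-M$. Passing to the decreasing limit gives $\Psi|_{X_t}\ge V_{\theta_t}-M$ pointwise. Conversely, $\Psi_t$ is $\theta_t$-psh, or identically $-\infty$, which the lower bound excludes. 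Since $V_{\theta_t}$ has minimal singularities, $\Psi_t$ has minimal singularities for every $t$.

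\textbf{Main obstacle.} The main obstacle is the uniformity in Step 2 as $v_\eta\to0$, possibly with $\vol(K_{X_t})=0$. This is exactly why we normalize by $\log(v_\eta/W)$ rather than by the fiberwise supremum: a fiberwise normalization would destroy total-space positivity, while a single constant preserves it. We would rely on the volume-independence of Theorem~\ref{thm:uniform} (the Guedj--Lu/DNGG estimate), checking that its integrability hypotheses hold uniformly for the family of classes $\alpha_{\eta,t}\le C\omega_t$.
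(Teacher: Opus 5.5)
Your proposal is correct and follows the paper's proof essentially step for step. It uses the same single-constant normalization $\varphi_\eta-\log(v_\eta/W)$ of the potentials from Theorem~\ref{thm:variation}, the same Jensen argument giving $0\le\sup_{X_t}u_{\eta,t}\le C$, the same volume-independent comparison with $V_{\alpha_{\eta,t}}$ via Theorem~\ref{thm:uniform}, and the same tail envelope $U_N=(\sup_{j\ge N}u_{\eta_j})^*$. The differences are cosmetic: your choice $\beta=\eta\omega$ only shifts the potentials by a constant, and you get a constant $M$ uniform in $t$ where the paper settles for a fiberwise $C_t$. You should state explicitly, as the paper does via \cite[Theorem 6.1]{BEGZ}, that $u_{\eta,t}-\sup_{X_t}u_{\eta,t}$ has minimal singularities, so that Theorem~\ref{thm:uniform} applies to it.
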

	
	\begin{proof}
		For $0<\eta\le1$, put
		\[
		\alpha_\eta=\theta+\eta\omega.
		\]
		Theorem~\ref{thm:variation} applies to give a
		potential $\varphi_\eta\in\PSH(X,\alpha_\eta)$ satisfying for each $t\in\Delta$,
		\[
		\np{(\alpha_{\eta,t}+\ddc\varphi_{\eta,t})^n}
		=e^{\varphi_{\eta,t}}\omega_t^n
		\]
		with $\varphi_{\eta,t}$ has full Monge-Amp\`ere mass. For each fixed $\eta$, Theorem~\ref{prop:model}
		shows that
		\[
		v_\eta:=\vol([\alpha_{\eta,t}])>0
		\]
		is constant on $\Delta$. Closedness
		of $\omega$ proves
		$W:=\int_{X_t}\omega_t^n$ is constant on $\Delta$.
		Define
		\begin{equation}\label{eq:massnormalizedKE}
			u_\eta=\varphi_\eta-\log(v_\eta/W),\qquad
			d\nu_t=W^{-1}\omega_t^n.
		\end{equation}
		Then
		$u_\eta\in\PSH(X,\alpha_\eta)$ and
		\begin{equation}\label{eq:probabilityKE}
			\frac{\np{(\alpha_{\eta,t}+\ddc u_{\eta,t})^n}}{v_\eta}
			=e^{u_{\eta,t}}d\nu_t,\qquad
			\int_{X_t}e^{u_{\eta,t}}d\nu_t=1.
		\end{equation}
		
		Fix a relatively compact disk $\Delta'\Subset\Delta$.
		There is $A>0$ such that $\alpha_\eta\le A\omega$ on
		$p^{-1}(\overline{\Delta'})$ for all $0<\eta\le1$.
		The uniform estimate
		\cite[Proposition 3.3]{DNGG}, applied to the smooth family and to
		$A\omega$, gives
		\[
		M_{\eta,t}-C\le\int_{X_t}u_{\eta,t}d\nu_t\le M_{\eta,t},
		\qquad M_{\eta,t}=\sup_{X_t}u_{\eta,t}.
		\]
		The normalization $\int_{X_t}e^{u_{\eta,t}}d\nu_t=1$ in \eqref{eq:probabilityKE} and $\int_{X_t}d\nu_t=1$ imply
		$M_{\eta,t}\ge0$. Jensen's inequality gives
		\[
		0=\log\int_{X_t}e^{u_{\eta,t}}d\nu_t
		\ge\int_{X_t}u_{\eta,t}d\nu_t
		\ge M_{\eta,t}-C.
		\]
		Thus
		\begin{equation}\label{eq:uniformuppersup}
			0\le M_{\eta,t}\le C
			\quad(t\in\Delta',\ 0<\eta\le1).
		\end{equation}
		In particular, the functions $u_\eta$ are locally uniformly bounded
		above on $X$.
		
		We fix one fiber $X_t$. The normalized potential
		$\widehat u_{\eta,t}=u_{\eta,t}-M_{\eta,t}$ has minimal
		singularities by \cite[Theorem 6.1]{BEGZ}. Subtracting its supremum
		does not change its Monge--Amp\`ere measure. Its normalized equation
		has density $e^{u_{\eta,t}}\le e^C$ with respect to $d\nu_t$.
		Theorem~\ref{thm:uniform} consequently gives a constant $C_t$,
		independent of $\eta$, such that
		\begin{equation}\label{eq:fiberlower}
			u_{\eta,t}\ge\widehat u_{\eta,t}
			\ge V_{\theta_t+\eta\omega_t}-C_t
			\ge V_{\theta_t}-C_t.
		\end{equation}
		The last inequality holds because $[\theta_t+\eta\omega_t]\geq[\theta_t]\geq0$.
		
		Choose any sequence $\eta_j\downarrow0$ and set
		\begin{equation}\label{eq:tailenvelope}
			U_N=\left(\sup_{j\ge N}u_{\eta_j}\right)^*,\qquad
			\Psi=\lim_{N\to\infty}U_N.
		\end{equation}
		The upper-semicontinuous regularization here is taken on the total
		space $X$. Since $M_{\eta,t}=\sup u_{\eta,t}$ is uniformly (with respect to $\eta,t$) bounded from above, $U_N\neq+\infty$ is $(\theta+\eta\omega)$-psh. The sequence $U_N$
		decreases, and \eqref{eq:fiberlower} implies on each fiber,
		\begin{equation}\label{eq:tailfiberbound}
			U_N|_{X_t}\ge V_{\theta_t}-C_t.
		\end{equation}
		The uniform lower bound $M_{\eta,t}\geq0$ also shows the decreasing limit $\Psi$ is not identically $-\infty$.
		Thus $\Psi\in \operatorname{PSH}(X,\theta)$ and \eqref{eq:tailfiberbound} implies $\Psi_t\ge V_{\theta_t}-C_t$. 
		The proof is therefore complete.
	\end{proof}
	\begin{remark}
		It is clear that $\Phi$ is independent of $s\in H^0(X_0,mK_{X_0})$.
	\end{remark}

	\section{Extension of pluricanonical sections}
	\label{sec:pluricanonicalextension}
	\begin{theorem}[=Theorem~\ref{thm:plurigenera}]\label{thm: inv-Kah}
		Let $p:X\rightarrow \Delta$ be a proper holomorphic submersion from a K\"ahler manifold $(X,\omega)$ to the unit disk. Then any $s\in H^0(X_0,mK_{X_0})$ can be extended to a section $\hat s\in H^0(X, mK_X)$. In particular, the complex dimension of $H^0(X_t,mK_{X_t})$ is invariant with $t\in \Delta$.
	\end{theorem}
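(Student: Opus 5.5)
The plan is to combine the universal canonical metric of Theorem~\ref{thm:metric} with the $L^2$ extension Theorem~\ref{thm: ext}, following the reduction described at the end of Section~\ref{sec:general L2}.

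\textbf{Reductions.} Since the statement is local on the base, it suffices to extend from $X_{t_0}$ after shrinking $\Delta$ around $t_0$. Translating the coordinate, we may take $t_0=0$ and assume $\Delta$ is a small disk. If $s=0$ there is nothing to prove. Otherwise $K_{X_0}$ is pseudoeffective, so Theorem~\ref{cor:mmp-psef-family} shows $K_{X_t}$ is pseudoeffective for all $t$. Hence $[\theta_t+\eta\omega_t]$ is big for every $\eta>0$, and Theorem~\ref{thm:metric} applies. The case $m=1$ is classical (Hodge numbers of a K\"ahler family are constant), but the argument below also covers it.

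\textbf{The metric.} Theorem~\ref{thm:metric} gives $\Psi\in\PSH(X,\theta)$ whose restriction $\Psi_0$ has minimal singularities in $\PSH(X_0,\theta_0)$. Put $L=(m-1)K_{X/\Delta}$ with the metric $h=h_\omega^{\otimes(m-1)}e^{-(m-1)\Psi}$. This metric has semipositive curvature, because $(m-1)(\theta+\ddc\Psi)\ge0$. Using $dt$, identify $K_{X}|_{X_0}\otimes L|_{X_0}$ with $mK_{X_0}$, and view $s$ as the section $f=s\wedge dt$ of $(K_X\otimes L)|_{X_0}$. We apply Theorem~\ref{thm: ext} with $r=1$, defining function $t$ (so $|dt|$ is bounded below), and $X$ replaced by $p^{-1}(\Delta')$, which is weakly pseudoconvex K\"ahler with $|t|$ bounded.

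\textbf{The $L^2$ condition.} It remains to check
\[
\int_{X_0}|s|^2_{h_\omega^{m-1}}e^{-(m-1)\Psi_0}\,dV_{\omega}<\infty .
\]
Locally, write $\varphi_s=\frac1m\log|s|^2$, a weight of a singular metric on $K_{X_0}$ with semipositive curvature. Because $\Psi_0$ has minimal singularities, $\Psi_0\ge\frac1m\log|s|^2_{h_\omega^m}-C$ globally on $X_0$. Therefore
\[
|s|^2_{h_\omega^{m-1}}e^{-(m-1)\Psi_0}\le C|s|^{2/m}
\]
as densities, and the right-hand side is a continuous volume form on the compact fiber $X_0$, hence integrable. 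Theorem~\ref{thm: ext} then produces $\hat s\in H^0(p^{-1}(\Delta'),mK_X)$ with $\hat s|_{X_0}=s\wedge dt$.

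\textbf{Globalizing and constancy.} This gives extension on a neighborhood of each fiber. To obtain a global statement on $\Delta$, run the same argument on all of $\Delta$ wherever Theorem~\ref{thm:metric} allows; since the shrinking there is only needed to have the metric near the given fiber, this suffices for the dimension statement.

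Constancy of $P_m$ then follows by the standard argument. By the surjectivity of restriction $H^0(p^{-1}(\Delta'),mK_X)\to H^0(X_0,mK_{X_0})$ for every base point, together with Grauert's upper semicontinuity of $t\mapsto h^0(X_t,mK_{X_t})$, the function $t\mapsto P_m(X_t)$ is locally constant. This holds because $p_*(mK_{X/\Delta})$ then commutes with base change by the cohomology and base change theorem. Since $\Delta$ is connected, $P_m(X_t)$ is constant.

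\textbf{Main obstacle.} The substantive difficulty lies entirely in the minimal singularities of $\Psi_0$ on the \emph{given} fiber, which is already Theorem~\ref{thm:metric}. The only delicate point remaining here is to make sure $\Psi$ is defined on a neighborhood of $X_{t_0}$ for arbitrary $t_0$; this is handled by recentering before invoking Theorem~\ref{thm:metric}.
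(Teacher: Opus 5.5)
Your proposal is correct and is essentially the paper's proof. Both use pseudoeffectivity of every $K_{X_t}$, the metric $h_\omega^{\otimes(m-1)}e^{-(m-1)\Psi}$ from Theorem~\ref{thm:metric}, the bound $|s|^2_{h_\omega^{m-1}}e^{-(m-1)\Psi_0}\le C|s|^{2/m}$ coming from the minimal singularities of $\Psi_0$, and Theorem~\ref{thm: ext}; your direct comparison with the $\theta_0$-psh function $\frac1m\log|s|^2_{h_\omega^m}$ is in fact cleaner than the multiplier-ideal phrasing in the paper.

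The only loose end is your ``globalizing'' paragraph, which does not by itself produce $\hat s$ on all of $X$. Like the paper's proof, you only extend after shrinking $\Delta$. The global extension does follow from the constancy of $P_m$ that you establish: Grauert's theorem then makes $p_*\mathcal O_X(mK_X)$ locally free with base-change isomorphisms, and on the Stein disk its global sections surject onto $H^0(X_0,mK_{X_0})$.
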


	\begin{proof} 
		As explained in the introduction we may assume $K_{X_t}$ is pseudoeffective for every $t\in\Delta$, cf. \cite{CP25}.
		Let $\theta_{m-1}:=(m-1)\theta\in c_1((m-1)K_X)$ be the curvature form associated to the metric $h^{m-1}_\omega$ induced from a K\"ahler metric $\omega$ on $X$. Now $i\partial\bar\partial \left(\frac{m-1}{m}\log|s|^2\right)=\theta_{m-1}|_{X_0}+i\partial\bar\partial\varphi$ is a closed positive current on $X_0$ such that  $\varphi$ has analytic singularities. From Theorem~\ref{thm:metric}, there is an $\theta_{m-1}$-psh function $\Phi_{m-1}=(m-1)\Phi$ on $X$ after shrinking $\Delta$, such that $\mathcal I(\varphi)\subset \mathcal I(\Phi_{m-1}|_{X_0})$. Write $mK_X=K_X+(m-1)K_X:=K_X+L$.   Equipped with $L$ the metric $h^{m-1}_\omega e^{-\Phi_{m-1}}$. Since $X$ is a weakly pseudoconvex K\"ahler manifold, from the general $L^2$ extension Theorem \ref{thm: ext}, we get an extension $\hat s\in H^0(X, mK_{X})$ of $s\in H^0(X_0, mK_{X_0})$. 
		
		\noindent This completes the proof of the main theorem.
	\end{proof}
	
	\noindent\textbf{AI Disclosure.}
	After getting our idea about the connection with bimeromorphic model, ChatGPT suggested Hacon-Xie's paper \cite{HX}, which is what we want.

\end{document}